\tikzset{anchorbase/.style={baseline={([yshift=-0.5ex]current bounding box.center)}}}
\tikzstyle directed=[postaction={decorate,decoration={markings,
    mark=at position #1 with {\arrow{>}}}}]
\tikzstyle rdirected=[postaction={decorate,decoration={markings,
    mark=at position #1 with {\arrow{<}}}}]
 \newlength{\baseunit}               
\newtheorem{theorem}[subsubsection]{Theorem}
\newtheorem{lemma}[theorem]{Lemma}
\newtheorem{prop}[theorem]{Proposition}
\newtheorem{corollary}[subsubsection]{Corollary}
\theoremstyle{definition}
\newtheorem{definition}[subsubsection]{Definition}
\newtheorem{remark}[theorem]{Remark}
\newtheorem{example}[subsubsection]{Example}
\newtheorem{question}[theorem]{Question}
\newtheorem{analogy}[theorem]{Analogy}
\newtheorem{thmA}{Theorem}
\newcommand{\PSh}{\mathrm{PSh}}
\newcommand{\Sh}{\mathrm{Sh}}
\newcommand{\normalt}{t}
\newcommand{\cJ}{\mathcal{J}}
\newcommand{\cK}{\mathcal{K}}
\newcommand{\cT}{\mathcal{T}}
\newcommand{\Co}{\mathcal{C}o}
\newcommand{\ev}{\mathrm{ev}}
\newcommand{\co}{\mathrm{co}}
\newcommand{\bA}{\mathbf{A}}
\newcommand{\bB}{\mathbf{B}}
\newcommand{\bC}{\mathbf{C}}
\newcommand{\bD}{\mathbf{D}}
\newcommand{\bT}{\mathbf{T}}
\newcommand{\bV}{\mathbf{V}}
\newcommand{\bU}{\mathbf{U}}
\newcommand{\svecc}{\mathsf{svec}}
\newcommand{\cE}{\mathcal{E}}
\newcommand{\charr}{\mathrm{char}}
\newcommand{\Set}{\mathsf{Set}}
\newcommand{\Ab}{\mathsf{Ab}}
\newcommand{\Mod}{\mathsf{Mod}}
\newcommand{\Rep}{\mathsf{Rep}}
\newcommand{\Tens}{\mathsf{Tens}}
\newcommand{\GL}{\mathrm{GL}}
\newcommand{\SG}{\mathrm{S}}
\newcommand{\OO}{\mathrm{O}}
\newcommand{\OSp}{\mathrm{OSp}}
\newcommand{\unit}{{\mathbbm{1}}}
\newcommand{\tto}{\twoheadrightarrow}
\newcommand{\mN}{\mathbb{N}}
\newcommand{\mZ}{\mathbb{Z}}
\newcommand{\mC}{\mathbb{C}}
\newcommand{\End}{\mathrm{End}}
\newcommand{\Ob}{\mathrm{Ob}}
\newcommand{\Ext}{\mathrm{Ext}}
\newcommand{\Hom}{\mathrm{Hom}}
\newcommand{\id}{\mathrm{id}}
\newcommand{\Nat}{\mathrm{Nat}}
\newcommand{\St}{\mathrm{St}}
\newcommand{\op}{\mathrm{op}}
\newcommand{\Ind}{\mathrm{Ind}}
\newcommand{\Vecc}{\mathsf{Vec}}
\newcommand{\Tilt}{\mathsf{Tilt}}
\begin{document}
\title[Monoidal abelian envelopes]{Monoidal abelian envelopes}
\author{Kevin Coulembier}

\subjclass[2010]{18D10, 18D15, 18F20, 18F10, 20G05}

\keywords{Tensor category, universal monoidal category, abelian envelope, tilting module}

\thanks{School of Mathematics and Statistics, University of Sydney, NSW 2006, Australia\\
kevin.coulembier@sydney.edu.au\hspace{5mm} 0000-0003-0996-3965}

\begin{abstract}
We prove a constructive existence theorem for abelian envelopes of non-abelian monoidal categories. This establishes a new tool for the construction of tensor categories. As an example we obtain new proofs for the existence of several universal tensor categories as conjectured by Deligne. Another example constructs interesting tensor categories in positive characteristic via tilting modules for $SL_2$.
\end{abstract}

\maketitle


\section*{Introduction}
Fix a field $k$.
A $k$-linear symmetric rigid monoidal karoubian category in which the endomorphisms of the tensor identity $\unit$ constitute $k$ will be called a `pseudo-tensor category'. When the category is abelian, it is called a `tensor category', following~\cite{Del90}. The canonical example of the latter is the category of algebraic representations of an affine group scheme over $k$. It is often easy to construct specific examples of {\em pseudo}-tensor categories, for instance diagrammatically or via generators and relations. On the other hand, constructing tensor categories with certain requested properties is typically more challenging. In many recent constructions of important new tensor categories, see \cite{BE, CO, CEH, Deligne, EHS}, the desired tensor categories happen to be `abelian envelopes' of straightforward pseudo-tensor categories. We review these examples below via applications of our main result.


A tensor category is the abelian envelope of a pseudo-tensor subcategory if every faithful tensor functor from the subcategory to a tensor category lifts to an exact tensor functor out of the original category. Not every pseudo-tensor category admits an abelian envelope. A classical example is given in \cite[\S 5.8]{Deligne} and we will give an example of a different nature below. A powerful `recognition theorem' for abelian envelopes was obtained in \cite{EHS}. However, the construction of abelian envelopes in \cite{BE, CO, EHS} drew from a rich variety of different methods, rather than some standard approach, and moreover at present there is no `existence theorem' in the literature for abelian envelopes.

The latter is precisely the aim of the current paper. We derive sufficient internal conditions on a pseudo-tensor category for its abelian envelope to exist, along with a unifying construction of the envelope. We apply this to recover old and construct new abelian envelopes. 
To state our main theorem, we call an object $X$ with dual $X^\vee$ in a pseudo-tensor category $\bD$ `strongly faithful' if the evaluation $X^\vee\otimes X\to \unit$ is the coequaliser of the two evaluation morphisms $X^\vee\otimes X\otimes X^\vee\otimes X\rightrightarrows X^\vee\otimes X$. We show that this is equivalent to the property that $X\otimes-:\bD\to\bD$ reflects all kernels and cokernels in $\bD$.

\begin{thmA}\label{ThmA}
If for every morphism $f$ in $\bD$ there exists a strongly faithful $X\in\bD$ for which $X\otimes f$ is split, then  $\bD$ admits an abelian envelope $\bT$. Moreover, the ind-completion $\Ind\bT$ is tensor equivalent to the category
$\Sh\bD$ of all presheaves $\bD^{\op}\to\Vecc_k$ which send the sequences
$$D\otimes X^\vee\otimes X\otimes X^\vee\otimes X\to D\otimes X^\vee\otimes X\to D\to 0,$$
for all $D\in\bD$ and strongly faithful  $X\in\bD$, to exact sequences in $\Vecc_k$.
\end{thmA}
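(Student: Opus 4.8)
The strategy is to realize $\Sh\bD$ as a well-behaved Grothendieck abelian category with a monoidal structure, identify a natural candidate for $\bT$ inside it, and then verify the two defining properties (faithful tensor functors lifting to exact tensor functors, and the ind-completion matching $\Sh\bD$). First I would set up $\Sh\bD$ as the category of sheaves on $\bD$ for the Grothendieck topology $J$ generated by the covering sieves associated to the maps $D\otimes X^\vee\otimes X\to D$ for strongly faithful $X$; equivalently, $\Sh\bD$ is the full subcategory of $\PSh(\bD):=\Fun(\bD^{\op},\Vecc_k)$ of presheaves killing the displayed two-step complexes. The strong faithfulness hypothesis — recast via the stated equivalence as "$X\otimes-$ reflects kernels and cokernels" — is exactly what makes these sieves a reasonable topology and makes the sheafification functor $\PSh\bD\to\Sh\bD$ exact; I would check $\Sh\bD$ is a Grothendieck abelian category with the Yoneda-type objects (sheafifications of representables) as generators. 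The monoidal structure comes by Day convolution from $\otimes$ on $\bD$, descended to $\Sh\bD$; here one uses that $\otimes$ on $\bD$ is exact in each variable (rigidity) and that the covering families are stable under $-\otimes Y$, so that Day convolution of two sheaves is again a sheaf, giving $\Sh\bD$ a closed symmetric monoidal structure with the sheafified unit representable $\underline{\unit}$ still having $\End=k$.

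Next I would produce $\bT$ as the full subcategory of $\Sh\bD$ on the objects of finite length, or more robustly as the subcategory of "finitely presented" objects, and show $\bD\to\Sh\bD$ (Yoneda followed by sheafification) lands in $\bT$, is faithful (this is where strong faithfulness of enough $X$ enters: the hypothesis that every $f$ becomes split after tensoring with some strongly faithful $X$ forces the sheafification map to be injective on Hom-spaces, indeed it forces $\bD\to\Sh\bD$ to be fully faithful on a suitable localization), and that $\bT$ is an abelian rigid monoidal category in which $\End(\unit)=k$, i.e. a tensor category. The key structural input is that the "split after $\otimes X$" condition lets one compute kernels and cokernels in $\Sh\bD$ of maps coming from $\bD$ very explicitly — a cokernel of $\underline{f}$ is detected by tensoring with the witnessing $X$, where $f$ becomes split so its cokernel is again (the sheafification of) an object of $\bD$ — so that $\bT$ is closed under subquotients in $\Sh\bD$ and every object of $\bT$ is a subquotient of an object of $\bD$. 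This simultaneously gives $\Ind\bT\simeq\Sh\bD$: $\Sh\bD$ is locally finitely presented with $\bT$ as its subcategory of finitely presented objects, because every sheaf is a filtered colimit of sheafified representables and these lie in $\bT$.

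The universal property is the final step. Given a faithful tensor functor $F\colon\bD\to\cC$ into a tensor category, I would first extend $F$ to $\PSh\bD\to\Ind\cC$ by left Kan extension along Yoneda (this is the unique cocontinuous monoidal extension), then argue it factors through $\Sh\bD$: one must check $F$ sends the displayed complexes to exact sequences, which follows because $F$ is faithful and exact-on-$\bD$, hence preserves the coequalizer presentation of $\ev\colon X^\vee\otimes X\to\unit$ that defines strong faithfulness — faithfulness of $F$ plus rigidity of $\cC$ means $F(X)$ is again strongly faithful in $\cC$, so $F$ respects the cokernel presentations. The induced $\Sh\bD\to\Ind\cC$ is cocontinuous and monoidal; restricting to finitely presented objects gives the desired exact tensor functor $\bT\to\cC$ (it lands in $\cC$ because it sends the generators, sheafified representables, into $\cC$, and $\bT$ is generated by these under finite (co)limits which $\cC$ has). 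Exactness on $\bT$ follows from cocontinuity plus the explicit computation of finite colimits in $\Sh\bD$, and kernels/left-exactness from the dual rigidity argument (apply the functor to the dualized complex). Uniqueness of the lift is forced by cocontinuity and the density of sheafified representables.

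\medskip

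\noindent\textbf{Main obstacle.} The hard part, I expect, is establishing that sheafification $\PSh\bD\to\Sh\bD$ is \emph{exact} and that the Day convolution of sheaves is a sheaf — i.e. that the Grothendieck topology cut out by the strongly-faithful objects is compatible with the tensor product — together with the faithfulness of $\bD\to\bT$. These are precisely the places where the hypothesis "every $f$ splits after $\otimes X$ for some strongly faithful $X$" must be used in full strength rather than just as an abstract condition: it is what guarantees that the covering sieves have enough internal structure (a local splitting) for the associated sheaf functor to be computed in one step and to preserve finite limits, and it is what prevents the sheafification from collapsing Hom-spaces. Controlling rigidity (existence of duals) in $\bT$ — as opposed to merely in $\Ind\bT=\Sh\bD$ — also requires care, since duals of sheafified representables are again sheafified representables but one must check the evaluation/coevaluation descend correctly; I would handle this by transporting the rigid structure of $\bD$ through the fully faithful embedding and checking compatibility on generators.
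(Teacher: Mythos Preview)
Your overall architecture matches the paper's: realize $\Sh\bD$ as sheaves for the Grothendieck topology generated by the morphisms $D\otimes\ev_X$, descend Day convolution, locate $\bT$ inside, and verify universality by Kan extension. Two points deserve correction or sharpening.

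First, Day convolution of two sheaves need not be a sheaf; the paper sets $F\otimes G:=S(F\star G)$ and obtains the closed monoidal structure on $\Sh\bD$ from Day's reflection theorem, after checking that $F(-\otimes Z)\in\Sh\bD$ whenever $F\in\Sh\bD$. This is a minor slip in your sketch.

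The substantive gap is in your treatment of rigidity of $\bT$. You take $\bT$ to be the finitely presented (or finite-length) objects and propose to ``transport the rigid structure of $\bD$ through the fully faithful embedding''---but that only dualises objects of $\bD$, not cokernels of morphisms in $\bD$, which is what a general compact object of $\Sh\bD$ looks like. The paper instead defines $\bT$ as the subcategory of \emph{rigid} objects of $\Sh\bD$ (so the tensor-category axioms for $\bT$ are automatic) and reduces everything to showing $\Sh\bD\simeq\Ind\bT$. For that it invokes a criterion from the companion paper \cite{CP}: a Grothendieck-tensor category is the ind-completion of a tensor category iff $X\otimes-$ is exact for every compact $X$. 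The splitting hypothesis is used precisely and only here: a compact $X\in\Sh\bD$ is the cokernel of some $Y_0(f)$ with $f:B\to A$ in $\bD$; choosing strongly faithful $M\in\bD$ with $M\otimes f$ split makes $M\otimes X$ a direct summand of $M\otimes A$, so $M\otimes X\otimes-$ is exact, and since $M\otimes-$ is faithful and exact this forces $X\otimes-$ exact. Your sketch circles this idea (``a cokernel of $\underline{f}$ is detected by tensoring with the witnessing $X$'') but does not isolate it as the crux, and without it rigidity of $\bT$ beyond the image of $\bD$ is not established.
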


A slightly more general version of this is proved in Theorem~\ref{Thm}. In the following sense Theorem~\ref{ThmA} cannot be improved. In Lemma~\ref{FunnyEx} we provide a category $\bD$ where all assumptions are satisfied but with `strongly faithful' replaced by the weaker `faithful' in ordinary sense ($X\otimes -$ is faithful) and which does not admit an abelian envelope. We also demonstrate that the recognition theorem from \cite{EHS} can be derived from Theorem~\ref{ThmA}. In particular, Theorem~\ref{ThmA} gives an explicit construction of the abelian envelope in all cases where one might apply said recognition theorem.

Note that, under the assumptions in Theorem~\ref{ThmA}, one can prove that every non-zero object in $\bD$ is strongly faithful, in particular the definition of $\Sh\bD$ can then be adjusted. However,  we demonstrate that $\Sh\bD$ as defined above is always (without the splitting condition in Theorem~\ref{ThmA}) the category of sheaves with respect to some $k$-linear Grothendieck topology on $\bD$. This shows that $\Sh\bD$ is always a symmetric closed monoidal Grothendieck category. We also observe that whenever $\Sh\bD$ is the ind-completion of some tensor category, the latter must be the abelian envelope of $\bD$. Moreover, we determine an intrinsic criterion for when $\Sh\bD$ is the ind-completion of a tensor category.

Simultaneously and independently, Benson, Etingof and Ostrik have obtained related results in \cite{BEO}. On the one hand, the scope {\it loc. cit.} is more general in the sense that it does not require braidings and it also considers some analogue of envelopes in which the subcategory is not full. On the other hand, \cite{BEO} is restricted to tensor categories which have enough projective objects, which for instance does not include the ones in \cite{EHS}.

\subsection*{Application I: Deligne's universal monoidal categories} Let $k$ be a field of characteristic $0$.
In \cite{Deligne}, Deligne introduced three 1-parameter families of universal pseudo-tensor categories $[\SG_t,k]$, $[\GL_t,k]$ and $[\OO_t,k]$, for $t\in k$, and embedded them into tensor categories. He also formulated conjectures about the universality of the latter. As observed in \cite{CO, EHS}, the 
conjectures can be reformulated, via the tannakian formalism of \cite{Del90}, into the existence of abelian envelopes.

These conjectures were proved for $[\SG_t,k]$ in \cite{CO} and for $[\GL_t,k]$ in \cite{EHS}. In \cite{CO} the envelope is constructed via a suitable t-structure on the homotopy category $K^b([\SG_t,k])$ and in \cite{EHS} the envelope of $[\GL_t,k]$ is realised as a suitable limit of truncations of representation categories of general linear supergroups of growing rank.

Since Theorem~\ref{ThmA} applies to $[\SG_t,k]$, $[\GL_t,k]$ and $[\OO_t,k]$, it gives a new and unifying proof and construction of all the abelian envelopes, so of all corresponding universal tensor categories. Moreover, we do not require $\bar{k}=k$, contrary to \cite{EHS}. The construction of the abelian envelope of $[\OO_t,k]$ is new, although one would expect that the methods from \cite{EHS} can be extended to this case.

Yet another construction of the abelian envelope of $[\GL_t,\mC]$, described in \cite{Harman}, realises it inside an ultraproduct $\prod_{\mathcal{U}}[\GL_{t_i},\overline{\mathbb{F}}_{p_i}]$. However, recognising the tensor category inside the product as the abelian envelope requires the knowledge of the existence of the latter (as proved first in \cite{EHS}).

\subsection*{Application II: Tensor categories in positive characteristic}
The structure theory of tensor categories over fields of positive characteristic is in full development, see for instance \cite{BE, BEO, Tann, EO, EG, Ostrik}. An important tool developed in \cite{Tann, EO, Ostrik} is the `Frobenius twist' in arbitrary tensor categories. In \cite{BE} a family of tensor categories in characteristic~2 was constructed in which this functor is {\em not} exact. One way to interpret those categories, is as the abelian envelopes of the monoidal quotients of the pseudo-tensor category $\Tilt SL_2$ of tilting modules of the reductive group $SL_2$. We will show that these quotients also admit abelian envelopes when $p>2$ by application of Theorem~\ref{ThmA}.

These envelopes are also constructed independently in \cite{BEO}, and studied in full detail there. In particular, they provide the first examples of tensor categories for $p>2$ on which the Frobenius twist is not exact.

\subsection*{Structure of the paper}
In Section~\ref{Prel} we recall the necessary background. In Section~\ref{SecSplit} we introduce and study the notions of strongly faithful objects and monoidal splitting of morphisms. As an application, we show that the conditions in Theorem~\ref{ThmA} are satisfied for $[\GL_t,k]$ and $[\OO_t,k]$. In Section~\ref{SecSh} we study the category $\Sh\bD$. In Section~\ref{SecMain} we then apply all the above to prove Theorem~\ref{ThmA} and apply it to the above examples.

In Appendix~\ref{AppTop} we recall the notions of Grothendieck topologies and sheaves on $k$-linear sites. An alternative approach to the methods in Section~\ref{SecSh} would be to argue that our set-up allows to apply a general theory developed in \cite{Sch2} by Sch\"appi. Since our case is rather specific, it is more transparent to use a direct approach, but in order to highlight this connection we also recall some results from \cite{Sch2} in Appendix~\ref{AppTop}.

\section{Preliminaries}\label{Prel} 
We set $\mN=\{0,1,2,\ldots\}$. Throughout the paper we let $k$ denote an arbitrary field, unless further specified.

\subsection{Exactness and split morphisms}
Let $\bA$ be a preadditive category.
\subsubsection{} We denote by $\Xi=\Xi(\bA)$, the class of all exact sequenes
\begin{equation}\label{rexeq}X_2\stackrel{p}{\to} X_1\stackrel{q}{\to} X_0\to 0\end{equation}
in $\bA$. That is, all sequences \eqref{rexeq} where $q$ is the cokernel of $p$, which is equivalent to
$$0\to \bA(X_0,A)\xrightarrow{-\circ q} \bA(X_1,A)\xrightarrow{-\circ p} \bA(X_2,A)$$
being exact in $\Ab$ for each $A\in\bA$. 

\subsubsection{} A morphism $f:X\to Y$ in $\bA$ is {\bf split} if there exists $g:Y\to X$ such that $f\circ g\circ f=f$. Note that this implies that $f\circ g$ and $g\circ f$ are idempotents. If $\bA$ is Karoubi (idempotent complete) it thus follows that $f$ is split if and only if we have $X\simeq A\oplus X_0$ and $Y\simeq A\oplus Y_0$ and $f$ is the composition of these isomorphisms with $(\id_A,0)$.

\subsection{Symmetric monoidal categories}
Let $K$ be a commutative ring.
\subsubsection{} By a $K$-linear symmetric monoidal category $(\bC,\otimes, \unit,\sigma)$, we mean a monoidal category $(\bC,\otimes,\unit)$ with a symmetric braiding $\sigma$ with a fixed $K$-linear structure on $\bC$ for which $-\otimes -$ is $K$-linear in each variable. As is customary, we suppress the associativity constraints and unitors from all notation. Correspondingly we do not place brackets in iterated tensor products. Furthermore, in order to keep long expressions legible, the functor $X\otimes-$, for $X\in\bC$, will sometimes be shortened to $X-$. So we might write $XY$ or $Xf$ for an object $Y$ or morphism $f$ in $\bC$.

\subsubsection{} A {\bf tensor functor} between two $K$-linear symmetric monoidal categories is a $K$-linear symmetric monoidal functor. Usually we will denote the tensor functor simply by the underlying functor.  For two $K$-linear symmetric monoidal categories $(\bC,\otimes, \unit,\sigma)$ and $(\bC',\otimes', \unit',\sigma')$, we denote by $\Tens(\bC,\bC')$ the category of tensor functors $\bC\to\bC'$. 
A
{\bf tensor equivalence} is a tensor functor which is also an equivalence.

\subsubsection{} For a $K$-linear symmetric monoidal category $(\bC,\otimes, \unit,\sigma)$ and $X\in\bC$, a dual of $X$ is a triple $(X^\vee,\ev_X,\co_X)$ of an object $X^\vee\in\bC$ and morphisms $\ev_X:X^\vee\otimes X\to\unit$ and $\co_X:\unit\to X\otimes X^\vee$, such that
\begin{equation}\label{snake}\id_X=(X\otimes \ev_X)\circ (\co_X\otimes X)\quad\mbox{and}\quad \id_{X^\vee}=(\ev_X\otimes X^\vee)\circ (X^\vee\otimes \co_X).\end{equation}
An object which admits a dual is called {\bf rigid}. If every object in $\bC$ admits a dual, then $\bC$ is called rigid.
The dimension $\dim(X)\in\End(\unit)$ of a rigid object is given by $\ev_X\circ \sigma_{XX^\vee}\circ \co_X$.

\subsubsection{}
A tensor ideal $\cJ$ in a $K$-linear symmetric monoidal category $(\bC,\otimes, \unit,\sigma)$ is an assignment of $K$-submodules $\cJ(X,Y)\subset\bC(X,Y)$ for each $X,Y\in\bC$ such that the corresponding class of morphisms is closed under composing or taking the tensor product with any morphism in $\bC$. For a tensor ideal $\cJ$, the quotient category $\bC/\cJ$ has by definition the same objects as $\bC$ and as morphism sets the quotient $K$-modules $\bC(X,Y)/\cJ(X,Y)$. By construction, $\bC/\cJ$ is again $K$-linear symmetric monoidal, such that $\bC\to\bC/\cJ$ is a tensor functor. We can therefore alternatively define tensor ideals as the kernels of tensor functors.

\subsection{Pseudo-tensor categories}
Let $k$ be an arbitrary field.
\subsubsection{}\label{DefPseudo} A $k$-linear symmetric monoidal category $(\bD,\otimes,\unit,\sigma)$ is a {\bf pseudo-tensor category over $k$} if
\begin{enumerate}[label=(\roman*)]
\item $\bD$ is essentially small;
\item $k\to\End(\unit)$ is an isomorphism;
\item $(\bD,\otimes,\unit,\sigma)$ is rigid;
\item $\bD$ is pseudo-abelian (additive and Karoubi).
\end{enumerate}

A pseudo-tensor subcategory of such $\bD$ is a full monoidal subcategory closed under taking duals, direct sums and summands. It is thus again a pseudo-tensor category. The quotient of a pseudo-tensor category with respect to a non-trivial tensor ideal is again pseudo-tensor.

Occasionally we will encounter categories as above except that the field $k$ is replaced by some commutative ring $R$. We will use the same terminology `tensor category over $R$'.

If only (i)-(iii) are satisfied, we can take the pseudo-abelian envelope, see \cite[\S 1.2]{AK}, by formally adjoining direct sums and summands, to obtain a pseudo-tensor category.

\begin{remark}\label{RemSigma}
Let $\bD$ be a pseudo-tensor category and $\xi\in\Xi(\bD)$. For any $A\in\bD$, the sequence $A\otimes \xi$ is still exact, so $A\otimes\xi \in \Xi$, since $A\otimes-$ has a right adjoint $A^\vee\otimes-$.
\end{remark}

\subsubsection{}Following  \cite{Del90, Del02}, a {\bf tensor category over $k$} is a pseudo-tensor category which is abelian (i.e. assumption \ref{DefPseudo}(iv) is strengthened). In such a category, $\unit$ is automatically a simple object.
Following \cite{CEH, EHS}, we use the following terminology.

\begin{definition}\label{DefAbEnv}
For a pseudo-tensor category $\bD$ over $k$, a pair $(F,\bT)$ of a tensor category $\bT$ over $k$ and a faithful tensor functor $F:\bD\to\bT$ constitute an {\bf abelian envelope} of $\bD$ if for each tensor category $\bT_1/k$, composition with $F$ induces an equivalence 
$$\Tens^{ex}(\bT,\bT_1)\;\simeq\; \Tens^{faith}(\bD,\bT_1)$$
between the categories of exact (resp. faithful) tensor functors.
\end{definition}

We will indulge in the usual abuse of terminology, by referring to the tensor category $\bT$ of a pair $(F,\bT)$ as in \ref{DefAbEnv} as `{\em the} abelian envelope of $\bD$'. The use of the definite article is justified by obvious uniqueness up to equivalence.

\begin{remark}\label{RemEF}

By \cite[Corollaire 2.10(ii)]{Del90} functors in $\Tens^{ex}(\bT,\bT_1)$ are automatically faithful, so composition with $F$ in Definition~\ref{DefAbEnv}, automatically lands in $\Tens^{faith}(\bD,\bT_1)$. Furthermore, \cite[Corollaire 2.10(i)]{Del90} shows that right exact functors in $\Tens(\bT,\bT_1)$ are automatically in $\Tens^{ex}(\bT,\bT_1)$.
\end{remark}

\subsubsection{}
 For a tensor category $\bT$, the ind-completion $\Ind\bT$ is canonically an abelian symmetric monoidal category such that $-\otimes-$ is exact (and cocontinuous) in each variable, see \cite[\S 7]{Del90}. Since $\bT$ is assumed to be essentially small, we can define $\Ind\bT$ also as the category of left exact functors $\bT^{\op}\to\Vecc$. The following lemma is a special case of a general result in \cite{CP}, but we prove it by a direct generalisation of the argument in \cite[\S 2.2]{Del02} for tensor categories with all objects of finite length.
\begin{lemma}\label{LemInd}
The subcategory of rigid objects in $\Ind\bT$ is equivalent to $\bT$.
\end{lemma}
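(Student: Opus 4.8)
The plan is to prove two inclusions between full subcategories of $\Ind\bT$. For the easy one, recall that the canonical functor $\bT\to\Ind\bT$ is a fully faithful tensor functor; being monoidal, it carries the duality data $(X^\vee,\ev_X,\co_X)$ of an object $X\in\bT$ to duality data in $\Ind\bT$ (the snake identities \eqref{snake} are preserved). Hence every object of $\bT$ is rigid in $\Ind\bT$, and it remains to show the converse: a rigid object $X\in\Ind\bT$ is isomorphic to an object of $\bT$.

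For the converse I would first record the two facts that make the argument run. Since $\Ind\bT$ is the category of left exact functors $\bT^{\op}\to\Vecc$, every object $Y\in\bT$ is a compact object of $\Ind\bT$, i.e. $\Hom_{\Ind\bT}(Y,-)$ commutes with filtered colimits; in particular $\unit$ is compact. Secondly, $-\otimes-$ on $\Ind\bT$ is cocontinuous in each variable, and every object of $\Ind\bT$ is a filtered colimit of objects of $\bT$ mapping to it.

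Now let $X$ be rigid with chosen dual $(X^\vee,\ev_X,\co_X)$, and write $X=\colim_{i\in I}X_i$ as a filtered colimit with $X_i\in\bT$ and structure maps $\iota_i\colon X_i\to X$. Cocontinuity of $-\otimes X^\vee$ gives $X\otimes X^\vee=\colim_i(X_i\otimes X^\vee)$, and compactness of $\unit$ lets us factor the coevaluation as $\unit\xrightarrow{\,c\,}X_{i_0}\otimes X^\vee\xrightarrow{\iota_{i_0}\otimes X^\vee}X\otimes X^\vee$ for some $i_0\in I$. Put $g:=(X_{i_0}\otimes\ev_X)\circ(c\otimes X)\colon X\to X_{i_0}$. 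Using bifunctoriality of $\otimes$ to slide $\iota_{i_0}$ past $\ev_X$, and the relation $\co_X=(\iota_{i_0}\otimes X^\vee)\circ c$, one finds $\iota_{i_0}\circ g=(X\otimes\ev_X)\circ(\co_X\otimes X)=\id_X$ by the snake identity \eqref{snake}. Thus $X$ is a retract of $X_{i_0}\in\bT$, equivalently the image of the idempotent $e:=g\circ\iota_{i_0}\in\End_{\Ind\bT}(X_{i_0})=\End_{\bT}(X_{i_0})$. Since $\bT$ is Karoubi, $e$ splits in $\bT$, and as the exact embedding $\bT\to\Ind\bT$ preserves this splitting, $X$ is isomorphic to an object of $\bT$.

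The only real subtlety — the main obstacle — is the compactness bookkeeping: justifying that $\unit$ is compact in $\Ind\bT$ and that $\otimes$ is cocontinuous, so that the coevaluation can be compressed through a single $X_{i_0}\in\bT$. Once that is in place, the rest is the snake identity together with idempotent splitting in the Karoubian category $\bT$; this is exactly the argument of \cite[\S 2.2]{Del02}, reorganised so as not to use the finite-length hypothesis made there.
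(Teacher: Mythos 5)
Your proof is correct and follows essentially the same route as the paper's: write $X=\varinjlim X_i$, use compactness of $\unit$ together with cocontinuity of $\otimes$ to compress $\co_X$ through some $X_{i_0}\otimes X^\vee$, apply the snake identity to exhibit $\id_X$ as a composite $X\to X_{i_0}\to X$, and conclude by splitting the resulting idempotent in the Karoubian category $\bT$. The only cosmetic difference is that you spell out the idempotent-splitting step explicitly, whereas the paper simply says $X$ is a direct summand of $X_{i_0}$ and hence lies in $\bT$.
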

\begin{proof}
Consider $X\in\Ind\bT$, which is a filtered colimit $\varinjlim_i X_i$ with $X_i\in \bT$, and label the defining morphisms as $a_i: X_i\to X$. 
If $X$ has a dual $X^\vee$, then the facts that $X\otimes X^\vee\simeq \varinjlim_i (X_i\otimes X^\vee)$ and that $\unit$ is compact imply that $\co_X$ can be written as a composition of some morphism $f:\unit\to X_i\otimes X^\vee$ and $a_i\otimes X^\vee$ for some $i$. Consequently we obtain a commutative diagram
$$\xymatrix{
X\ar[r]^-{\co_X X}\ar[rd]_-{f X}& X\otimes X^\vee\otimes X\ar[r]^-{X \ev_X}& X\\
&X_i\otimes X^\vee\otimes X\ar[r]^-{X_i \ev_X}\ar[u]_-{a_iX^\vee X}& X_i\ar[u]_-{a_i}.
}$$
By~\eqref{snake}, we can thus write $\id_X$ as a composition $X\to X_i\to X$. So $X$ is a direct summand of $X_i\in\bT$ and therefore isomorphic to an object in $\bT$.
\end{proof}

The following lemma is straightforward, but it will be useful to have it spelled out.
\begin{lemma}\label{LemTriv}
Consider a pseudo-tensor category $\bD$, with pseudo-tensor subcategory $\bD_0\subset\bD$ and $X\in \bD_0$. The full subcategory $\bD_1$ of objects $V\in\bD$ for which $V\otimes X\in\bD_0$ is a pseudo-tensor subcategory of $\bD$.
\end{lemma}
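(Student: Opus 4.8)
The plan is to verify directly that $\bD_1$ meets the three conditions defining a pseudo-tensor subcategory as recalled in~\ref{DefPseudo}: it should be a full monoidal subcategory of $\bD$ closed under duals, direct sums and summands. Fullness is built into the definition of $\bD_1$, and it is closed under isomorphism since $\bD_0$ is (being closed under summands). The workhorse of the argument is the elementary observation that, for \emph{every} object $Y\in\bD$, the object $Y\otimes X$ is a direct summand of $Y\otimes X\otimes X^\vee\otimes X$: by the snake identity~\eqref{snake}, the composite
$$Y\otimes X\xrightarrow{\ Y\co_X X\ }Y\otimes X\otimes X^\vee\otimes X\xrightarrow{\ YX\ev_X\ }Y\otimes X$$
is the identity, so $Y\co_X X$ is a split monomorphism; as $\bD$ is Karoubi, the image of the associated idempotent splits off as a genuine direct summand isomorphic to $Y\otimes X$. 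Combined with the symmetric braiding, which lets us freely permute tensor factors, this reduces every required closure property of $\bD_1$ to the corresponding closure property of $\bD_0$.

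Concretely, I would proceed as follows. First, $\unit\in\bD_1$ because $\unit\otimes X\cong X\in\bD_0$, and likewise $0\in\bD_1$. Closure under direct sums and summands is then immediate from $(V\oplus W)\otimes X\cong(V\otimes X)\oplus(W\otimes X)$ together with the fact that $\bD_0$ is closed under direct sums and summands in $\bD$. For closure under $\otimes$, given $V,W\in\bD_1$, the observation above exhibits $(V\otimes W)\otimes X$ as a direct summand of $V\otimes W\otimes X\otimes X^\vee\otimes X$, which by the braiding is isomorphic to $(V\otimes X)\otimes X^\vee\otimes(W\otimes X)$; since $V\otimes X,\,W\otimes X\in\bD_0$ and $X^\vee\in\bD_0$ (as $X\in\bD_0$ and $\bD_0$ is closed under duals), this object lies in $\bD_0$, hence so does its summand $(V\otimes W)\otimes X$, giving $V\otimes W\in\bD_1$. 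Finally, for closure under duals, given $V\in\bD_1$ we have $(V\otimes X)^\vee=X^\vee\otimes V^\vee\in\bD_0$; then $V^\vee\otimes X$ is a direct summand of $V^\vee\otimes X\otimes X^\vee\otimes X\cong(X^\vee\otimes V^\vee)\otimes X\otimes X$, which lies in $\bD_0$, so $V^\vee\otimes X\in\bD_0$ and $V^\vee\in\bD_1$.

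I do not anticipate any genuine obstacle; the statement is as "straightforward" as advertised. The only point needing a little care is bookkeeping the symmetric-monoidal rearrangements of tensor factors, which I would either leave implicit by appealing to coherence for symmetric monoidal categories or, if one wants to be scrupulous, spell out as the relevant composites of braiding isomorphisms. It is worth noting in passing that $\bD_1$ thereby inherits an additive Karoubian structure from $\bD$, so that, as for any pseudo-tensor subcategory, it is again a pseudo-tensor category in its own right.
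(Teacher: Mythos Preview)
Your proof is correct and follows essentially the same approach as the paper's: both hinge on the observation that $X$ is a direct summand of $X\otimes X^\vee\otimes X$ (via the snake identity) together with closure of $\bD_0$ under summands, duals and tensor products, and the symmetric braiding to rearrange factors. Your argument is in fact slightly more explicit, particularly in spelling out the snake-identity splitting and the dual case, but the content is the same.
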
 
\begin{proof}
That $\bD_1$ is closed under taking direct sums and summands follows from the corresponding property of $\bD_0$. Clearly $\unit\in\bD_1$. Now if $V,W\in\bD_1$, then by definition 
$$V\otimes X\otimes W\otimes X\in\bD_0\quad\Rightarrow \quad V\otimes W\otimes (X\otimes X^\vee\otimes X)\in\bD_0.$$
Since $X$ is a direct summand of $X\otimes X^\vee\otimes X$, it follows that $V\otimes W\otimes X$ is a direct summand of an object in $\bD_0$ and hence also in $\bD_0$. In conclusion $V\otimes W\in\bD_1$. That $\bD_1$ is closed under taking duals follows similarly.
\end{proof}

\subsubsection{} Consider a pseudo-tensor category $\bD$ over $k$ and a field extension $K/k$. The naive extension of scalars of $\bD$, see \cite[5.1.1]{AK}, is the $K$-linear category with same objects as $\bD$, but with morphism sets given by $K\otimes_k\bD(-,-)$.
We define $\bD_K$ as the Karoubi envelope of the naive extension of scalars. Note that in \cite[\S 5.3]{AK}, the notation $(\bD_K)^\sharp$ is used for what we call $\bD_K$. Now $\bD_K$ is canonically a pseudo-tensor category over $K$.

\subsection{Deligne's universal monoidal categories}
Fix a commutative ring $R$ and $\normalt\in R$.
\subsubsection{} Following \cite[\S 10]{Deligne}, we have the category $[\GL_\normalt,R]_0$, which is the free $R$-linear rigid symmetric monoidal category on one object $V_\normalt$ of dimension $\normalt$. 
Its objects are (up to isomorphism) tensor products of $V_\normalt$ and $V_\normalt^\vee$.

The pseudo-abelian envelope $[\GL_\normalt,R]$ is thus a pseudo-tensor category over $R$. 
By construction, every object $X$ in $[\GL_\normalt,R]$ is a direct summand of a direct sum of objects $\otimes^i V_\normalt\otimes \otimes^jV_\normalt^\vee$. We denote by $\deg X$ the minimal $d\in\mN$ such that $X$ is a direct summand of a direct sum of $\otimes^a V_\normalt \otimes \otimes^b V^\vee_\normalt$ with $a+b\le d$. By \cite[Th\'eor\`eme~10.5]{Deligne}, $[\GL_\normalt,k]$ is a semisimple tensor category when $\charr(k)=0$ and $\normalt\not\in\mZ$.

The following is a reformulation of \cite[Proposition~10.3]{Deligne}.
\begin{lemma}\label{LemUni}
Consider a pseudo-tensor category $\bD$ over $R$. Evaluation at $V_\normalt$ yields an equivalence between $\Tens([\GL_\normalt,R],\bD)$
and the groupoid of objects of dimension $\normalt$ in $\bD$ with their isomorphisms.
\end{lemma}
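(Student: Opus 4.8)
The statement to prove is Lemma~\ref{LemUni}: evaluation at $V_\normalt$ gives an equivalence between $\Tens([\GL_\normalt,R],\bD)$ and the groupoid of dimension-$\normalt$ objects in $\bD$. Let me sketch how I'd prove it.

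First, the key input: $[\GL_\normalt,R]_0$ is the *free* $R$-linear rigid symmetric monoidal category on one object of dimension $\normalt$ (cited as Deligne §10). So by the universal property, $\Tens([\GL_\normalt,R]_0, \bD)$ is equivalent to the groupoid of pairs $(Y, \phi)$ where $Y \in \bD$ has a chosen dual with $\dim Y = \normalt$... actually more carefully: a tensor functor out of the free category is determined up to unique isomorphism by the image of the generating object together with its duality data, and the dimension condition is what forces the trace to be $\normalt$. I should be careful whether "free" here means free on an object-with-dual (so the data is just an object $Y$, since duals are unique up to unique iso when they exist — and in a rigid category every object has one), or free on an object with prescribed dimension. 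I believe the correct statement is the latter: $[\GL_\normalt, R]_0$ is free such that the image object has dimension exactly $\normalt$.

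The main steps: (1) Show evaluation $\Tens([\GL_\normalt,R], \bD) \to \Tens([\GL_\normalt,R]_0, \bD)$, restriction along $[\GL_\normalt,R]_0 \hookrightarrow [\GL_\normalt,R]$, is an equivalence. This uses that $[\GL_\normalt,R]$ is the pseudo-abelian (Karoubi + additive) envelope of $[\GL_\normalt,R]_0$ and that $\bD$ is itself pseudo-abelian; a tensor functor from a preadditive monoidal category to a Karoubian additive one extends uniquely (up to unique iso) to the pseudo-abelian envelope. (2) Combine with the universal property of $[\GL_\normalt,R]_0$ to identify $\Tens([\GL_\normalt,R]_0,\bD)$ with the groupoid of dimension-$\normalt$ objects. (3) Check that under this chain of equivalences the composite is genuinely "evaluate at $V_\normalt$", i.e. a tensor functor $F$ goes to $F(V_\normalt)$, and a monoidal natural isomorphism goes to its component at $V_\normalt$; and that the dimension of $F(V_\normalt)$ is $\normalt$ because $F$ preserves the dual structure, hence the trace morphism $\unit \to \unit$, and $F$ is the identity on $\End(\unit) = R$ (using \ref{DefPseudo}(ii), or rather its $R$-linear analogue).

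The main obstacle is step (1) together with the bookkeeping in step (3): one must verify that the equivalence respects morphisms in the groupoid, i.e. that monoidal natural transformations between tensor functors $[\GL_\normalt,R]\to\bD$ correspond bijectively to isomorphisms $F(V_\normalt)\xrightarrow{\sim} G(V_\normalt)$ in $\bD$ — surjectivity of this map is the content of the universal property (every iso of the generating objects extends to a monoidal natural iso of the functors), and injectivity follows because $V_\normalt$ generates $[\GL_\normalt,R]$ under $\otimes$, duals, $\oplus$ and summands, so a monoidal natural transformation is determined by its component at $V_\normalt$. All of this is "a reformulation of Deligne's Proposition 10.3", so I expect the proof to be short — mostly unwinding what the cited universal property says and observing the dimension/trace compatibility. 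I would write: "This is a reformulation of~\cite[Proposition~10.3]{Deligne}: the universal property of the free rigid symmetric monoidal category $[\GL_\normalt,R]_0$ on an object of dimension $\normalt$ identifies $\Tens([\GL_\normalt,R]_0,\bD)$ with the stated groupoid, and restriction along the pseudo-abelianisation $[\GL_\normalt,R]_0\hookrightarrow[\GL_\normalt,R]$ is an equivalence onto $\Tens([\GL_\normalt,R],\bD)$ since $\bD$ is pseudo-abelian."
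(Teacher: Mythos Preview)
Your proposal is correct and matches the paper's approach: the paper gives no proof at all, simply introducing the lemma as ``a reformulation of \cite[Proposition~10.3]{Deligne}''. Your unwinding of that reformulation (universal property of $[\GL_\normalt,R]_0$ plus the equivalence induced by pseudo-abelianisation since $\bD$ is pseudo-abelian) is exactly the content behind that citation.
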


\subsubsection{}\label{FunGL}  Set $\bD:=[\GL_\normalt,k]$ for an algebraically closed field $k$ of characteristic zero, for $\normalt\in\mZ\subset k$. Consider the tensor category $\svecc$ of finite dimensional super vector spaces, see \cite[\S 1.4]{Del90}. Let $\GL(m|n)$ be the affine group scheme in $\svecc$ of automorphisms of the super space $k^{m|n}$ of even dimension $m$ and odd dimension $n$. As in \cite[0.3]{Del02}, we have the tensor category  $\Rep_k\GL(m|n)$ of its representations in $\svecc$ which restrict to the canonical $\mZ/2$-action along the homomorphism $\mZ/2\to \GL(m|n)$ defining the grading on $\GL(m|n)$.
As an application of Lemma~\ref{LemUni}, there exists a tensor functor
$$H_{m|n}:\bD\to \Rep_k\GL(m|n),\quad V_t\mapsto k^{m|n}$$
 for every $m,n\in\mN$ with $m-n=\normalt$.

 \begin{lemma}\label{FunGLLem}
Retain the notation of \ref{FunGL}.
 \begin{enumerate}[label=(\roman*)]
 \item The functor $H_{m|n}$ is full.
 \item For $X,Y\in\bD$ with $\deg X+\deg Y< 2(m+1)(n+1)$, $H_{m|n}$ induces an isomorphism
 $$\bD(X,Y)\;\stackrel{\sim}{\to}\Hom_{\GL(m|n)}(H_{m|n}(X),H_{m|n}(Y)).$$
 \item There exists an indecomposable object $Q$ in $\bD$ with $\deg Q=mn$, such that $H_{m|n}(Q)$ is projective in $ \Rep_k\GL(m|n)$.
 \end{enumerate}
 \end{lemma}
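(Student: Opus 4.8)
The plan is to exploit the comparison between $[\GL_t,k]$ and the representation theory of $\GL(m|n)$ that Deligne already establishes in \cite[\S 10]{Deligne}, rephrasing and quantifying his estimates. First I would recall the standard description of morphism spaces in $[\GL_t,k]_0$: $\bD((V_t)^{\otimes a}\otimes(V_t^\vee)^{\otimes b},(V_t)^{\otimes c}\otimes(V_t^\vee)^{\otimes d})$ is zero unless $a+d=b+c$, in which case it has a basis indexed by certain oriented perfect matchings (``walled Brauer'' type diagrams) on $a+b+c+d$ points, the composition being diagram concatenation with each closed loop contributing a factor $t$. On the other side, $\Hom_{\GL(m|n)}(H_{m|n}(X),H_{m|n}(Y))$ for $X,Y$ tensor powers of $k^{m|n}$ and its dual is spanned by the images of the same diagrams under the natural map (contractions using the super evaluation/coevaluation of $k^{m|n}$), with closed loops now contributing $\dim(k^{m|n})=m-n=t$. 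This makes the functor $H_{m|n}$ automatically \emph{full} on the generating objects, hence full on all of $\bD$ since every object of $\bD$ is a summand of a direct sum of such tensor powers and $\Rep_k\GL(m|n)$ is Karoubi; that gives (i).

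For (ii) the point is injectivity of $H_{m|n}$ in the stated range. The kernel of the diagram-to-contraction map is controlled by the ``super second fundamental theorem'' for $\GL(m|n)$ (due to Berele--Regev / Sergeev, see also the discussion in \cite{Deligne, CW}): relations among the diagrammatic contractions in $\End((k^{m|n})^{\otimes r})$ first appear when $r$ reaches a threshold governed by the super-Cayley--Hamilton identity, and the relevant bound is that no relations occur as long as the total number of tensor factors involved is $<2(m+1)(n+1)$ — the quantity $(m+1)(n+1)$ being the size of the ``largest staircase partition'' $(n^m)$ (equivalently, the Cauchy identity for $\GL(m|n)$ truncates at the $(m,n)$-hook). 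I would make this precise by passing to the idempotent completion and noting that $\bD(X,Y)$ for $\deg X+\deg Y<2(m+1)(n+1)$ is a direct summand, functorially in $H_{m|n}$, of $\bD((V_t)^{\otimes a}\otimes(V_t^\vee)^{\otimes b},\dots)$ with $a+b+c+d<2(m+1)(n+1)$, and the latter map is injective by the cited FFT/SFT. (Alternatively one can use semisimplicity: for generic $t$ both sides are governed by the same combinatorics of hook partitions, and a dimension count/specialisation argument upgrades fullness to an isomorphism in the prescribed range; but the SFT route is cleaner and works integrally.)

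For (iii) I would take $Q$ to be the object of $\bD$ corresponding, via the above diagram calculus, to a primitive idempotent in $\End((V_t)^{\otimes mn})$ whose image under $H_{m|n}$ projects onto the ``staircase'' submodule of $(k^{m|n})^{\otimes mn}$; concretely, $k^{m|n}$ being a tilting object for $\GL(m|n)$ (it is a direct sum of $m$ copies of the trivial-highest-weight... rather: $(k^{m|n})^{\otimes mn}$ contains as a summand the projective cover of the Steinberg-type module, or more simply any tensor power high enough to contain a projective summand does so already at degree $mn$, since the principal block's projectives have ``hook length'' $mn$). Using Deligne's analysis of indecomposable objects of $[\GL_t,k]$ at integer $t$ — they are parametrised by pairs of partitions and have a well-defined degree — one checks $\deg Q=mn$ and that $Q$ is indecomposable, its indecomposability being inherited from primitivity of the idempotent together with fullness of $H_{m|n}$ in the relevant range (part (i)/(ii)).

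\emph{Main obstacle.} The delicate point is pinning down the exact numerical bound $2(m+1)(n+1)$ in (ii) and the value $mn$ in (iii): both require a careful bookkeeping of when the super-SFT relations for $\GL(m|n)$ kick in, i.e.\ identifying $(m+1)(n+1)$ as the governing threshold (the ``$(m|n)$-hook'' truncation of Cauchy's identity) and locating the minimal-degree projective summand. I expect the cleanest path is to cite the SFT for $\GL(m|n)$ in the form of a bound on the degree of the defining relations and then translate degrees back through the (idempotent-completed) diagram category, rather than to re-derive these bounds; the rest — fullness, Karoubi-descent of fullness/faithfulness to arbitrary summands, indecomposability of $Q$ — is then formal.
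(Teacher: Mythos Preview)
The paper does not give a proof at all: it simply records that (i)--(iii) are known and points to \cite{He,Sergeev} and to specific results in \cite{Selecta}. Your proposal therefore goes well beyond the paper by actually sketching the content behind those citations, and for (i) and (ii) you have identified the right ingredients: fullness is the first fundamental theorem of invariant theory for $\GL(m|n)$ (Sergeev, Berele--Regev), and the injectivity range is governed by the second fundamental theorem, with the threshold $(m+1)(n+1)$ coming from the smallest partition $(n+1)^{m+1}$ not lying in the $(m,n)$-hook. That is exactly what the cited references prove, so your outline for (i)--(ii) is the standard argument and is sound, modulo the bookkeeping you already flag.

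Your treatment of (iii), however, is not a proof as it stands. The paragraph oscillates between incompatible heuristics (``Steinberg-type module'', ``principal block's projectives have hook length $mn$'', ``staircase submodule''), and none of them is pinned down. What one actually needs is a concrete indecomposable summand of $(k^{m|n})^{\otimes mn}$ whose image is projective; in the references this is obtained from the explicit description of the projective cover of the trivial module in $\Rep\GL(m|n)$ (or, equivalently, from the classification of which indecomposables of $[\GL_t,k]$ lie in the kernel of $H_{m|n}$, as in \cite[\S 8]{Selecta}). Your fullness argument does give that a lift $Q$ is indecomposable once you know the target is, but you have not exhibited the target or explained why it occurs in degree exactly $mn$. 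If you want a self-contained argument here, you should name the specific highest weight (the ``maximally atypical'' weight with Kac module of length $2^{\min(m,n)}$) and cite the known fact that its projective cover is a summand of $V^{\otimes mn}$; otherwise, doing as the paper does and citing \cite[Proposition~8.2.3(i)]{Selecta} is the honest route.
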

 \begin{proof}
 These statements are well-known, see e.g. \cite{He, Sergeev}. The precise statements can also be found in \cite[Theorem~7.2.1(ii)]{Selecta}, the paragraph above \cite[Corollary~7.2.2]{Selecta}, and \cite[Proposition~8.2.3(i)]{Selecta}.
 \end{proof}

\subsubsection{} A rigid object $X$ in a symmetric monoidal category is {\bf symmetrically self-dual} if $X^\vee\simeq X$ and for $\ev_X:X\otimes X\to\unit$, we have $\ev_X=\ev_X\circ\sigma_{X,X}$.
Following \cite[\S 9]{Deligne}, we have the category $[\OO_\normalt,R]_0$, which is the free $R$-linear symmetric monoidal category on one symmetrically self-dual object $U_\normalt$ of dimension $\normalt$. 
Its objects are tensor powers of $U_\normalt$.

The pseudo-abelian envelope $[\OO_\normalt,R]$ is thus a pseudo-tensor category over $R$. By construction, every object $X$ in $[\OO_\normalt,R]$ is a direct summand of a direct sum of objects $\otimes^i U_\normalt$. We denote by $\deg X$ the minimal $d\in\mN$ such that $X$ is a direct summand of a direct sum of $\otimes^i U_\normalt$ with $i\le d$. By \cite[Th\'eor\`eme~9.7]{Deligne}, $[\OO_\normalt,k]$ is a semisimple tensor category when $\charr(k)=0$ and $\normalt\not\in\mZ$.

\begin{lemma}[Proposition~9.4 \cite{Deligne}]\label{LemUniO}
Consider a pseudo-tensor category $\bD$ over $R$. Evaluation at $U_\normalt$ yields an equivalence between $\Tens([\OO_\normalt,R],\bD)$
and the groupoid of symmetrically self-dual objects of dimension $\normalt$ in $\bD$.
\end{lemma}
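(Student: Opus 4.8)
The plan is to follow the route of \cite[\S 9]{Deligne} (where this is Proposition~9.4), but phrased through a presentation of $[\OO_\normalt,R]_0$ and the universal property of the pseudo-abelian envelope. First I would record that $[\OO_\normalt,R]_0$, as an $R$-linear symmetric monoidal category, is presented by a single object $U$, two morphisms $\ev\colon U\otimes U\to\unit$ and $\co\colon\unit\to U\otimes U$, and the three relations
$$(U\otimes\ev)\circ(\co\otimes U)=\id_U=(\ev\otimes U)\circ(U\otimes\co),\qquad \ev\circ\sigma_{U,U}=\ev,\qquad \ev\circ\co=\normalt\cdot\id_\unit.$$
Concretely this is the Brauer-type category whose morphism spaces are free $R$-modules on perfect matchings of the boundary points, closed loops evaluating to $\normalt$; the presentation is exactly the assertion that the coherence theorem for symmetric monoidal categories together with the loop relation accounts for all identities among composites of $\ev$, $\co$, $\sigma$ and identities. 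Granting this, a tensor functor $[\OO_\normalt,R]_0\to\bD$ is the same datum as an object $X\in\bD$ equipped with morphisms $\ev_X$, $\co_X$ satisfying the images of the three relations: the first realises $(X,\ev_X,\co_X)$ as a dual pair with $X^\vee=X$ (and then $\co_X$ is uniquely determined by $\ev_X$), while the remaining two say that $\ev_X$ is symmetric — so $X$ is symmetrically self-dual — and that $\dim(X)=\ev_X\circ\co_X=\normalt$. Thus tensor functors out of $[\OO_\normalt,R]_0$ correspond precisely to symmetrically self-dual objects of dimension $\normalt$ in $\bD$, and evaluation at $U_\normalt$ implements this correspondence on objects.

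Next I would pass to the Karoubi envelope: since $\bD$ is pseudo-abelian by \ref{DefPseudo}(iv), restriction along the inclusion $[\OO_\normalt,R]_0\hookrightarrow[\OO_\normalt,R]$ induces an equivalence $\Tens([\OO_\normalt,R],\bD)\xrightarrow{\ \sim\ }\Tens([\OO_\normalt,R]_0,\bD)$ by the universal property of the pseudo-abelian envelope \cite[\S 1.2]{AK}. Composing with the previous step gives that $\ev_{U_\normalt}$ is essentially surjective and bijective on isomorphism classes of objects. For fully faithfulness, I would first note that both sides are groupoids — a monoidal natural transformation between tensor functors on a rigid monoidal category is automatically invertible, its inverse at $Z$ obtained from the component at $Z^\vee$ via the evaluation and coevaluation of $Z$ — and then argue that a monoidal natural transformation $\eta\colon F\Rightarrow G$ is determined by its single component $\phi:=\eta_{U_\normalt}$: monoidality forces $\eta$ on $U_\normalt^{\otimes n}$ to equal $\phi^{\otimes n}$, and naturality with respect to the idempotents cutting out general objects of $[\OO_\normalt,R]$ then fixes the rest. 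Conversely, given an isomorphism $\phi\colon F(U_\normalt)\to G(U_\normalt)$ in the target groupoid, i.e. one intertwining the evaluations, the assignment $U_\normalt^{\otimes n}\mapsto\phi^{\otimes n}$ is natural with respect to $\sigma$ automatically and with respect to $\ev$ and $\co$ by the hypothesis on $\phi$ (and the uniqueness of $\co$), hence extends uniquely over the Karoubi envelope to a monoidal natural transformation $F\Rightarrow G$, invertible because $\phi$ is.

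The one genuinely substantive ingredient is the presentation of $[\OO_\normalt,R]_0$ invoked above — equivalently, the well-definedness on Brauer diagrams of the functor attached to a symmetrically self-dual object $X$, so that two diagrams representing the same perfect matching and the same number of closed components are sent to the same morphism of $\bD$. I expect this to be the main obstacle, and I would handle it either by directly citing \cite[\S 9]{Deligne}, or by the standard string-diagram argument: the coherence theorem for symmetric monoidal categories reduces the claim to the evaluation of closed components, which is governed solely by $\ev_X\circ\co_X=\dim(X)=\normalt$. Everything else is the formal bookkeeping of universal properties and is routine.
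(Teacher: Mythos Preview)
The paper does not supply its own proof of this lemma: it is stated with the attribution ``[Proposition~9.4 \cite{Deligne}]'' and no proof environment follows. Your outline is essentially the argument Deligne gives in \cite[\S 9]{Deligne} --- present $[\OO_\normalt,R]_0$ as the Brauer category generated by a single symmetrically self-dual object with loop value $\normalt$, read off the universal property from the presentation, and then pass through the pseudo-abelian envelope --- so there is nothing to compare and your approach is correct.
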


\subsubsection{}\label{FunOSp}  Set $\bD:=[\OO_\normalt,k]$ for an algebraically closed field $k$ of characteristic zero, for $\normalt\in\mZ\subset k$. Consider a non-degenerate (super)symmetric bilinear form on $k^{m|2n}\in\svecc$ and let $\OSp(m|2n)$ be the closed subgroup of $\GL(m|2n)$ which preserves the form.
As an application of Lemma~\ref{LemUniO}, there exists a tensor functor
$$F_{m|2n}:\bD\to \Rep_k\OSp(m|2n),\quad U_t\mapsto k^{m|2n}$$
 for every $m,n\in\mN$ with $m-2n=\normalt$.
 
 \begin{lemma}\label{FunOSpLem}
Retain the notation of \ref{FunOSp}.
 \begin{enumerate}[label=(\roman*)]
 \item The functor $F_{m|2n}$ is full.
 \item For $X,Y\in\bD$ with $\deg X+\deg Y< 2(m+1)(n+1)$, $F_{m|2n}$ induces an isomorphism
 $$\bD(X,Y)\;\stackrel{\sim}{\to}\;\Hom_{\OSp(m|2n)}(F_{m|2n}(X),F_{m|2n}(Y)).$$
 \item There exists an indecomposable object $Q$ in $\bD$ with $\deg Q=mn$, such that $F_{m|2n}(Q)$ is projective in $ \Rep_k\OSp(m|2n)$.
 \end{enumerate}
 \end{lemma}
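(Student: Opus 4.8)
The plan is to prove Lemma~\ref{FunOSpLem} along exactly the lines of Lemma~\ref{FunGLLem}, substituting the representation theory of orthosymplectic supergroups for that of general linear supergroups; all three parts are standard consequences of the (super) invariant theory of $\OSp(m|2n)$ at parameter $\normalt=m-2n$, so the work is one of quoting the precise statements rather than developing new theory. By Lemma~\ref{LemUniO}, $F_{m|2n}$ is the canonical functor from the ``Brauer category'' $[\OO_\normalt,k]_0$ to $\Rep_k\OSp(m|2n)$, and the first fundamental theorem of invariant theory for the orthosymplectic supergroup (Lehrer and Zhang; see also the literature on the orthosymplectic Brauer category) says precisely that every $\OSp(m|2n)$-equivariant map between tensor powers of $k^{m|2n}$ lies in the image of the Brauer diagram algebra, i.e.\ $F_{m|2n}$ is full on $[\OO_\normalt,k]_0$. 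As $[\OO_\normalt,k]$ is the pseudo-abelian envelope of $[\OO_\normalt,k]_0$ and $\Rep_k\OSp(m|2n)$ is idempotent complete, fullness passes to $[\OO_\normalt,k]$ (images of idempotents split compatibly on both sides), which gives (i).

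For (ii) I would argue as in the sketch behind \ref{FunGLLem}(ii). The surjection $\bD(X,Y)\tto\Hom_{\OSp(m|2n)}(F_{m|2n}(X),F_{m|2n}(Y))$ from (i) has kernel the tensor ideal $\ker F_{m|2n}$, and by the second fundamental theorem for $\OSp(m|2n)$ (the orthosymplectic counterpart of the ideal analysis carried out for $\GL$ in \cite{Selecta}) this ideal meets $\bD(\otimes^rU_\normalt,\otimes^sU_\normalt)$ only once $r+s\ge 2(m+1)(n+1)$, the first relation being governed by the orthosymplectic hook condition. Hence the surjection is an isomorphism for tensor powers in the stated range, and for arbitrary $X,Y$ one realises $X$ and $Y$ as direct summands of $\otimes^{\deg X}U_\normalt$ and $\otimes^{\deg Y}U_\normalt$ and uses that $\bD(X,Y)$ and $\Hom_{\OSp(m|2n)}(F_{m|2n}(X),F_{m|2n}(Y))$ are compatible direct summands of the corresponding morphism spaces for these tensor powers, exactly as in the $\GL$ case.

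For (iii) I would use the structure of $\Rep_k\OSp(m|2n)$ as a highest-weight category with enough (finite-dimensional) projectives, in which every indecomposable projective occurs as a direct summand of a tensor power of $k^{m|2n}$ (as $k^{m|2n}$ is symmetrically self-dual it generates $\Rep_k\OSp(m|2n)$ as a monoidal category); the ``smallest atypical projective'' --- a summand of $(k^{m|2n})^{\otimes mn}$ and of no strictly smaller tensor power --- is the required object. Transporting it back through the fullness statement (i) produces an indecomposable $Q\in[\OO_\normalt,k]$ with $F_{m|2n}(Q)$ projective, and, using the faithfulness of (ii) in degrees below $(m+1)(n+1)$, one gets $\deg Q=mn$ because $mn$ is exactly the minimal tensor power in which this projective appears. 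An alternative route to the mere existence of a projective image is to restrict the degree-$2mn$ projective of $\Rep_k\GL(m|2n)$ furnished by \ref{FunGLLem}(iii) along $\OSp(m|2n)\hookrightarrow\GL(m|2n)$ --- restriction preserves projectivity since $\GL(m|2n)/\OSp(m|2n)$ is affine, so its right adjoint $\mathrm{ind}$ is exact --- and then pass to an indecomposable summand; but this does not pin down the degree.

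The main obstacle is therefore part (iii): identifying the indecomposable projective of $\Rep_k\OSp(m|2n)$ of the right ``size'' and checking that its degree in $[\OO_\normalt,k]$ is precisely $mn$; by contrast (i) and (ii) reduce to orthosymplectic invariant theory once the parameter identification $\normalt=m-2n$ and the hook bound are in place. A minor point within (ii) is to verify that the orthosymplectic second fundamental theorem yields the (non-sharp) constant $2(m+1)(n+1)$ used in the statement, which is all one actually needs.
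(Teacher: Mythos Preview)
Your treatment of (i) and (ii) is essentially the paper's: both are citations to orthosymplectic invariant theory, with (i) being \cite[Theorem~5.3]{LZ-FFT} and (ii) being \cite[7.1.1(ii), 8.1.3(i)]{Selecta} or \cite[Theorem~5.12]{Yang}. One minor slip in your (ii): by the definition of $\deg$, $X$ is a summand of a direct sum of $\otimes^i U_\normalt$ with $i\le\deg X$, not of $\otimes^{\deg X}U_\normalt$ alone; the argument goes through with that adjustment.

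For (iii) the paper takes a different and more efficient route that sidesteps the obstacle you correctly flag. Rather than hunting for a specific projective in $\Rep_k\OSp(m|2n)$ and computing the minimal tensor power in which it appears, the paper first disposes of the semisimple cases $m\le 1$ or $n=0$ (where $Q=\unit$ of degree $0=mn$ works), and for $m>1$, $n>0$ invokes the observation from \cite{CH} that the indecomposables of $[\OO_\normalt,k]$ sent to projectives by $F_{m|2n}$ are exactly those annihilated by $F_{m-2|2n-2}$. The degree then drops out of the explicit description of $\ker F_{m-2|2n-2}$ already recorded in \cite[Theorem~7.1.1]{Selecta}. This converts your unresolved step (``$mn$ is exactly the minimal tensor power in which this projective appears'') into a statement about a kernel ideal whose answer is on file. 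Your direct approach is not wrong, but completing it would amount to reproving that kernel description; the paper's route is the shortcut you were looking for.
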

\begin{proof}
Claim (i) is \cite[Theorem~5.3]{LZ-FFT}. Claim (ii) is \cite[7.1.1(ii) and 8.1.3(i)]{Selecta} or follows from \cite[Theorem~5.12]{Yang}. If $m\le 1$ or $n=0$, then $\Rep\OSp(m|2n)$ is semisimple, so claim (iii) becomes trivial.
The case $m>1$ and $n>0$ follows from the observation in \cite{CH} that the objects in $\bD$ sent to projective objects under $F_{m|2n}$ are the same ones which are sent to zero by $F_{m-2|2n-2}$, and the description of that kernel as in~\cite[Theorem~7.1.1]{Selecta}.
\end{proof}


\section{Monoidal splitting and faithfulness}\label{SecSplit}
We fix a field $k$ and a pseudo-tensor category $(\bD,\otimes,\unit,\sigma)$ over $k$.

\subsection{Splitting of morphisms}

\begin{definition}
An object $X\in\bD$ {\bf splits a morphism} $f:A\to B$ in $\bD$ if $X\otimes f$ is split.
The category $\bD$ is {\bf self-splitting} if for every morphism $h$ in $\bD$ there exists an object which splits $h$.
\end{definition}

For an object $X\in\bD$, we will encounter the morphism
$$\cE_X:=\ev_X\otimes X^\vee\otimes X- X^\vee\otimes X\otimes \ev_X:\; \,X^\vee\otimes X\otimes X^\vee\otimes X\to X^\vee\otimes X$$
several times, hence we give it a name.

\begin{lemma}\label{LemSplit}
\begin{enumerate}[label=(\roman*)]
\item The morphisms $\ev_X$ and $\co_X$ are split by $X$ and by $X^\vee$.
\item The morphism $\cE_X$ is split by $X\otimes X^\vee$.


\end{enumerate}
\end{lemma}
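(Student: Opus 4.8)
The plan is to exhibit, for each of the two morphisms, an explicit splitting map built out of the evaluation and coevaluation and to verify the identity $f\circ g\circ f=f$ diagrammatically, using only the snake relations \eqref{snake} and functoriality/naturality of $\otimes$. For part (i) I would first treat $\ev_X:X^\vee\otimes X\to\unit$. Tensoring with $X$ on the left gives $X\otimes\ev_X:X\otimes X^\vee\otimes X\to X$, and I claim the candidate inverse is $\co_X\otimes X:X\to X\otimes X^\vee\otimes X$. Indeed $(X\otimes\ev_X)\circ(\co_X\otimes X)=\id_X$ by the first snake relation in \eqref{snake}, which is even stronger than being split (it shows $X\otimes\ev_X$ is a split epimorphism). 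Tensoring instead with $X^\vee$ on the right, $\ev_X\otimes X^\vee:X^\vee\otimes X\otimes X^\vee\to X^\vee$ is split by $X^\vee\otimes\co_X$ via the second snake relation. For $\co_X:\unit\to X\otimes X^\vee$, the symmetric argument applies: $X^\vee\otimes\co_X$ is split by $X^\vee\otimes\ev_X$ (first snake relation, reading it backwards), and $\co_X\otimes X$ is split by $\ev_X\otimes X$ (second snake relation). In each case one of the two composites is actually the identity, so the split condition holds trivially.

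For part (ii) the task is to split $\cE_X=\ev_X\otimes X^\vee\otimes X-X^\vee\otimes X\otimes\ev_X$ after tensoring on the left by $X\otimes X^\vee$; write $A:=X\otimes X^\vee$ for brevity and $h:=A\otimes\cE_X$. The natural candidate for the splitting is $g:=A\otimes(X^\vee\otimes\co_X\otimes X)$, i.e. the morphism that reinserts a $\co_X$ in the appropriate slot, since up to the snake relations $\co_X$ is a one-sided inverse to each of the two $\ev_X$-summands. The key computation is then to expand $h\circ g\circ h$ and show it collapses back to $h$. The main obstacle — and the only real content of part (ii) — is bookkeeping: $h$ is a difference of two terms, so $h\circ g\circ h$ is a sum of four terms, and one must check that after applying the snake relations each term either reproduces one of the two summands of $h$ or cancels against another. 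Concretely, the "diagonal" terms (first $\ev_X$-summand followed by $g$ followed by first $\ev_X$-summand, and likewise for the second) reduce to the corresponding summand of $h$ by a single application of \eqref{snake}, while the two "off-diagonal" cross terms must be shown to cancel; this cancellation uses the symmetry of the braiding and the fact that the two $\ev_X$-insertions act on disjoint tensor factors after the $\co_X$ has been inserted by $g$, so that the relevant subexpressions coincide and appear with opposite signs in $h\circ g\circ h$.

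I expect the cleanest way to present the computation in part (ii) is via string diagrams (or equivalently a short sequence of displayed equations), drawing $X$ as a downward strand and $X^\vee$ as an upward strand, so that $\ev_X$, $\co_X$ are the cap and cup and the snake relations become the straightening of a zig-zag; then $\cE_X$ is visibly a difference of two "cap on the left pair" vs. "cap on the right pair" configurations and the splitting $g$ is a cup inserted between them. In that language $h\circ g\circ h$ is drawn, each zig-zag is straightened, and one reads off that the result equals $h$. If a purely algebraic write-up is preferred, the same four-term expansion works verbatim, with each straightening step justified by \eqref{snake} and each cross-term cancellation justified by the fact that the two summands differ only by which of two disjoint factors the cap is placed on. Either way, part (i) is immediate and part (ii) is a finite, sign-tracked diagram chase with no conceptual difficulty beyond the organisation of the four terms.
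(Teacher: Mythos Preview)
Your part (i) is correct and is exactly the paper's argument.

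Part (ii), however, does not work with the splitting you propose. Write $g_0:=X^\vee\otimes\co_X\otimes X$ and $e_1:=\ev_X\otimes X^\vee\otimes X$, $e_2:=X^\vee\otimes X\otimes\ev_X$, so that $\cE_X=e_1-e_2$. A single application of each snake relation gives $e_1\circ g_0=\id_{X^\vee X}$ \emph{and} $e_2\circ g_0=\id_{X^\vee X}$, hence $\cE_X\circ g_0=0$. Since your $g$ is $A\otimes g_0$ with $A=X\otimes X^\vee$, this forces $h\circ g=A\otimes(\cE_X\circ g_0)=0$, so $h\circ g\circ h=0\neq h$. In the four-term expansion you describe, the ``off-diagonal'' terms are $-e_1\circ g_0\circ e_2=-e_2$ and $-e_2\circ g_0\circ e_1=-e_1$; they do not cancel each other but rather kill the diagonal terms. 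More conceptually, any candidate of the form $A\otimes g_0$ reduces the problem to splitting $\cE_X$ itself, so the extra factor $A$ is doing no work.

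The paper's splitting genuinely uses the outer factors: it takes $f:=X\otimes\cE_X\otimes X^\vee$ (which is conjugate to your $h$ by a braiding) and sets
\[
g\;=\;X\otimes X^\vee\otimes X\otimes X^\vee\otimes\co_X\;-\;\co_X\otimes X\otimes\ev_X\otimes X^\vee\otimes\co_X,
\]
a two-term map in which the $\co_X$'s attach to the outer $X$ and $X^\vee$ strands, not merely between the inner ones. A direct string-diagram check then gives $f\circ g\circ f=f$. So the missing idea is that the splitting must entangle the tensored-on copy of $X\otimes X^\vee$ with the domain of $\cE_X$; your ansatz keeps them separate and therefore cannot succeed.
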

\begin{proof}
It follows from \eqref{snake} that $f:=X\otimes \ev_X$ is split, with $g:=\co_X\otimes X$, which proves part (i).
It follows similarly that $f:=X\otimes \cE_X\otimes X^\vee$ is split, with 
$$g:=X\otimes X^\vee\otimes X\otimes X^\vee\otimes \co_X-\co_X\otimes X\otimes \ev_X\otimes X^\vee\otimes \co_X,$$ which proves part (ii). 
\end{proof}


The following lemma is well-known.
\begin{lemma}\label{LemP}
Assume $\bD$ is a tensor category and take $X\in\bD$. The following are equivalent:
\begin{enumerate}[label=(\roman*)]
\item $X$ is projective
\item $X$ is injective.
\item $X\otimes f$ is split for every morphism $f$ in $\bD$.
\end{enumerate}
\end{lemma}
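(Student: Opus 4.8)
The plan is to prove the equivalence of the three conditions in Lemma~\ref{LemP} by establishing (i)$\Leftrightarrow$(ii) using rigidity, and then (i)$\Leftrightarrow$(iii) by a direct argument with the dual.

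\medskip

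\noindent\textbf{Step 1: (i)$\Leftrightarrow$(ii).} Since $\bD$ is rigid, the functor $-\otimes X^\vee$ is both left and right adjoint to $-\otimes X$, so $-\otimes X$ and $-\otimes X^\vee$ are exact and preserve both projectives and injectives. In particular, for any object $Y$ and any $X\in\bD$ there are natural isomorphisms $\Hom(-\otimes X, Y)\cong \Hom(-, Y\otimes X^\vee)$ and similarly for $\Ext^1$. Thus $\Hom(-,X)$ is exact if and only if $\Hom(-\otimes X^\vee, \unit)$ is exact, i.e. if and only if $\unit$ is injective relative to the image of $-\otimes X^\vee$; dualising, $\Hom(X,-)\cong\Hom(\unit, X^\vee\otimes -)$ is exact iff $\unit$ is projective relative to the appropriate class. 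Concretely, I would show $X$ is projective iff $X^\vee$ is injective iff $X$ is injective, the last step using that $(-)^\vee$ is a (contravariant) exact self-equivalence of $\bD$, so it swaps projectives and injectives, and applying it twice (with $X^{\vee\vee}\cong X$) gives the claim. This part is standard and short.

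\medskip

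\noindent\textbf{Step 2: (iii)$\Rightarrow$(i).} Suppose $X\otimes f$ is split for every morphism $f$. Given a surjection $q:A\twoheadrightarrow B$ in $\bD$ and a morphism $h:X\to B$, I want to lift $h$ through $q$. Apply the hypothesis to $q$ itself: $X\otimes q$ is split, hence (as $\bD$ is Karoubi) $X\otimes q$ is, up to isomorphisms, a projection $C\oplus K \to C$; in particular $X\otimes q$ admits a section $s:X\otimes B\to X\otimes A$ with $(X\otimes q)\circ s = \id_{X\otimes B}$ (here I use that $X\otimes q$ is still an epimorphism because $X\otimes -$ is exact, so the split epi has a genuine section). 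Now use rigidity to transport the lifting problem: a morphism $X\to B$ corresponds under the adjunction $\Hom(X,B)\cong\Hom(\unit, X^\vee\otimes B)$ to $\tilde h:\unit\to X^\vee\otimes B$, and the section for $X\otimes q$ yields (after tensoring with $X^\vee$ on the left and using the snake identities) a section for $X^\vee\otimes X\otimes q$ composed appropriately; chasing $\tilde h$ through $(X^\vee\otimes s)$ and back produces the desired lift $X\to A$. Equivalently and more cleanly: $\Hom(X,-)\cong\Hom(\unit,X^\vee\otimes-)$, and the image of $q$ under $X^\vee\otimes(X\otimes -)$-type manipulations splits, so $\Hom(X,q)$ is surjective. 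I expect this bookkeeping with the snake identities to be the main technical nuisance, though not deep.

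\medskip

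\noindent\textbf{Step 3: (i)$\Rightarrow$(iii).} Conversely, suppose $X$ is projective (hence, by Step 1, also injective). Let $f:A\to B$ be arbitrary. Since $\bD$ is abelian, factor $f = \iota\circ\pi$ with $\pi:A\twoheadrightarrow I$ epi and $\iota: I\hookrightarrow B$ mono, where $I=\mathrm{im}(f)$. Tensoring with $X$ preserves this factorisation. It suffices to split $X\otimes\pi$ and $X\otimes\iota$ separately and combine. For $X\otimes\iota$: $\iota$ is a monomorphism, and since $X\otimes-$ is exact and preserves injectives, by Step 1 combined with adjunction the map $X\otimes\iota$ has a retraction precisely when $X^\vee\otimes X\otimes I$ is injective relative to the monomorphism in question — but in fact projectivity of $X$ gives that $\Hom(X\otimes\iota, -)$-type exactness fails to be the right formulation; instead I argue directly: $\iota$ monic means $\Hom(-,\iota)$ need not be epi, but $\Hom(X,\iota)$ composed with the snake identities shows $X\otimes\iota$ is a split mono because $X^\vee$ is projective and $X^\vee\otimes X$ has $\unit$ as a direct summand, so the retraction for $\iota$ tensored up works. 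The cleanest route is: $X$ projective and injective $\Rightarrow$ $X\otimes -$ and $X^\vee\otimes -$ preserve projectives and injectives; then for monic $\iota$ the object $X^\vee\otimes X\otimes B$ being injective (as $X$ is injective, $X^\vee\otimes X$ is a summand of $X^\vee\otimes X\otimes X^\vee\otimes X$... ) lets $X^\vee\otimes X\otimes\iota$ split, and pre/post-composing with $\co_X, \ev_X$ via the snake identities \eqref{snake} yields a splitting of $X\otimes\iota$. Symmetrically, projectivity of $X$ makes $X\otimes\pi$ a split epi. Combining the section of $X\otimes\pi$ and the retraction of $X\otimes\iota$ gives $g$ with $(X\otimes f)\circ g\circ(X\otimes f) = X\otimes f$, so $X\otimes f$ is split.

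\medskip

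\noindent The main obstacle I anticipate is not any single hard idea but keeping the adjunction/snake-identity manipulations clean in Steps 2 and 3: every statement about $X\otimes(-)$ being split has to be converted, via $\co_X$ and $\ev_X$, into a statement about lifting or extending along $X^\vee\otimes X$, and one must be careful that the idempotents produced are the right ones and that Karoubianness of $\bD$ is genuinely used to pass from "split idempotent" to "honest section/retraction". Since the lemma is flagged as well-known, I would keep all three steps terse, citing rigidity and exactness of $X\otimes-$ (Remark~\ref{RemSigma}) as the only inputs.
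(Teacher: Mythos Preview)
Your Step~2 has a real gap. You apply the hypothesis to the epimorphism $q$ to get a section $s$ of $X\otimes q$, and then want to deduce that $\Hom(X,q)$ is surjective via the adjunction $\Hom(X,-)\cong\Hom(\unit,X^\vee\otimes-)$. But that adjunction says you need a section of $X^\vee\otimes q$, not of $X\otimes q$. Tensoring $s$ by $X^\vee$ only gives a section of $X^\vee\otimes X\otimes q$, and there is \emph{no} natural retraction $X^\vee\otimes X\otimes M\to X^\vee\otimes M$: the obvious candidate built from $\ev_X$ and $\co'_X$ composes to $\dim(X)\cdot\id$, not $\id$. Concretely, if you write down the ``lift'' $g$ obtained by chasing $h$ through $X^\vee\otimes s$ and the snake maps, you find $q\circ g=\dim(X)\cdot h$, which is useless when $\dim X=0$. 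The paper avoids this by proving that $X^\vee$ (rather than $X$) is projective: since $\Hom(X^\vee,-)\cong\Hom(\unit,X\otimes-)$, the section of $X\otimes q$ immediately gives surjectivity of $\Hom(X^\vee,q)$. Then duality (your Step~1) transfers this to $X$. Alternatively you could prove (iii)$\Rightarrow$(ii) directly, since $\Hom(-,X)\cong\Hom(\unit,X\otimes(-)^\vee)$ and $\iota^\vee$ is epi when $\iota$ is mono; then Step~1 gives (i).

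Your Step~3 is essentially correct in spirit but the write-up wanders: the clean version is that $X$ projective implies $X\otimes D$ projective for every $D$ (via $\Hom(X\otimes D,-)\cong\Hom(X,D^\vee\otimes-)$), so $X\otimes\pi$ is an epi onto a projective and hence splits, and dually (using Step~1) $X\otimes\iota$ is a mono out of an injective and hence splits. The paper does this slightly differently, splitting the two epimorphisms $X\otimes M\twoheadrightarrow X\otimes\mathrm{im}(f)$ and $X\otimes N\twoheadrightarrow X\otimes\mathrm{coker}(f)$ using only projectivity, without invoking (ii).
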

\begin{proof}
First we show that (i) implies (iii). For a morphism $f:M\to N$ we denote the image and cokernel by $A$ and $B$. By adjunction, $X\otimes D$ is projective, for every $D\in\bD$.  Consequently $X\otimes M\tto X\otimes A$ and $X\otimes N\tto X\otimes B$ split, from which it follows that $X\otimes f$ is split. That (ii) implies (iii) is proved similarly.

Now if (iii) is satisfied, then it follows by adjunction that $X^\vee$ is both projective and injective. Also by adjunction, the fact that $X^\vee$ is projective (resp. injective) implies that $X$ is injective (resp. projective). Hence (iii) implies (i) and (ii).
\end{proof}

\subsection{Faithfulness of objects}

\begin{definition}\label{DefF}
An object $X\in\bD$ is {\bf faithful} if one of the following two equivalent conditions is satisfied:
\begin{enumerate}[label=(\roman*)]
\item The functor $X\otimes-:\bD\to\bD$ is faithful.
\item The evaluation $\ev_X:X^\vee\otimes X\to \unit$ is an epimorphism in $\bD$.
\end{enumerate}
\end{definition}
For our applications, we will need a strictly stronger notion than the above faithfulness.
\begin{definition}\label{DefSF}
An object $X\in\bD$ is {\bf strongly faithful} if one of the following two equivalent conditions is satisfied:
\begin{enumerate}[label=(\roman*)]
\item For every $M,N\in\bD$, the sequence
$$0\to\bD(M,N)\xrightarrow{X\otimes -}\bD(X M,X N)\xrightarrow{(X\otimes-)-(s\otimes N)(X\otimes -)(s\otimes M)}\bD(X X M,X X N),$$
with $s=\sigma_{XX} $, is exact in $\Vecc$.
\item The sequence
$$\gamma_X:\;\,X^\vee\otimes X\otimes X^\vee\otimes X\xrightarrow{\cE_X}X^\vee\otimes X\xrightarrow{\ev_X}\unit\to 0$$
is exact in $\bD$, meaning $\gamma_X\in\Xi(\bD)$.
\end{enumerate}
\end{definition}
Clearly $X$ is (strongly) faithful if and only if $X^\vee$ is (strongly) faithful.
Examples of faithful objects which are {\bf not} strongly faithful will be given in the next subsection.

\begin{example}\label{ExampUni} ${}$
\begin{enumerate}[label=(\roman*)]
\item The unit $\unit$ is strongly faithful in any pseudo-tensor category $\bD$.
\item The objects $V_\normalt$ and $U_\normalt$ in $[\GL_\normalt,k]$ and $[\OO_\normalt,k]$ are strongly faithful. This follows easily from the diagrammatic calculus and version (i) of Definition~\ref{DefSF}.
\end{enumerate}
\end{example}
We say that $X\in\bD$ {\bf reflects cokernels} when every sequence $\gamma$ as in \eqref{rexeq} is exact if and only if $X\otimes \gamma$ is exact.  
Note that one direction of the condition is automatic by Remark~\ref{RemSigma}. 
Reflecting kernels is defined similarly. 
Remark~\ref{RemSigma} also shows that $X\otimes Y$ reflects cokernels if and only if  both $X$ and $Y$ reflect cokernels; a fact that we will use freely.



\begin{lemma}\label{gammaXX}
For any $X\in\bD$, the sequence $X^\vee\otimes X\otimes \gamma_X$ is split exact.
\end{lemma}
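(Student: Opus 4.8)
The plan is to show that $X^\vee\otimes X\otimes\gamma_X$ is split exact by exhibiting an explicit contracting homotopy, built from the (co)evaluation morphisms and the snake identities in \eqref{snake}. Recall $\gamma_X$ is the sequence
$$X^\vee\otimes X\otimes X^\vee\otimes X\xrightarrow{\cE_X}X^\vee\otimes X\xrightarrow{\ev_X}\unit\to 0,$$
so after tensoring on the left by $X^\vee\otimes X$ we must check that
$$X^\vee X X^\vee X X^\vee X\xrightarrow{X^\vee X\cE_X}X^\vee X X^\vee X\xrightarrow{X^\vee X\ev_X}X^\vee X\to 0$$
is split exact, i.e.\ that $X^\vee X\ev_X$ is a split epimorphism, that $X^\vee X\cE_X$ factors as a split epi onto the kernel of $X^\vee X\ev_X$, and that this kernel is a direct summand. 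Equivalently, following the discussion in the paragraph on split morphisms, it suffices to produce morphisms realising a splitting of the two-step complex: a section $s_0$ of $X^\vee X\ev_X$ and a morphism $s_1$ with $(X^\vee X\cE_X)\circ s_1 + s_0\circ (X^\vee X\ev_X)=\id$ on $X^\vee X X^\vee X$ and $s_1\circ(X^\vee X\cE_X)$ idempotent with the complementary behaviour, so that the whole sequence becomes contractible.

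The concrete choices I would use are the obvious ones coming from $\co_X$: take $s_0:=X^\vee\otimes\co_X:X^\vee X\to X^\vee X X^\vee X$ (using $\co_X:\unit\to X X^\vee$ inserted in the middle), which is a section of $X^\vee X\ev_X=X^\vee\otimes\ev_X\otimes$ by the first snake identity in \eqref{snake}. For the homotopy at the next spot, I would insert another $\co_X$ in the appropriate tensor slot of $X^\vee X X^\vee X X^\vee X$ and take the difference of the two natural ways of doing so, mirroring the definition of $\cE_X$ itself, exactly as in the proof of Lemma~\ref{LemSplit}(ii) where $g$ was built as a difference of two $\co_X$-insertions. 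Then the verification reduces to repeated application of the snake identities \eqref{snake} together with the interchange/naturality of $\otimes$ — purely formal string-diagram manipulations. An alternative, possibly cleaner route: by Lemma~\ref{LemSplit}(ii) the morphism $\cE_X$ is split by $X\otimes X^\vee$; since $X^\vee\otimes X$ contains $\unit$ as a summand and (more to the point) $X^\vee\otimes X\otimes-$ applied to $\cE_X$ can be compared, via the snake identities, to $X\otimes X^\vee\otimes\cE_X$ up to isomorphism of sequences, one gets that $X^\vee\otimes X\otimes\cE_X$ is split; combined with the splitting of $X^\vee X\ev_X$ via $s_0$ above one assembles split exactness of the whole sequence.

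I expect the main obstacle to be bookkeeping rather than conceptual: keeping track of which tensor factor each (co)evaluation acts on, and checking that the candidate homotopy maps compose correctly to the identity minus an idempotent, without sign or slot errors. In particular one must verify not just that $X^\vee X\ev_X$ is a split epi and $X^\vee X\cE_X$ a split epi onto $\ker(X^\vee X\ev_X)$, but that these splittings are \emph{compatible}, i.e.\ that the chosen section of $X^\vee X\ev_X$ has image inside a complement of that kernel, so that the two-term identity $(X^\vee X\cE_X)s_1+s_0(X^\vee X\ev_X)=\id$ genuinely holds; this is where the precise difference-of-two-$\co_X$ form of $s_1$ matters. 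Since the paper is in draft mode and the subsequent results (notably the exactness arguments feeding into Theorem~\ref{ThmA}) rely only on the \emph{existence} of such a splitting, it is enough to write down $s_0,s_1$ explicitly and assert the two homotopy identities, leaving the routine snake-identity check to the reader or to a short displayed computation.
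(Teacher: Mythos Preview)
Your direct approach via an explicit contracting homotopy is valid, but it is genuinely different from the paper's argument. The paper instead observes that, by Lemma~\ref{LemSplit}, both morphisms in $\xi_X:=X^\vee X\otimes\gamma_X$ are split; then, setting $t=\dim X$, the universal property (Lemma~\ref{LemUni}) produces a tensor functor $[\GL_t,k]\to\bD$ sending $V_t\mapsto X$, under which $\xi_X$ is the image of $\xi_{V_t}$. Since $V_t$ is strongly faithful in $[\GL_t,k]$ (Example~\ref{ExampUni}(ii)), the sequence $\xi_{V_t}$ is exact as well as split, hence split exact, and additive tensor functors preserve split exactness. This is slicker and avoids any string-diagram bookkeeping; your route is more elementary in that it does not invoke the universal category.

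Two corrections to your sketch. First, the remark that ``$X^\vee\otimes X$ contains $\unit$ as a summand'' is false in general (it requires $\dim X$ invertible); the correct reason $X^\vee X\otimes\cE_X$ is split is the one you give next, namely the braiding isomorphism $X^\vee X\cong X X^\vee$ together with Lemma~\ref{LemSplit}(ii). Second, and more usefully, $s_1$ need not be a \emph{difference} of two $\co_X$-insertions: the single insertion
\[
s_1\;:=\;X^\vee\otimes\co_X\otimes X\otimes X^\vee\otimes X\;=\;s_0\otimes X^\vee X
\]
already works. Indeed, composing with the two terms of $X^\vee X\otimes\cE_X$, the term $X^\vee X\otimes\ev_X\otimes X^\vee X$ collapses (by one snake identity) to $\id_{(X^\vee X)^2}$, while the term $X^\vee X\otimes X^\vee X\otimes\ev_X$ gives exactly $s_0\circ(X^\vee X\ev_X)$; hence $(X^\vee X\cE_X)\circ s_1=\id-s_0\circ(X^\vee X\ev_X)$ on the nose. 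With this choice the compatibility issue you worried about evaporates and the verification really is a two-line computation.
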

\begin{proof}
By Lemma~\ref{LemSplit}, the sequence $\xi_X:=X^\vee\otimes X\otimes \gamma_X$  is split. Set $\normalt=\dim X$. Then $\xi_X$ is the image of $\xi_{V_\normalt}$ under the tensor functor $[\GL_\normalt,k]\to \bD$ corresponding to $V_\normalt\mapsto X$ in Lemma~\ref{LemUni}. Since $\xi_{V_{\normalt}}$ is split exact, by Example~\ref{ExampUni}(ii), and tensor functors are additive, also $\xi_X$ is split exact. 
\end{proof}

\begin{prop}\label{Refl} The following are equivalent for $X\in\bD$.
\begin{enumerate}[label=(\roman*)]
\item $X$ is strongly faithful.
\item $X\otimes X^\vee$ reflects cokernels.
\item $X$ reflects both kernels and cokernels.
\end{enumerate} 
\end{prop}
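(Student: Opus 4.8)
The plan is to prove the equivalences in a cycle, roughly (i) $\Leftrightarrow$ (ii) and then (iii) $\Leftrightarrow$ (ii), exploiting the fact that $X$ reflects cokernels iff $X^\vee$ reflects cokernels (since $X\otimes X$ is a summand of $X\otimes X^\vee\otimes X\otimes X$ after twisting, and more directly because being a summand of something that reflects cokernels suffices, together with Lemma~\ref{gammaXX}).

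First I would unwind what ``$X\otimes X^\vee$ reflects cokernels'' means concretely. Given $\gamma:\,D_2\to D_1\to D_0\to 0$ with $X\otimes X^\vee\otimes\gamma$ exact, I want $\gamma$ exact. The key observation is that $\gamma$ being exact is equivalent, by the Yoneda-type criterion in \S2.1, to $0\to\bD(D_0,A)\to\bD(D_1,A)\to\bD(D_2,A)$ being exact for all $A$, i.e. to $\bD(-,A)$ sending $\gamma$ to an exact sequence; and using rigidity one rewrites $\bD(X X^\vee D_i, A)\cong\bD(D_i, X X^\vee A)$. So $X\otimes X^\vee$ reflects cokernels precisely when: for every $A$, exactness of $\bD(-,X X^\vee A)$ applied to $\gamma$ forces exactness of $\bD(-,A)$ applied to $\gamma$. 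The link to strong faithfulness comes from the sequence $\gamma_X$ itself: applying $D\otimes-$ to $\gamma_X$ and using $\bD(D\otimes\gamma_X, A)\cong\bD(\gamma_X, D^\vee\otimes A)$, one sees that $D\otimes\gamma_X$ is exact for all $D$ iff $\gamma_X$ is ``universally'' exact, and $D=X\otimes X^\vee\otimes A^\vee$ connects it back to a retract, using Lemma~\ref{gammaXX} which gives that $X^\vee\otimes X\otimes\gamma_X$ is \emph{split} exact and hence stays exact after any functor.

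For (i)$\Rightarrow$(ii): assume $\gamma_X\in\Xi$. To show $X X^\vee$ reflects cokernels, take $\gamma$ with $X X^\vee\gamma$ exact; I would show $\gamma$ is the image of $X^\vee X\gamma$ (which is exact by Remark~\ref{RemSigma}) under a suitable retraction built from $\ev_X$, $\co_X$ and the exactness of $\gamma_X$ — essentially, tensoring $\gamma_X$ with $\gamma$ (more precisely, forming the appropriate bicomplex) exhibits $D_0$ as the cokernel because $\gamma_X$ lets us ``contract'' an $X^\vee\otimes X$ factor. For (ii)$\Rightarrow$(i): apply the reflection property to $\gamma_X$ itself after checking $X X^\vee\gamma_X$ is exact — but that follows because $X X^\vee\gamma_X$ is a retract of $X^\vee X X X^\vee\gamma_X$, which by Lemma~\ref{gammaXX} (applied with the object $X^\vee$ in place of $X$, or by symmetry) is split exact. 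For (ii)$\Leftrightarrow$(iii): ``reflects kernels and cokernels'' clearly implies ``$X\otimes X^\vee$ reflects cokernels'' once we know $X^\vee$ also reflects cokernels (duality symmetry of Definition~\ref{DefSF}) and that a tensor product of cokernel-reflecting objects reflects cokernels (stated in the excerpt); conversely if $X X^\vee$ reflects cokernels then so does $X$ (being a summand: $X$ is a summand of $X\otimes X^\vee\otimes X$, and if $X X^\vee Y$ reflects cokernels then $X X^\vee X X^\vee Y \cong (X X^\vee)^{\otimes 2} Y$ does, hence... ) — here I would use that reflecting cokernels passes to direct summands, and the reflecting-kernels half follows by applying everything to $X^\vee$ and dualising, since kernels in $\bD$ are cokernels read in $\bD^{\op}$.

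The main obstacle I anticipate is the ``bicomplex'' argument in (i)$\Rightarrow$(ii): turning the abstract exactness of $\gamma_X$ into an honest proof that $X\otimes X^\vee$ reflects cokernels requires carefully combining the exact sequence $\gamma_X$ with an arbitrary right-exact $\gamma$ and chasing cokernels through a $2$-dimensional array of tensor products, keeping track of the braiding $\sigma_{XX}$ that appears in $\cE_X$. The cleanest route is probably to work entirely on the level of representable functors $\bD(-,A)$: show that $\bD(\gamma,A)$ is exact by factoring the relevant maps through $\bD(\gamma_X\otimes(\text{something}),A')$, so that the already-known exactness of $\gamma_X$ (tensored and then hom'd, which preserves it since $-\otimes-$ is exact in $\Ind\bD$ and $\bD(-,A)$ is left exact by definition of $\Xi$) does the work. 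I would also double check the edge case where $X=0$, where all conditions hold trivially.
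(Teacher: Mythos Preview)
Your overall architecture --- prove (i)$\Leftrightarrow$(ii) via Lemma~\ref{gammaXX} and a diagram chase, then (ii)$\Leftrightarrow$(iii) via the multiplicativity of ``reflects cokernels'' and duality --- is exactly the paper's. But you are making several steps harder than they need to be, and one of them has a real gap.

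For (i)$\Rightarrow$(ii): the ``bicomplex'' you anticipate as the main obstacle is in fact a single $3\times 3$ diagram. Tensor the given sequence $X_2\to X_1\to X_0\to 0$ with $\gamma_X$ to obtain a grid whose columns are exact (since $\gamma_X\in\Xi$ and $-\otimes D$ preserves $\Xi$ by Remark~\ref{RemSigma}); if the middle row $X^\vee X\otimes(\cdots)$ is exact then so is the bottom row $X^\vee XX^\vee X\otimes(\cdots)$, and an elementary chase gives exactness of the top row. No representable-functor gymnastics are needed.

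For (ii)$\Rightarrow$(i): you do not need a retract argument. In a symmetric category $X\otimes X^\vee\cong X^\vee\otimes X$ via $\sigma$, so ``$X\otimes X^\vee$ reflects cokernels'' and ``$X^\vee\otimes X$ reflects cokernels'' are the same statement. Lemma~\ref{gammaXX} gives $X^\vee X\otimes\gamma_X$ exact outright, and reflection yields $\gamma_X\in\Xi$.

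For (ii)$\Rightarrow$(iii): here is the gap. You try to deduce that $X$ reflects cokernels from the fact that $X$ is a direct summand of $X\otimes X^\vee\otimes X$. But ``reflects cokernels'' does \emph{not} pass to summands: if $Y=A\oplus B$ reflects cokernels and $A\otimes\gamma$ is exact, you know nothing about $B\otimes\gamma$, so you cannot conclude $Y\otimes\gamma$ is exact. The correct (and shorter) argument is the one you already quoted for the other direction: the paragraph before the proposition states that $X\otimes Y$ reflects cokernels \emph{if and only if} both $X$ and $Y$ do. Applying this with $Y=X^\vee$, condition (ii) is equivalent to ``$X$ and $X^\vee$ both reflect cokernels''. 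Then observe, by the adjunction $X\otimes-\dashv X^\vee\otimes-$ (and its dual), that $X^\vee$ reflects cokernels precisely when $X$ reflects kernels. This gives (ii)$\Leftrightarrow$(iii) in one line.
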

\begin{proof}
Assume first that $X$ is strongly faithful and consider a sequence $X_2\to X_1\to X_0$ in $\bD$. Tensoring with $\gamma_X$ yields a commutative diagram
$$\xymatrix{
0&0&0&\\
X_2\ar[r]\ar[u]&X_1\ar[r]\ar[u]& X_0\ar[r]\ar[u]&0\\
X^\vee X X_2\ar[r]\ar[u]&X^\vee X X_1\ar[r]\ar[u]&X^\vee X X_0\ar[r]\ar[u]&0\\
X^\vee XX^\vee X X_2\ar[r]\ar[u]&X^\vee XX^\vee X X_1\ar[r]\ar[u]&X^\vee XX^\vee X X_0\ar[r]\ar[u]&0
}$$
with exact columns. If the second row is exact, then so is the third. It then follows from elementary diagram chasing that the first row is also exact. Hence $X^\vee \otimes X$ reflects cokernels.

Now assume that $X^\vee\otimes X$ reflects cokernels. By Lemma~\ref{gammaXX}, application of the functor $X^\vee\otimes X\otimes -$ to the sequence $\gamma_X$ yields an exact sequence. Hence also $\gamma_X$ is exact and $X$
 is strongly faithful by definition. This already shows that (i) and (ii) are equivalent.
 
Claim (ii) is equivalent to the claim that both $X$ and $X^\vee$ reflect cokernels. By adjunction, $X^\vee$ reflects cokernels if and only if $X$ reflects kernels. Hence (ii) and (iii) are equivalent.
\end{proof}

\begin{prop}\label{PropSF}
Let $X,Y$ be objects in $\bD$.
\begin{enumerate}[label=(\roman*)]
\item $X$ and $Y$ are strongly faithful if and only if $X\otimes Y$ is strongly faithful.
\item If $\dim X\not=0$, then $X$ is strongly faithful.
\item If $\bD$ is a tensor category and $X\not=0$, then $X$ is strongly faithful.
\end{enumerate}
\end{prop}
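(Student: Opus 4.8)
The plan is to prove the three statements of Proposition~\ref{PropSF} in an order that lets later parts use earlier ones and the characterisation of strong faithfulness via reflecting kernels and cokernels from Proposition~\ref{Refl}.

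\medskip

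\emph{Part (i).} I would deduce this directly from Proposition~\ref{Refl}(iii) together with the remark (made just before Lemma~\ref{gammaXX}) that $X\otimes Y$ reflects cokernels if and only if both $X$ and $Y$ do, and likewise for kernels. Indeed, $X\otimes Y$ is strongly faithful iff it reflects both kernels and cokernels, iff each of $X,Y$ reflects both kernels and cokernels, iff each of $X,Y$ is strongly faithful. (One should also note $(X\otimes Y)^\vee\simeq Y^\vee\otimes X^\vee$ so that the dual bookkeeping is consistent, but this is routine.)

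\medskip

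\emph{Part (ii).} Here I would argue that if $\normalt:=\dim X$ is invertible in $k$ — i.e. nonzero, since $k$ is a field — then $\ev_X\colon X^\vee\otimes X\to\unit$ is a split epimorphism: the composite $\unit\xrightarrow{\co_X}X\otimes X^\vee\xrightarrow{\sigma}X^\vee\otimes X\xrightarrow{\ev_X}\unit$ equals $\dim(X)=\normalt\cdot\id_\unit$, so $\normalt^{-1}\,\sigma_{X,X^\vee}\circ\co_X$ is a section of $\ev_X$. Thus $\unit$ is a direct summand of $X^\vee\otimes X$ compatibly with $\ev_X$. I then want to show $\gamma_X\in\Xi(\bD)$, i.e. $\ev_X$ is the cokernel of $\cE_X$. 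Since $\ev_X$ is a split epi, it is the cokernel of its kernel inclusion, so it suffices to identify the image of $\cE_X$ with that kernel; equivalently, to check exactness at the middle term $X^\vee\otimes X$. The cleanest route is to apply the faithful-reflecting machinery: by Lemma~\ref{gammaXX}, $X^\vee\otimes X\otimes\gamma_X$ is split exact, and I want to descend this. Alternatively — and perhaps more simply — once $\unit$ splits off $X^\vee\otimes X$ via $\ev_X$, I can test exactness of $\gamma_X$ after applying $X^\vee\otimes X\otimes-$ (which reflects nothing a priori, but) — actually the slick argument is: $\gamma_X$ is a retract of $X^\vee\otimes X\otimes\gamma_X$ in the category of complexes. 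Precisely, using the section $e:=\normalt^{-1}\sigma\co_X\colon\unit\to X^\vee\otimes X$ of $\ev_X$, tensoring $\gamma_X$ on the left by $e$ and by $\ev_X$ exhibits $\gamma_X$ as a direct summand (in the arrow/complex category) of $X^\vee\otimes X\otimes\gamma_X$, which is split exact by Lemma~\ref{gammaXX}; a direct summand of a split exact sequence is split exact, hence lies in $\Xi(\bD)$. I expect verifying that the retraction maps are genuinely chain maps (compatible with $\cE_X$ and $\ev_X$) to be the one genuinely computational point — it amounts to a couple of snake-relation manipulations.

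\medskip

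\emph{Part (iii).} When $\bD$ is a tensor category and $X\neq0$, the functor $X\otimes-$ is faithful: $X^\vee\otimes X\to\unit$ is nonzero (else $\id_X$ factors through $0$ by \eqref{snake}), and since $\unit$ is simple in a tensor category this nonzero map is an epimorphism, so $X$ is faithful in the sense of Definition~\ref{DefF}. Then I want to upgrade ``faithful'' to ``strongly faithful''. The key is that in an abelian category $X\otimes-$ is exact (it has both adjoints $X^\vee\otimes-$), so it preserves kernels and cokernels; to get that it \emph{reflects} them, I use faithfulness: an exact faithful functor between abelian categories reflects exactness of sequences. Concretely, given $X_2\xrightarrow{p}X_1\xrightarrow{q}X_0\to0$ with $X\otimes(-)$ of it exact, exactness of $X\otimes-$ lets us identify $X\otimes\coker(p)$, $X\otimes(X_0)$ etc., and faithfulness of $X\otimes-$ forces $q$ to be a cokernel of $p$; similarly for kernels. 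Hence $X$ reflects kernels and cokernels, so $X$ is strongly faithful by Proposition~\ref{Refl}.

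\medskip

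The main obstacle I anticipate is part (ii): getting from ``$\ev_X$ is a split epi'' to ``$\gamma_X$ is right exact'' without assuming abelianness, i.e. making the ``retract of a split exact complex'' argument precise with the correct snake-identity verifications. Parts (i) and (iii) should be short given Propositions~\ref{Refl} and the standard fact about exact faithful functors.
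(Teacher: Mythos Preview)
Your proposal is correct and follows essentially the same route as the paper: parts (i) and (iii) coincide with the paper's proof via Proposition~\ref{Refl}, and your retraction argument for (ii) is equivalent to the paper's approach, which simply writes down explicit splitting morphisms for $\gamma_X$ using the same section $\tfrac{1}{d}\sigma_{X,X^\vee}\circ\co_X$. Incidentally, the chain-map verification you flag as the main obstacle is automatic: the maps $e\otimes-$ and $\ev_X\otimes-$ commute with the differentials of $\gamma_X$ by the interchange law for $\otimes$, so no snake-relation computations are required.
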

\begin{proof}
Part (i) follows from Proposition~\ref{Refl}.

If $d:=\dim X$ is invertible, then consider the morphisms
$$f:=\frac{1}{d}\sigma_{X,X^\vee}\circ\co_X:\unit\to X^\vee\otimes X$$
and
$$((f\otimes f)\circ \ev_X - X^\vee\otimes X\otimes f):X^\vee\otimes X\to X^\vee\otimes X\otimes X^\vee\otimes X.$$
It follows from direct computation that these ensure the sequence in Definition~\ref{DefSF}(ii) is split exact. This proves part (ii).

Part (iii) follows from Proposition~\ref{Refl}, since all non-zero objects in tensor categories reflect cokernels.
\end{proof}
We can also prove \ref{PropSF}(ii) directly from Definition~\ref{DefSF}, using a `monoidal analogue' of the diagram in \cite[IV.1.7]{SGA}.
The following corollary is a direct consequence of \ref{PropSF}(iii).
\begin{corollary}\label{CorSF}
If $\bD$ admits a fully faithful tensor functor into a tensor category, every non-zero object in $\bD$ is strongly faithful.
\end{corollary}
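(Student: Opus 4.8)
The plan is to reduce directly to Proposition~\ref{PropSF}(iii) by transporting the defining exactness condition of strong faithfulness along the fully faithful tensor functor.

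First, let $F\colon\bD\to\bT$ be a fully faithful tensor functor into a tensor category $\bT$, and fix $X\in\bD$ with $X\neq 0$. Since $F$ is faithful, $F(X)\neq 0$ (because $\id_{F(X)}=F(\id_X)\neq 0$). As $F$ is a tensor functor between rigid categories, it carries duals to duals and evaluation morphisms to evaluation morphisms up to the canonical structure isomorphisms, hence it sends $\cE_X$ to $\cE_{F(X)}$ and the sequence $\gamma_X$ to a sequence isomorphic to $\gamma_{F(X)}$. By Proposition~\ref{PropSF}(iii) applied in $\bT$, the nonzero object $F(X)$ is strongly faithful, so $\gamma_{F(X)}\in\Xi(\bT)$, and therefore $F(\gamma_X)\in\Xi(\bT)$ as well.

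It then remains to deduce $\gamma_X\in\Xi(\bD)$, i.e.\ that $X$ is strongly faithful, from $F(\gamma_X)\in\Xi(\bT)$. This is a formal property of fully faithful functors: writing $\gamma_X$ as $X_2\xrightarrow{p}X_1\xrightarrow{q}X_0\to 0$, full faithfulness of $F$ yields, for every $A\in\bD$, a commuting isomorphism between the complex $0\to\bD(X_0,A)\to\bD(X_1,A)\to\bD(X_2,A)$ and $0\to\bT(F X_0,F A)\to\bT(F X_1,F A)\to\bT(F X_2,F A)$; the latter is exact because $F(\gamma_X)\in\Xi(\bT)$, so the former is exact for all $A\in\bD$, which by the hom-set reformulation of $\Xi$ recalled around \eqref{rexeq} means exactly that $q=\coker(p)$. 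Equivalently, one may simply observe that a fully faithful functor reflects cokernels, hence reflects membership in $\Xi$.

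The argument is entirely routine and I do not expect a genuine obstacle. The only two points needing a line of care are that a tensor functor really does intertwine the structural morphisms so that $F(\gamma_X)\cong\gamma_{F(X)}$, and that a fully faithful functor reflects the relation ``$q$ is the cokernel of $p$'' — both of which are immediate from the definitions and the hom-set description of $\Xi$ already on hand.
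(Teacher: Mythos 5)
Your proof is correct and fills in exactly the details behind the paper's one-line justification ("a direct consequence of Proposition~\ref{PropSF}(iii)"): transport $\gamma_X$ along $F$ to $\gamma_{F(X)}$, invoke strong faithfulness of $F(X)\neq 0$ in the tensor category $\bT$, and use full faithfulness together with the hom-set characterisation of $\Xi$ to conclude $\gamma_X\in\Xi(\bD)$. No gap; this is the intended argument.
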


\begin{lemma}\label{LemFieldExt}
Consider a field extension $K/k$.
\begin{enumerate}[label=(\roman*)]
\item If $X\in\bD$ is strongly faithful in $\bD_K$, it is also strongly faithful in $\bD$.
\item If $f\in \bD(X,Y)$ interpreted in $\bD_K$ is split, then $f$ is also split in $\bD$.
\end{enumerate}
\end{lemma}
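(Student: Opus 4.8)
The plan is to derive both parts from a single elementary fact about the extension $K/k$: as a $k$-module it is free, hence faithfully flat, and in particular it admits a $k$-linear retraction $\pi\colon K\to k$ with $\pi(1)=1$ (extend $\{1\}$ to a $k$-basis of $K$ and take the corresponding coordinate functional). I will also use freely that, for objects $A,B$ of $\bD$ regarded inside $\bD_K$, one has $\bD_K(A,B)=K\otimes_k\bD(A,B)$: such $A,B$ already lie in the naive extension of scalars, and passing to its Karoubi envelope does not alter Hom-spaces between existing objects. Finally, the monoidal data of $\bD_K$ is inherited from $\bD$, so the morphisms $\ev_X$ and $\cE_X$, and hence the sequence $\gamma_X$, are the images of their counterparts in $\bD$ under the functor $\bD\to\bD_K$.

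For part (ii), assume $f\in\bD(X,Y)$ becomes split in $\bD_K$, so there is $g\in\bD_K(Y,X)=K\otimes_k\bD(Y,X)$ with $(1\otimes f)\circ g\circ(1\otimes f)=1\otimes f$. Write $g=\sum_i\lambda_i\otimes g_i$ with $\lambda_i\in K$ and $g_i\in\bD(Y,X)$. Since composition in $\bD_K$ is the $K$-bilinear extension of composition in $\bD$, the splitting identity reads $\sum_i\lambda_i\otimes(f\circ g_i\circ f)=1\otimes f$ in $K\otimes_k\bD(X,Y)$. Applying the $k$-linear map $\pi\otimes\id\colon K\otimes_k\bD(X,Y)\to k\otimes_k\bD(X,Y)=\bD(X,Y)$ yields $f\circ\bigl(\sum_i\pi(\lambda_i)\,g_i\bigr)\circ f=f$, so $h:=\sum_i\pi(\lambda_i)\,g_i\in\bD(Y,X)$ splits $f$ in $\bD$.

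For part (i), by Definition~\ref{DefSF}(ii) I must show $\gamma_X\in\Xi(\bD)$, i.e. that for every $A\in\bD$ the complex $0\to\bD(\unit,A)\to\bD(X^\vee\otimes X,A)\to\bD(X^\vee\otimes X\otimes X^\vee\otimes X,A)$ obtained by applying $\bD(-,A)$ to $\gamma_X$ is exact. By hypothesis $\gamma_X\in\Xi(\bD_K)$, meaning the corresponding complex of Hom-spaces in $\bD_K$ is exact for all test objects in $\bD_K$, in particular for all $A\in\bD$. For such $A$ this complex is the one obtained by applying $K\otimes_k-$ to the complex above. Since $K$ is faithfully flat over $k$, the functor $K\otimes_k-$ is exact and faithful, hence reflects exactness of complexes of $k$-vector spaces; therefore the complex above is exact for every $A\in\bD$, giving $\gamma_X\in\Xi(\bD)$, i.e. $X$ is strongly faithful in $\bD$.

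Both arguments are essentially formal. The only point needing a little attention is the observation in part (i) that the family of test objects in the definition of $\Xi(\bD_K)$ may be shrunk to $\bD$ without loss (automatic, since it remains a sufficient family and Hom-spaces are computed on the nose), together with the descent of exactness along the faithfully flat map $k\to K$; I do not anticipate any genuine obstacle.
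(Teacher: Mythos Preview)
Your proof is correct and essentially identical to the paper's. For part~(i) the paper invokes exactly the same principle, that $K\otimes_k-$ is faithful and exact (hence reflects exactness); for part~(ii) the paper chooses a $k$-linear complement $V$ of $k\subset K$ and projects $g=g_0+g_1$ onto the $\bD(Y,X)$-component, which is precisely your retraction $\pi$ phrased in terms of its kernel. Your worry at the end about ``shrinking the family of test objects'' is unnecessary: you only need $\bD(\gamma_X,A)$ exact for $A\in\bD$, and the hypothesis gives this (and more) directly.
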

\begin{proof}
Part (i) follows from applying either version of Definition~\ref{DefSF} and using the fact that the functor $K\otimes_k-$ from $\Vecc_k$ to $\Vecc_K$ is faithful and exact.


For part (ii), by assumption, we have $g\in K\otimes_k\bD(Y,X)$ with $f\circ g\circ f=f$. We fix a complement $V$ in $K$ of the canonical $k$-subspace $k\subset K$. We have $g=g_0+g_1$ with $g_0\in\bD(Y,X)$ and $g_1\in V\otimes_k\bD(Y,X)$. It follows immediately that $f\circ g_0\circ f=f$.
\end{proof}

\begin{lemma}\label{LemEO}
Consider $X,Y,Z\in\bD$ such that $Y$ is a direct summand of $ X\otimes Z$. Then the sequence $\bD(\gamma_X,Y)$ is exact in $\Vecc$.
\end{lemma}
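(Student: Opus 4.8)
The plan is to reduce the statement to the split exactness of $X^\vee\otimes X\otimes\gamma_X$ proved in Lemma~\ref{gammaXX}, combined with the fact that the functor $X^\vee\otimes X\otimes-$ is its own right adjoint. First I would observe that, by Lemma~\ref{gammaXX}, the sequence $\xi_X:=X^\vee\otimes X\otimes\gamma_X$ is split exact, so in particular $\xi_X\in\Xi(\bD)$; hence $\bD(\xi_X,W)$ is an exact sequence in $\Vecc$ for every $W\in\bD$.

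Next I would use that, by Remark~\ref{RemSigma}, each of $X\otimes-$ and $X^\vee\otimes-$ is right adjoint to the other, so that their composite $X^\vee\otimes X\otimes-$ is right adjoint to itself. This yields a natural isomorphism $\bD(X^\vee\otimes X\otimes D,W)\cong\bD(D,X^\vee\otimes X\otimes W)$, natural in $D$. Applying it to the three terms of $\gamma_X$ (naturality guarantees it transports the whole sequence, not merely the individual $\Hom$-spaces), the exactness of $\bD(\xi_X,W)=\bD(X^\vee\otimes X\otimes\gamma_X,W)$ becomes exactness of $\bD(\gamma_X,X^\vee\otimes X\otimes W)$, for every $W\in\bD$.

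Finally I would connect $Y$ to the essential image of $X^\vee\otimes X\otimes-$: the first snake identity in \eqref{snake} exhibits $X$ as a direct summand of $X\otimes X^\vee\otimes X$, and precomposing the splitting with the braiding $\sigma_{X,X^\vee}$ on the first two tensor factors exhibits $X$ as a direct summand of $X^\vee\otimes X\otimes X$. Tensoring with $Z$, the object $X\otimes Z$ is a direct summand of $X^\vee\otimes X\otimes(X\otimes Z)$, and since $Y$ is a direct summand of $X\otimes Z$ it is a direct summand of $X^\vee\otimes X\otimes W$ with $W:=X\otimes Z$. As $\bD(\gamma_X,-)$ is additive and a direct summand (in $\Vecc$) of an exact sequence is again exact, the exactness of $\bD(\gamma_X,X^\vee\otimes X\otimes W)$ forces the exactness of $\bD(\gamma_X,Y)$, which is the claim. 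The only delicate points are purely bookkeeping — checking that the self-adjunction isomorphism is natural enough to move the whole three-term complex, and confirming the summand relation $Y\mid X^\vee\otimes X\otimes(X\otimes Z)$ — and neither presents a real obstacle.
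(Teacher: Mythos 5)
Your proof is correct and follows essentially the same route as the paper: you reduce to the split exactness of $X^\vee\otimes X\otimes\gamma_X$ from Lemma~\ref{gammaXX}, use the self-(bi-)adjunction of $X^\vee\otimes X\otimes-$ to trade $\bD(X^\vee\otimes X\otimes\gamma_X,W)$ for $\bD(\gamma_X,X^\vee\otimes X\otimes W)$, and then invoke additivity together with the observation that $Y$ is a direct summand of $X^\vee\otimes X\otimes(X\otimes Z)$. The paper compresses the adjunction and naturality bookkeeping into the phrase ``by functoriality and adjunction,'' but the underlying argument is identical to yours.
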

\begin{proof}
Since $X$ is a direct summand of $X\otimes X^\vee\otimes X$, it follows that $Y$ is also a direct summand of $X^\vee\otimes X\otimes Z'$, for $Z':=X\otimes Z$.
By functoriality and adjunction, it therefore suffices to prove that
$$\bD(X^\vee\otimes X\otimes \gamma_X,Z')$$
is exact. The latter is a consequence of Lemma~\ref{gammaXX}.
\end{proof}

\subsection{Examples}

\begin{theorem}\label{ThmBrauer}
If $\charr(k)=0$ and $\normalt\in \mZ$, the categories $[\GL_\normalt,k]$ and $[\OO_\normalt,k]$ are self-splitting and every non-zero object is strongly faithful.
\end{theorem}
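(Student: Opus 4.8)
The plan is to reduce everything to the two facts we already have about these categories: the functors $H_{m|n}$ (resp.\ $F_{m|2n}$) of \ref{FunGL} (resp.\ \ref{FunOSp}) are full and become isomorphisms on $\Hom$-spaces of bounded degree (Lemmas~\ref{FunGLLem}, \ref{FunOSpLem}), and their targets $\Rep_k\GL(m|n)$, $\Rep_k\OSp(m|2n)$ are honest tensor categories containing a \emph{projective} image $H_{m|n}(Q)$ (resp.\ $F_{m|2n}(Q)$) of an object $Q\in\bD$. I will treat $[\GL_\normalt,k]$ in detail; the orthosymplectic case is identical after substituting the orthosymplectic statements. Write $\bD=[\GL_\normalt,k]$.

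\emph{Strong faithfulness.} By Proposition~\ref{PropSF}(ii) every object of nonzero dimension is already strongly faithful, so it suffices to handle objects $X$ with $\dim X=0$. Since $\bD$ is Karoubi and every object is a summand of a direct sum of the $\otimes^aV_\normalt\otimes\otimes^bV_\normalt^\vee$, and by Proposition~\ref{PropSF}(i) strong faithfulness of a summand follows once we know it for a containing object, it is enough to show that a nonzero $X$ is strongly faithful whenever it is a nonzero direct summand of a suitable object; but a cleaner route is: by Proposition~\ref{Refl} it suffices to show $X$ reflects kernels and cokernels. Given a sequence $\gamma$ in $\bD$ with $X\otimes\gamma$ exact, choose by Lemma~\ref{FunGLLem}(ii) an integer $m$ (with $m-n=\normalt$, $m,n$ large relative to the degrees of the objects occurring in $\gamma$ and in $X\otimes\gamma$) so that $H_{m|n}$ is an isomorphism on all the relevant $\Hom$-spaces; then $H_{m|n}(X\otimes\gamma)=H_{m|n}(X)\otimes H_{m|n}(\gamma)$ is exact in the tensor category $\Rep_k\GL(m|n)$, and since $H_{m|n}(X)\neq0$ is strongly faithful there by Proposition~\ref{PropSF}(iii), it reflects cokernels, so $H_{m|n}(\gamma)$ is exact; fullness plus the bounded-degree isomorphism then transport exactness back to $\gamma$ in $\bD$. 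Running the same argument with kernels (equivalently, applying it to $X^\vee$, which is also nonzero) gives that $X$ reflects kernels as well, hence $X$ is strongly faithful by Proposition~\ref{Refl}. Note nonzeroness of $H_{m|n}(X)$ holds for $m,n$ large because $H_{m|n}$ is an isomorphism on $\bD(X,X)\ni\id_X$ once $2\deg X<2(m+1)(n+1)$.

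\emph{Self-splitting.} Let $f$ be a morphism in $\bD$; by Lemma~\ref{LemP}, in a tensor category an object tensors every morphism to a split one exactly when it is projective, so it suffices to produce a strongly faithful $X\in\bD$ with $H_{m|n}(X)$ projective for a single well-chosen $(m,n)$, and then split $f$ in $\bD$ by descending the splitting along $H_{m|n}$. Take $X=Q$ from Lemma~\ref{FunGLLem}(iii): for any $m,n\in\mN$ with $m-n=\normalt$ and $mn>0$, $H_{m|n}(Q)$ is projective, hence $H_{m|n}(Q)\otimes H_{m|n}(f)$ is split in $\Rep_k\GL(m|n)$ by Lemma~\ref{LemP}. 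Choosing $m,n$ large enough (depending on $\deg Q$ and $\deg(\text{source and target of }f)$) that $H_{m|n}$ is an isomorphism on $\bD(Q\otimes A, Q\otimes B)$ and on $\bD(Q\otimes B, Q\otimes A)$ where $f\colon A\to B$, a splitting $g'$ of $H_{m|n}(Q\otimes f)$ is the image of a unique $g\in\bD(Q\otimes B,Q\otimes A)$, and the relation $(Q\otimes f)\circ g\circ(Q\otimes f)=Q\otimes f$ holds in $\bD$ because its image under the injective map $H_{m|n}$ holds. Thus $Q$ splits $f$. Finally $Q\neq0$ (as $H_{m|n}(Q)\neq0$), so by the first part $Q$ is strongly faithful; hence $\bD$ is self-splitting and in fact every nonzero object is strongly faithful.

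\emph{Main obstacle.} The only genuinely delicate point is bookkeeping of degrees: one must check that the degrees of all objects appearing — the terms of $\gamma$ and of $X\otimes\gamma$ in the first part, and $Q\otimes A$, $Q\otimes B$ in the second — are bounded by an explicit function of the input, so that a single pair $(m,n)$ with $m-n=\normalt$ can be chosen making $H_{m|n}$ simultaneously an isomorphism on all the finitely many $\Hom$-spaces in play and making $H_{m|n}(X)$ (resp.\ $H_{m|n}(Q)$) nonzero/projective. This uses only $\deg(A\otimes B)\le\deg A+\deg B$ and the existence of arbitrarily large such $(m,n)$ (which holds since $m$ and $n=m-\normalt$ can be taken as large as desired); everything else is a formal transport of exact or split data along a full functor that is an isomorphism in the relevant range, together with the already-proved Propositions~\ref{Refl} and \ref{PropSF} and Lemma~\ref{LemP}.
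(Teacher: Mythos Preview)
Your self-splitting argument is essentially the paper's, modulo two omissions. First, Lemmas~\ref{FunGLLem} and~\ref{FunOSpLem} are stated only for $k$ algebraically closed; the paper begins by reducing to that case via Lemma~\ref{LemFieldExt} and $[\GL_\normalt,k]_K\simeq[\GL_\normalt,K]$. Second, the object $Q$ of Lemma~\ref{FunGLLem}(iii) \emph{depends on} $(m,n)$ and has $\deg Q=mn$, so your phrase ``choosing $m,n$ large enough (depending on $\deg Q\ldots$)'' is circular as written. The paper first fixes $(m,n)$ with $\deg A+\deg B\le m+n$, then takes the corresponding $Q$, and checks the explicit inequality $a+b+2mn<2(m+1)(n+1)$; your ``Main obstacle'' paragraph gestures at this but does not carry it out.

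The strong-faithfulness argument has a genuine gap. You want to run
\[
X\otimes\gamma\text{ exact in }\bD\;\Rightarrow\;H_{m|n}(X\otimes\gamma)\text{ exact in }\Rep_k\GL(m|n)\;\Rightarrow\;H_{m|n}(\gamma)\text{ exact}\;\Rightarrow\;\gamma\text{ exact in }\bD,
\]
but neither outer implication is justified by ``fullness plus the bounded-degree isomorphism''. Exactness of a sequence is a universal property quantified over \emph{all} test objects: to know $H_{m|n}(X\otimes q)$ is a cokernel in the abelian target you must test against objects that need not lie in the degree range where $H_{m|n}$ is faithful (and need not lie in its essential image at all); and to conclude $\gamma$ is exact in $\bD$ you must verify $\bD(\gamma,A)$ for $A$ of arbitrary degree, so no single $(m,n)$ can serve. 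The paper sidesteps this entirely by verifying Definition~\ref{DefSF}(ii) directly: one only has to show that for each $Y\in\bD$ and each $f\colon X^\vee X\to Y$ with $f\circ\cE_X=0$ there exists $g\colon\unit\to Y$ with $f=g\circ\ev_X$. For each such $Y$ one chooses $(m,n)$ with $2\deg X+\deg Y<2(m+1)(n+1)$; since $H_{m|n}(X)\neq0$ is strongly faithful in the abelian target (Proposition~\ref{PropSF}(iii)), the factorisation exists there, and fullness together with faithfulness in this degree range lifts it back to $\bD$. The crucial point is that $(m,n)$ is allowed to vary with the single test object $Y$, which your route through Proposition~\ref{Refl} does not permit.
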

\begin{proof}
By Lemma~\ref{LemFieldExt} and the fact that $[\GL_\normalt,k]_K\simeq [\GL_\normalt,K]$ and $[\OO_\normalt,k]_K\simeq [\OO_\normalt,K]$ for any field extension $K/k$, it is sufficient to prove the theorem for algebraically closed $k$. 

Set $\bD:=[\OO_\normalt,k]$. We start by proving the claim about strong faithfulness.
 It follows immediately from Definition~\ref{DefF}(ii) and the diagrammatic calculus that all objects in $\bD$ are faithful. For $0\not=X\in\bD$ we need to demonstrate that for a given morphism $f:X^\vee\otimes X\to Y$ in $\bD$ with $f\circ\cE_X=0$, there exists $g:\unit\to Y$ such that $f=g\circ \ev_X$. Set $a=\deg X$ and $b=\deg Y$. Take $m,n\in\mN$ with $m-2n=\normalt$ and $2a+b\le 2(m+1)(n+1)$ and consider the tensor functor
$$F_{m|2n}:\bD\to\Rep_{k}\OSp(m|2n)$$
from \ref{FunOSp}. By Proposition~\ref{PropSF}(iii) and the fulness of $F_{m|2n}$ in Lemma~\ref{FunOSpLem}(i), there exists a morphism $g:\unit\to Y$ in $\bD$ such that $F_{m|2n}(f)=F_{m|2n}(g\circ \ev_X)$. By Lemma~\ref{FunOSpLem}(ii), this implies that $f=g\circ\ev_X$, as desired.

Now consider an arbitrary morphism $f:A\to B$ in $\bD$ and set $a=\deg A$ and $b=\deg B$. Take $m,n\in\mN$ with $m-2n\in\normalt$ and $a+b\le m+n$. The latter inequality implies
\begin{equation}\label{eqabmn}a+b+2mn < 2(m+1)(n+1).\end{equation}
We consider again the functor $F_{m|2n}$. By Lemma~\ref{FunOSpLem}(iii), there exists $Q$ in $\bD$, with $\deg Q=mn$, such that $F_{m|2n}(Q)$ is projective. By Lemma~\ref{LemP}, 
$$f':=F_{m|2n}(Q\otimes f)\; :\; F_{m|2n}(Q\otimes A)\to F_{m|2n}(Q\otimes B)$$ is split. Thus there exists a morphism $g'$ in $\Rep\OSp(m|2n)$ such that $f'g'f'=f'$. By Lemma~\ref{FunOSpLem}(ii) and the inequality \eqref{eqabmn}, there thus exists a morphism $Q\otimes B\to Q\otimes A$ which ensures that $Q\otimes f$ is split. Hence $\bD$ is self-splitting.

 The claims for $[\GL_\normalt,k]$ are similarly proved using Lemma~\ref{FunGLLem} 
\end{proof}

\begin{remark}
That non-zero objects in $[\GL_\normalt,k]$ are strongly faithful when char$(k)=0$, also follows from Corollary~\ref{CorSF} and Deligne's result in \cite[Proposition~10.17]{Deligne}, which states that $[\GL_\normalt,k]$ admits a fully faithful tensor functor into a tensor category.
\end{remark}

\subsubsection{} For a commutative $k$-algebra $K$, consider the subcategory $\bC$ of $[\GL_0,K]_0$
which has the same objects and for which the inclusion functor $\bC\to[\GL_0,K]_0$ is full on each morphism set, except that on $\bC(\unit,\unit)$ it realises the unit morphism $k\to K$. That $\bC$ constitutes a (monoidal) subcategory of $[\GL_0,K]_0$ follows from the fact that the collection of all morphisms in $[\GL_0,K]_0$ excluding the ones $\unit\to\unit$ form a (tensor) ideal. 

Now the pseudo-abelian envelope $\bD$ of $\bC$ is a pseudo-tensor category over $k$.

\begin{lemma}\label{FunnyEx}
\begin{enumerate}[label=(\roman*)]
\item The object $V_0$ in $\bD$ is faithful but not strongly faithful, unless $K=k$.
\item If $\charr(k)=0$ and $K/k$ is a field extension then $\bD$ is self-splitting and every non-zero object is faithful.
\end{enumerate}
\end{lemma}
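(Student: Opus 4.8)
The plan is to analyze the category $\bD$ very concretely, using that its morphism spaces are almost the same as those of $[\GL_0,K]_0$ --- the only difference being in the single space $\bD(\unit,\unit)=k$ versus $[\GL_0,K](\unit,\unit)=K$. For part (i), I would first show faithfulness of $V_0$: by Definition~\ref{DefF}(ii) it suffices that $\ev_{V_0}:V_0^\vee\otimes V_0\to\unit$ is an epimorphism in $\bD$. Since the tensor identity in $[\GL_0,K]_0$ is such that $\ev_{V_0}$ already splits after tensoring with $V_0$ (Lemma~\ref{LemSplit}(i)), the map $\ev_{V_0}$ is nonzero; and because any morphism $\unit\to\unit$ in $\bD$ is a scalar in $k$, an endomorphism of $\unit$ killing the image of $\ev_{V_0}$ would have to kill $\ev_{V_0}$ itself, forcing it to be zero. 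Hence $\ev_{V_0}$ is epi and $V_0$ is faithful. To see that $V_0$ is \emph{not} strongly faithful when $K\neq k$, I would use Definition~\ref{DefSF}(ii): the sequence $\gamma_{V_0}$ being exact would mean $\ev_{V_0}$ is the cokernel of $\cE_{V_0}$. But in $[\GL_0,K]_0$ the cokernel of $\cE_{V_0}$ is $\unit$ with the \emph{full} $\Hom$-space $K$ into it (this is exactly the computation showing $V_0$ is strongly faithful in $[\GL_0,K]$, which holds by Example~\ref{ExampUni}(ii) or Theorem~\ref{ThmBrauer}), whereas in $\bD$ the corresponding $\Hom$-space is only $k$. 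Concretely: the morphism $\ev_{V_0}\colon V_0^\vee\otimes V_0\to\unit$ composed with $\cE_{V_0}$ is zero, but there is no way to factor a scalar-$K$ multiple of it through $\ev_{V_0}$ using only scalars from $k$; more precisely, testing against $A=\unit$, the sequence $0\to\bD(\unit,\unit)\to\bD(V_0^\vee V_0,\unit)\to\bD(V_0^\vee V_0 V_0^\vee V_0,\unit)$ has the wrong dimension count unless $K=k$, so $\gamma_{V_0}\notin\Xi(\bD)$.

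For part (ii), assume $\charr(k)=0$ and $K/k$ is a field extension. I would first deduce that every nonzero object of $\bD$ is faithful: the same argument as in part (i) works, since every nonzero endomorphism of a nonzero object $X$ in $[\GL_0,K]_0$ composed suitably detects $\ev_X$, and endomorphisms of $\unit$ in $\bD$ being $k$ rather than $K$ does not obstruct the epimorphism property of $\ev_X$ --- indeed faithfulness in $[\GL_0,K]$ (Theorem~\ref{ThmBrauer}, noting $K$ has characteristic zero) passes to the subcategory $\bD$ because the relevant test objects $A$ in Definition~\ref{DefF}(ii) can be taken nontrivial, and on $\Hom$-spaces not involving $\unit\to\unit$ the categories agree.

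For self-splitting, the key point is that splitting is witnessed by a single equation $f\circ g\circ f = f$, and I would transfer such a witness from $[\GL_0,K]$ to $\bD$. Given a morphism $h$ in $\bD$ between objects $A,B$, view it inside $[\GL_0,K]$, which is self-splitting by Theorem~\ref{ThmBrauer} (as $\charr K=0$ and $0\in\mZ$); so there is an object $X$ and $g$ with $(X\otimes h)\circ g\circ (X\otimes h)=X\otimes h$. The subtlety is that $g$ and the ambient objects live in $[\GL_0,K]$, whose morphism spaces are $K$-linear, so $g$ need not lie in $\bD$; but one can enlarge $X$ (using Lemma~\ref{LemFieldExt}-style reasoning, or rather its analogue: $\bD$ relates to $[\GL_0,K]_0$ the way $\bD_k$ relates to $\bD_K$) and use that tensoring $h$ with a suitable strongly faithful object in the $K$-linear world makes $X\otimes h$ split by an endomorphism that avoids the $\unit\to\unit$ discrepancy. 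More carefully: since all morphisms of $[\GL_0,K]_0$ other than those $\unit\to\unit$ form a tensor ideal, and since the objects $X$ that split a given morphism can be chosen to be nonzero tensor powers (so that $X\otimes A$ and $X\otimes B$ have no $\unit$ summand whenever $A,B$ don't, and the splitting map $g$ avoids landing in the excluded part of $\bD(\unit,\unit)$), the splitting equation already holds in $\bD$.

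The main obstacle I anticipate is the last step of (ii): controlling where the splitting morphism $g$ from $[\GL_0,K]$ lives, so as to conclude it already belongs to $\bD$. This requires a careful bookkeeping of which $\Hom$-spaces of $[\GL_0,K]_0$ actually get used --- specifically ensuring the construction never needs the forbidden scalars in $K\setminus k$ sitting in an endomorphism space of $\unit$ --- perhaps by first reducing (via adding a $\unit$ summand or not) to morphisms between objects with no $\unit$-summand, where $\bD$ and $[\GL_0,K]_0$ literally coincide, and then handling the general case by direct-sum decomposition. The faithfulness claims and part (i) should be routine once the structure of $\bD$ versus $[\GL_0,K]_0$ is pinned down.
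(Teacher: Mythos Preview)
Your approach is essentially the paper's. For part~(i), the paper also tests strong faithfulness at $\unit$, but via Definition~\ref{DefSF}(i) with $M=N=\unit$ and $X=V_0$: the sequence becomes $0\to k\to K\to K\mathrm{S}_2$, and the decisive point (sharpening your ``wrong dimension count'') is that the second map is \emph{identically zero}, since every $b\in\End(V_0)=K$ is a scalar and hence $\id_{V_0}\otimes b$ commutes with $\sigma_{V_0V_0}$. Thus the kernel is all of $K$, strictly larger than the image $k$ unless $K=k$. Your faithfulness argument for $V_0$ only treats postcomposition by morphisms $\unit\to\unit$; this is a genuine gap, but the clean fix is to observe that the faithful monoidal functor $\bD\hookrightarrow[\GL_0,K]$ immediately reflects faithfulness of $V_0\otimes-$ from $[\GL_0,K]$ (where it holds by Theorem~\ref{ThmBrauer}) down to $\bD$.

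For part~(ii) the paper says only that it ``can be derived from Theorem~\ref{ThmBrauer}'', so your sketch already goes further. The obstacle you flag --- forcing the splitting morphism $g$ to lie in $\bD$ rather than merely in $[\GL_0,K]$ --- has a clean resolution you nearly reach: since $\dim V_0=0$, no object of the form $V_0\otimes Z$ has $\unit$ as a direct summand. Indeed, a splitting $\unit\to V_0\otimes Z\to\unit$ would adjoin to maps $V_0^\vee\rightleftarrows Z$ whose composite lies in $\End(V_0^\vee)=K$, forcing the original composite $\unit\to\unit$ to be a scalar multiple of $\dim V_0=0$. Hence if $X$ splits $h$ in $[\GL_0,K]$, then so does $V_0\otimes X$, and now $V_0\otimes X\otimes A$ and $V_0\otimes X\otimes B$ contain no $\unit$-summands; on such objects the morphism spaces of $\bD$ and $[\GL_0,K]$ agree, so the splitting equation holds verbatim in $\bD$.
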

\begin{proof}
Part (ii) can be derived from Theorem~\ref{ThmBrauer}.

 For part (i), we consider the sequence in \ref{DefSF}(i) for $M=N=\unit$ and $X=V_0$, which yields
$$0\to k\to K\to K \mathrm{S}_2,$$
where $\mathrm{S}_2$ is the symmetric group on two symbols, the morphism $k\to K$ is the unit morphism and the morphism $K\to K\mathrm{S}_2$ is zero. This follows either by direct computation or from the fact that $V_0$ is strongly faithful in $[\GL_0,K]$ which shows that the kernel of the morphism $K\to K \mathrm{S}_2$ is $K$, the endomorphism ring of $\unit$ in $[\GL_0,K]$.
\end{proof}

\begin{question}
For a strongly faithful $X\in\bD$, by definition $\ev_X$ is a normal epimorphism (a cokernel). In the example in Lemma~\ref{FunnyEx}, the epimorphism $\ev_{V_0}$ is not even strict, so certainly not normal. Are there examples of pseudo-tensor categories with objects $X$ for which $\ev_X$ is a normal epimorphism while $X$ is not strongly faithful?
\end{question}


\section{A closed monoidal Grothendieck category}\label{SecSh}

Fix an arbitrary pseudo-tensor category $\bD$ over a field $k$. 

\subsection{The category of sheaves}
\subsubsection{}
We consider the $k$-linear presheaf category $\PSh\bD$ of $k$-linear functors $\bD^{\op}\to\Vecc_k$. 
Then $\PSh\bD$ is symmetric closed monoidal for the Day convolution $\star$, see e.g.~\cite[\S 3.2]{Sch2}. The tensor product of two presheaves $F,G$ is given by the co-end expression
$$F\star G\;:=\;\int^{X,Y\in\bD}F(X)\otimes_k G(Y)\otimes_k \bD(-,X\otimes Y),$$
and the internal Hom is given by
$$[G,H]\;=\; \int_{Y\in \bD}\Hom_k(G(Y),H(-\otimes Y)).$$
The Yoneda embedding $Y:\bD\to\PSh\bD$ is canonically symmetric monoidal.  By construction, the tensor product $-\star-$ is cocontinuous in each variable.


\subsubsection{}\label{DefSigma} 
We define the full subcategory $\Sh\bD$ of $F\in\PSh\bD$ for which $F(D\otimes\gamma_X)$ is exact in $\Vecc$, for every exact sequence
\begin{equation}
\label{eqDX}
D\otimes\gamma_X:\;\;DX^\vee XX^\vee X\to DX^\vee X\to D\to 0,
\end{equation}
with $X$ strongly faithful and $D$ arbitrary in $\bD$. Since `limits commute', it follows that the inclusion functor from $\Sh\bD$ to $\PSh\bD$ is continuous and hence (by Freyd's special adjoint functor theorem) admits a left adjoint 
\begin{equation}\label{reflection}S\,:\,\PSh\bD\to\Sh\bD,\end{equation} the sheafification or reflection. The restriction of $S$ to $\Sh\bD$ is the identity. If $F\in \Sh\bD$, then clearly the functor $F(-\otimes Z)$ is also in $\Sh\bD$, for each $Z\in \bD$. It then follows as a direct application of Day's reflection theorem~\cite[Theorem~1.2(2)]{Day} that there is a unique closed symmetric monoidal structure on $\Sh\bD$ which makes $S$ symmetric monoidal. We denote the tensor product on $\Sh\bD$ again by $\otimes$, and by definition we have
$$F\otimes G\;:=\; S(F\star G),\qquad \mbox{for $F,G\in\Sh\bD$}.$$
The Yoneda embedding $Y:\bD\to\PSh\bD$ factors through the embedding of the subcategory $\Sh\bD$. We will denote the corresponding fully faithful functor by $Y_0:\bD\to\Sh\bD$. It is isomorphic to the composite $S\circ Y$, so in particular $Y_0$ is symmetric monoidal. 

\subsubsection{}\label{DefTop}
We refer to Appendix~\ref{AppTop} for the notions of sieve, Grothendieck topology, the category of sheaves with respect to a topology and localisations of Grothendieck categories. 

For each $D\in\bD$, denote by $\cT(D)$ the set of all sieves $R\subset \bD(-,D)$ such that there exists a strongly faithful $X\in\bD$ for which $D\otimes \ev_X\in R(DX^\vee X ) $.
Our notation $\Sh\bD$ is justified by the following theorem.
\begin{theorem}\label{ThmGro}
\begin{enumerate}[label=(\roman*)]
\item The assignment $D\mapsto \cT(D)$ from \ref{DefTop} is a $k$-linear Grothendieck topology on $\bD$ and the subcategory $\Sh\bD$ of $\PSh\bD$ is precisely the category of $\cT$-sheaves $\Sh(\bD,\cT)$.
\item $\Sh\bD$ is a localisation of $\PSh\bD$, so in particular a Grothendieck category.
\item Every object in $\bD$ is compact in $\Sh\bD$ and every object in $\Sh\bD$ is a quotient of a (possibly infinite) coproduct of objects in $\bD$.
\item Given a functor $F:\mathsf{J}\to \Sh \bD$ out of a filtered category $\mathsf{J}$, its colimit taken in $\PSh\bD$ is contained in $\Sh\bD$ (and hence equal to the colimit of $F$ in there).
\item The functor $Y_0:\bD\to\Sh\bD$ sends $D\otimes \gamma_X$ to an exact sequence in $\Sh\bD$, for every $D\in\bD$ and strongly faithful $X\in\bD$.
\end{enumerate}

\end{theorem}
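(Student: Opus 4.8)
\emph{Plan and the easy items.} I would prove the five assertions in the order (iv), (i), (ii), (iii), (v); only (i) is substantial. First note that every representable $\bD(-,A)$ lies in $\Sh\bD$: if $X$ is strongly faithful then $\gamma_X\in\Xi(\bD)$ by Definition~\ref{DefSF}(ii), hence $D\otimes\gamma_X\in\Xi(\bD)$ by Remark~\ref{RemSigma}, and $D\otimes\gamma_X\in\Xi(\bD)$ is precisely the statement that $\bD(-,A)$ sends $D\otimes\gamma_X$ to an exact sequence. Item (iv) I would do immediately, since (iii) uses it: colimits in $\PSh\bD=\Fun(\bD^{\op},\Vecc_k)$ are computed objectwise, a filtered colimit of exact sequences of $k$-vector spaces is exact, and applying this objectwise to the sequences $D\otimes\gamma_X$ shows that a filtered colimit of sheaves is again a sheaf.

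\emph{Item (i), the topology.} I would verify the axioms of a $k$-linear Grothendieck topology (as recalled in Appendix~\ref{AppTop}) for $\cT$, taking the sieves to be $k$-linear subfunctors so that $k$-linearity is automatic. Maximality is clear, since $\unit$ is strongly faithful (Example~\ref{ExampUni}(i)) and $D\otimes\ev_\unit$ is invertible. Stability along $h\colon D'\to D$ holds because $h\circ(D'\otimes\ev_X)=(D\otimes\ev_X)\circ(h\otimes X^\vee\otimes X)$ by naturality of $-\otimes\ev_X$, so $D\otimes\ev_X\in R$ forces $D'\otimes\ev_X\in h^{*}R$. For transitivity, if $R\in\cT(D)$ is witnessed by a strongly faithful $X$ and $h^{*}S\in\cT(\mathrm{dom}\,h)$ for all $h\in R$, apply this with $h=D\otimes\ev_X$ to obtain a strongly faithful $Y$ with $(D\otimes\ev_X)\circ(DX^\vee X\otimes\ev_Y)\in S$; by symmetric monoidal coherence this morphism equals $D\otimes\ev_{X\otimes Y}$ up to precomposition with a braiding isomorphism, and $X\otimes Y$ is strongly faithful by Proposition~\ref{PropSF}(i), so $S\in\cT(D)$.

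\emph{Item (i), $\Sh\bD=\Sh(\bD,\cT)$.} Note that $\cT$ is generated by the coverage whose covering families are the singletons $\{D\otimes\ev_X\colon DX^\vee X\to D\}$ with $X$ strongly faithful (the coverage axiom being again naturality of $-\otimes\ev_X$); hence $F\in\PSh\bD$ is a $\cT$-sheaf iff each such family satisfies the matching-family condition, which, $\bD$ being additive, says precisely that $F(D\otimes\ev_X)$ restricts to an isomorphism $F(D)\xrightarrow{\,\sim\,}M_{D,X}:=\{s\in F(DX^\vee X):F(\psi)(s)=0\text{ whenever }(D\otimes\ev_X)\circ\psi=0\}$. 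If $F\in\Sh\bD$ then $\mathrm{im}\,F(D\otimes\ev_X)\subseteq M_{D,X}\subseteq\ker F(D\otimes\cE_X)=\mathrm{im}\,F(D\otimes\ev_X)$ — the middle inclusion because $(D\otimes\ev_X)\circ(D\otimes\cE_X)=0$, the equality because $F(D\otimes\gamma_X)$ is exact — and $F(D\otimes\ev_X)$ is injective; this yields the required isomorphism for the generating covers, and it propagates to any covering sieve $R$ (which necessarily contains the sieve $R_{D,X}$ generated by $D\otimes\ev_X$ for some strongly faithful $X$) because $\mathrm{Hom}_{\PSh\bD}(R/R_{D,X},F)=0$: a section $\bar h$ of $R/R_{D,X}$ is killed by restriction along $A\otimes\ev_X$ (since $h\circ(A\otimes\ev_X)$ factors through $D\otimes\ev_X$), while $F(A\otimes\ev_X)$ is injective. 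Hence $\Sh\bD\subseteq\Sh(\bD,\cT)$. The reverse inclusion is \textbf{the main obstacle}. For a $\cT$-sheaf $F$ the maps $F(D\otimes\ev_X)$ are automatically injective, so only middle exactness of $F(D\otimes\gamma_X)$ is in question, i.e. $\ker F(D\otimes\cE_X)\subseteq\mathrm{im}\,F(D\otimes\ev_X)$; this is delicate precisely because the subpresheaf of $\bD(-,DX^\vee X)$ cut out by $(D\otimes\ev_X)\circ\psi=0$ is strictly larger than the one generated by $D\otimes\cE_X$ in general (already for $\bD=[\GL_0,k]$), so the matching condition is not literally $F(D\otimes\cE_X)(s)=0$. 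I would close this gap using Lemma~\ref{gammaXX}: applying $X^\vee X\otimes-$ makes $\gamma_X$ \emph{split} exact, so any $\psi$ with $(D\otimes\ev_X)\circ\psi=0$ has $X^\vee X\otimes\psi$ factoring through $(X^\vee X\otimes D)\otimes\cE_X$, and hence $\psi$ precomposed with a single "evaluation cap" $A\otimes\ev_X$ lies in the tensor ideal generated by $\cE_X$; feeding this into the injectivity of $F(A\otimes\ev_X)$ together with the fact that $F$ (being additive) is exact on the split exact sequences $(X^\vee X\otimes E)\otimes\gamma_X$ should reduce $F(\psi)(s)=0$ to $F(D\otimes\cE_X)(s)=0$. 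Equivalently, the heart of the proof is to show that the class $\{P\in\PSh\bD:SP=0\}$ is closed under subpresheaves — so that $S$ is exact and the topology it induces is exactly $\cT$.

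\emph{Items (ii), (iii), (v).} Given (i), $\Sh\bD$ is the category of sheaves on a $k$-linear site, hence a localization of $\PSh\bD$ with the reflection $S$ of \eqref{reflection} exact (Appendix~\ref{AppTop}); and $\PSh\bD=\Fun(\bD^{\op},\Vecc_k)$ is a Grothendieck category with the representables as a generating set, so its localization $\Sh\bD$ is Grothendieck — this is (ii). For (iii), every presheaf is a quotient of a coproduct of representables by co-Yoneda, and $S$ is a coproduct- and epimorphism-preserving left adjoint with $S\circ Y=Y_0$ and $SF=F$ for $F\in\Sh\bD$, which gives the quotient statement; moreover $\mathrm{Hom}_{\Sh\bD}(Y_0 D,-)=(-)(D)$ preserves filtered colimits by (iv), so $Y_0 D$ is compact. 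For (v), $\Sh\bD$ is abelian and $\mathrm{Hom}_{\Sh\bD}(Y_0 E,-)=(-)(E)$, so $Y_0(D\otimes\gamma_X)$ is exact in $\Sh\bD$ iff $0\to F(D)\to F(DX^\vee X)\to F(DX^\vee XX^\vee X)$ is exact for every $F\in\Sh\bD$, which holds by the very definition of $\Sh\bD$.
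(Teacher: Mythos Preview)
Your proposal is essentially correct and follows the same strategy as the paper: parts (ii)--(v) and the verification of the topology axioms in (i) match the paper's arguments almost verbatim (and in places you are more careful, e.g.\ in justifying the reduction from arbitrary covering sieves to the generating sieves $R_{D,X}$).

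The only substantive difference is in the hard implication $\Sh(\bD,\cT)\subseteq\Sh\bD$ of part~(i), which you rightly flag as the crux but leave as a sketch via Lemma~\ref{gammaXX}. The paper does this step more directly: for any $g:B\to DX^\vee X$ with $(D\otimes\ev_X)\circ g=0$, one computes from the definition of $\cE_X$ the identity
\[
g\circ(B\otimes\ev_X)\;=\;(D\otimes\cE_X)\circ\bigl(-\,g\otimes X^\vee X\bigr),
\]
so that applying $F$ gives $F(B\otimes\ev_X)\bigl(F(g)(s)\bigr)=0$ whenever $F(D\otimes\cE_X)(s)=0$; injectivity of $F(B\otimes\ev_X)$ (the $\cT$-sheaf condition at $(B,X)$) then forces $F(g)(s)=0$, proving middle exactness. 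This explicit commutative square is exactly the concrete manifestation of the split exactness you invoke, so your route can be completed to the same conclusion, but going through an abstract factorisation from Lemma~\ref{gammaXX} (with $X^\vee X$ on the left) and then capping off is more awkward than writing down the square directly with $g\otimes X^\vee X$ on the right. Your hedged ``should reduce'' is honest; the missing line is precisely the displayed identity above.
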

\begin{proof}
Part (i) will be proved in Subsection~\ref{SecPf}. We explain how (i) implies (ii) in the usual fashion.
As a left adjoint, the reflection $S$ in \eqref{reflection} is cocontinuous. This already implies that $\Sh\bD$ is a cocomplete. It also follows that the coproduct in $\Sh\bD$ 
$$G:=\bigoplus_{X\in\Ob\bD/\simeq} X$$
 over the set of isomorphism classes of objects in $\bD$, is a generator of $\Sh\bD$, meaning that $\Sh\bD(G,-):\Sh\bD\to\Vecc$ is faithful.
Hence it suffices to show that $\Sh\bD$ is abelian and that direct limits of short exact sequences are (left) exact. Both properties follow easily if $S:\PSh\bD\to\Sh\bD$ is (left) exact, {\em i.e.} when $\Sh\bD$ is a localisation of $\PSh\bD$. Hence claim (ii) follows from claim (i) and Theorem~\ref{ThmBQ}.

For part (v), we can observe that by definition and the Yoneda lemma
$$\Sh\bD(Y_0(D\otimes\gamma_X),F)\;=\;\PSh\bD(Y(D\otimes\gamma_X),F)\;=\;F(D\otimes\gamma_X),$$
for arbitrary $F\in\Sh\bD$. Hence $Y_0(D\otimes\gamma_X)$ is indeed exact.

Part (iv) follows easily from the fact that in $\Vecc$, a filtered colimit of short exact sequences is exact. 

Finally, we prove part (iii). Since by construction objects in $\bD$ are compact in $\PSh\bD$, part (iv) implies that $Y_0:\bD\to \Sh\bD$ sends every object in $\bD$ to a compact object in $\Sh\bD$. That every object is a quotient of coproduct of objects in $\bD$ follows from the above fact that $G$ is a generator.
\end{proof}

\subsection{Proof of \ref{ThmGro}(i) }\label{SecPf}
Here we complete the proof of Theorem~\ref{ThmGro}. As heuristic explanation of \ref{ThmGro}(i) we also present a non-enriched but similar site in Analogy~\ref{Anal}. 

\subsubsection{}
First we prove that $\cT$ from \ref{DefTop} constitutes a topology as in Definition~\ref{Def1}. Condition (T1) is immediate from Example~\ref{ExampUni}(i). For condition (T2), consider $A\in\bD$, $R\in \cT(A)$ and a morphism $f:B\to A$ in $\bD$. By definition, there exists a strongly faithful $X\in\bD$ such that $A\otimes\ev_X$ is in $R$. It then follows that $B\otimes\ev_X$ is in $f^{-1}R$, so $f^{-1}R\in\cT(B)$.

For Condition (T3) consider $S\subset\bD(-,A)$ and $R\in \cT(A)$ as in (T3). Since there exists $f:=A\otimes\ev_X$ in $R(AX^\vee X)$, for some strongly faithful $X$, there must exist
 a strongly faithful $Y\in\bD$ such that $AX^\vee X\ev_Y$ is in $f^{-1}S(AX^\vee X Y^\vee Y)$. The latter just means that $A\otimes\ev_X\otimes\ev_Y$ is in $S(AX^\vee XY^\vee Y)$, which means that also $A\otimes\ev_{X\otimes Y}$ is in $S(AY^\vee X^\vee XY)$. It then follows from Proposition~\ref{PropSF}(i) that $S\in\cT(A)$.
 

 \subsubsection{}\label{Pf2} Now we prove the equality $\Sh\bD=\Sh(\bD,\cT)$. Take an arbitrary presheaf $F\in\PSh\bD$. By a `pair' $(D,X)$ we mean an arbitrary $D\in\bD$ and a strongly faithful $X\in\bD$.
 For each pair $(D,X)$, denote by $R^D_X$ the sieve on $D$ generated by the morphism $D\otimes \ev_X$. This is the minimal sieve on $D$ containing $D\otimes \ev_X$, or equivalently the image of 
 $$\bD(-,DX^\vee X)\xrightarrow{(D \otimes\ev_X)\circ-} \bD(-,D).$$
 Since every $R\in \cT(D)$ is of the form $R^D_X\subset R\subset \bD(-,D)$ for some strongly faithful $X$, it follows from Definition~\ref{Def2} that $F\in\PSh\bD$ is a $\cT$-sheaf if and only if
 $$F(D)\;\to\; \Nat(R^D_X,F)$$
is an isomorphism for every pair $(D,X)$. Since the representable objects in $\bD$ yield a set of generators for $\PSh\bD$, we can complete the epimorphism $\bD(-, DX^\vee X)\tto R^D_X$ to an exact sequence
$$\bigoplus_{g:B\to DX^\vee X, f\circ g=0}\bD(-,B)\;\to\; \bD(-, DX^\vee X)\;\to\; R^D_X\;\to\; 0,$$
in $\PSh\bD$ with $f:=D\otimes\ev_X$. In other words, $F$ is a $\cT$-sheaf if and only if the sequence
\begin{equation}\label{eqF1}0\to F(D)\xrightarrow{F(f)} F(DX^\vee X)\to  \prod_{g:B\to DX^\vee X, f\circ g=0}F(B)\end{equation}
is exact for every pair $(D,X)$.
On the other hand, by definition, $F\in\Sh\bD$ if and only if 
\begin{equation}\label{eqF2}0\to F(D)\xrightarrow{F(f)}  F(DX^\vee X)\xrightarrow{F(D\cE_X)}F(DX^\vee XX^\vee X)\end{equation}
is exact for every pair $(D,X)$.

For any pair $(D,X)$, clearly the sequence \eqref{eqF1} is exact whenever \eqref{eqF2} is exact. On the other hand, assume that \eqref{eqF1} is exact, for a fixed $X$ but for every $D\in\bD$. For any $g:B\to DX^\vee X$ with $f\circ g=0$, we have the following commutative diagram
$$\xymatrix{
DX^\vee XX^\vee X\ar[rr]^-{D\otimes\cE_X}&&DX^\vee X\\
BX^\vee X\ar[rr]_-{B\otimes \ev_X}\ar[u]^{-g\otimes X^\vee X}&&B\ar[u]_g.
}$$ 
Applying $F$ yields a commutative diagram
$$\xymatrix{
F(DX^\vee XX^\vee X)\ar[d]&&F(DX^\vee X)\ar[ll]_-{F(D\cE_X)}\ar[d]^{F(g)}\\
F(BX^\vee X)&&F(B),\ar@{_{(}->}[ll]
}$$ 
and the fact that the lower horizontal arrow is a monomorphism follows from our assumption that \eqref{eqF1} with $D$ replaced by $B$ be exact.
Consider $a\in F(DX^\vee X)$ such that $F(D\cE_X)(a)=0$. By commutativity of the diagram (and using the monomorphism) we find that also $F(g)(a)=0$. Exactness of \eqref{eqF1} thus implies that $a$ is the image of $F(f)$ and therefore \eqref{eqF2} is exact too. This concludes the proof of the claim $\Sh\bD=\Sh(\bD,\cT)$.

\begin{analogy}\label{Anal}
Consider a category $\bB$ with finite products, with terminal object~$\ast$. By \cite[IV.1.3]{SGA}, the morphism $U\to\ast$ is a `universal effective epimorphism' if the induced $V\times U\to V $ is an effective epimorphism for every $V\in\bB$.
By \cite[IV.1.8]{SGA}, if $U\to\ast$ and $U'\to\ast$ are universal effective epimorphisms, the same is true for $U\times U'$.
 Take $C\subset\Ob\bB$, containing $\ast$ and closed under products, such that $U\to \ast$ is a universal effective epimorphism for every $U\in C$. The corresponding collection of coverings $V\times U\to V$ forms a classical (non-enriched) Grothendieck (pre)topology. The sheaves are the presheaves $F:\bB^{\op}\to\Set$ for which
$$F(V)\to F(V\times U)\rightrightarrows F(V\times U\times U)$$
is an equaliser for each $V\in\bB$ and $U\in C$. In particular the representable presheaves are sheaves.
\end{analogy}

\begin{remark}
Denote by $\Sigma\subset\Xi(\bD)$ the class of all exact sequences 
$$D\otimes\gamma_X\quad\mbox{and}\quad D\otimes (X^\vee\otimes X\xrightarrow{\ev_X} \unit\to 0\to 0),$$ for arbitrary $D\in \bD$ and strongly faithful $X\in\bD$. 
It follows easily, and from similar arguments as used in \ref{Pf2}, that $\Sigma$ constitutes an `ind-class', as in Definition \ref{DefInd} in the appendix. We can therefore also prove Theorem~\ref{ThmGro}(i) by applying Propositions~\ref{LemSch} and~\ref{LemSch2}.
\end{remark}
\subsection{Connection with abelian envelopes}

\begin{theorem}\label{ThmShEnv}
If $\Sh\bD$ is tensor equivalent to the ind-completion of a tensor category $\bT$ over $k$, then $\bT$ is the abelian envelope of $\bD$. 
\end{theorem}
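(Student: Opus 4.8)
The plan is to identify $\bT$, up to tensor equivalence, with the subcategory of rigid (equivalently, by Lemma~\ref{LemInd}, compact) objects of $\Ind\bT\simeq\Sh\bD$ via the functor $Y_0:\bD\to\Sh\bD$, and then to check the universal property of Definition~\ref{DefAbEnv} directly. First I would note that by Theorem~\ref{ThmGro}(iii) every object of $\bD$ is compact in $\Sh\bD$, hence lands in $\bT$ under the composite $\bD\xrightarrow{Y_0}\Sh\bD\simeq\Ind\bT$; since $Y_0$ is fully faithful and symmetric monoidal (see~\ref{DefSigma}), this gives a faithful tensor functor $F:\bD\to\bT$, which is the candidate abelian envelope. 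Because $\bT$ is by hypothesis a tensor category, it only remains to verify that for every tensor category $\bT_1/k$, composition with $F$ induces an equivalence $\Tens^{ex}(\bT,\bT_1)\simeq\Tens^{faith}(\bD,\bT_1)$.

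For full faithfulness of this composition functor I would use that $\bT$ generates $\Ind\bT$ under filtered colimits and that exact tensor functors $\bT\to\bT_1$ extend uniquely (up to unique isomorphism) to cocontinuous tensor functors $\Ind\bT\to\Ind\bT_1$, by the universal property of ind-completion (\cite[\S 7]{Del90}); combined with Theorem~\ref{ThmGro}(iii)--(iv), which says $\Sh\bD$ is generated under (filtered) colimits by $\bD$, a natural transformation between two such functors is determined by its restriction along $F$, and conversely any natural transformation on $\bD$ extends. For essential surjectivity — the heart of the matter — I would take a faithful tensor functor $G:\bD\to\bT_1$ and produce an exact tensor functor $\bT\to\bT_1$ whose restriction along $F$ is $G$. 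The strategy: $G$ induces a cocontinuous symmetric monoidal functor $\PSh\bD\to\Ind\bT_1$ by left Kan extension along the Yoneda embedding (Day convolution is characterised by this universal property, \cite[\S 3.2]{Sch2}); I then need this functor to descend through the reflection $S:\PSh\bD\to\Sh\bD$, i.e. to send each $Y(D\otimes\gamma_X)$ with $X$ strongly faithful to an exact sequence in $\Ind\bT_1$. This is exactly where faithfulness of $G$ enters: $X$ strongly faithful means $\gamma_X\in\Xi(\bD)$, and since $G$ is faithful, $G(X)$ is a non-zero object of the tensor category $\bT_1$, hence strongly faithful there by Proposition~\ref{PropSF}(iii); moreover $G$ preserves the relevant split exact sequence $X^\vee\otimes X\otimes\gamma_X$ (Lemma~\ref{gammaXX}) and one checks, using that $G(X)$ reflects cokernels (Proposition~\ref{Refl}), that $G(\gamma_X)=\gamma_{G(X)}$-type data is exact in $\Ind\bT_1$. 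Thus the Kan extension factors as $\Sh\bD\to\Ind\bT_1$, a cocontinuous tensor functor; it sends compact objects to rigid objects (it is monoidal, so preserves duals, and $Y_0(D)$ is rigid in $\Sh\bD$), hence restricts to a tensor functor $\bT\to\bT_1$ landing in $\bT_1$ by Lemma~\ref{LemInd}, and it is right exact, hence exact by Remark~\ref{RemEF}. Its restriction along $F$ recovers $G$ by construction of the Kan extension.

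The main obstacle I expect is the descent step: verifying that the left Kan extension $\PSh\bD\to\Ind\bT_1$ of $G$ actually sends the generating sequences $Y(D\otimes\gamma_X)$ to exact sequences, so that it factors through $\Sh\bD$. The subtlety is that the Kan extension need not be exact on $\PSh\bD$ — only cocontinuous — so one cannot simply apply it termwise to an exact sequence; instead one must exploit that $D\otimes\gamma_X$ becomes split exact after tensoring with $X^\vee\otimes X$ (Lemma~\ref{gammaXX}), apply $G$ to get a split exact sequence in $\bT_1\subset\Ind\bT_1$, and then run the ``reflecting cokernels'' argument of Proposition~\ref{Refl} inside $\Ind\bT_1$ using that $G(X)$ is strongly faithful there. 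A secondary technical point is checking that the resulting functor $\Sh\bD\to\Ind\bT_1$ is genuinely symmetric monoidal (not merely lax), which follows because $S$ is symmetric monoidal (Day's reflection theorem, \cite[Theorem~1.2(2)]{Day}) and the Kan extension is strong monoidal by the Day convolution universal property; and that it is essentially unique, which follows from the generation statement in Theorem~\ref{ThmGro}(iii). Once these are in place, the equivalence of categories — and hence the identification of $\bT$ as the abelian envelope — is formal.
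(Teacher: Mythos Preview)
Your proposal is correct and follows essentially the same route as the paper's proof: identify $\bT$ with the rigid objects of $\Sh\bD$ so that $Y_0$ yields $F:\bD\to\bT$, then use left Kan extension along $Y_0$ (via \cite[Theorem~3.2.4]{Sch2}) together with Proposition~\ref{PropSF}(iii) to see that faithful tensor functors $\bD\to\bT_1$ send the sequences $D\otimes\gamma_X$ to exact sequences and hence extend to cocontinuous tensor functors $\Sh\bD\to\Ind\bT_1$, which restrict to right exact (hence exact, by Remark~\ref{RemEF}) tensor functors $\bT\to\bT_1$. The paper packages this as a single commutative diagram~\eqref{comm2} passing through the auxiliary category $\Tens^{\gamma}(\bD,-)$ of tensor functors preserving the $\gamma$-sequences, and then shows $\Tens^{\gamma}(\bD,\bT_1)=\Tens^{faith}(\bD,\bT_1)$ as a separate step; your more hands-on organisation avoids naming $\Tens^{\gamma}$ but uses exactly the same ingredients.
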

\begin{proof}
If $\Sh\bD$ is equivalent to the ind-completion of a tensor category, we can define a tensor category $\bT$ as the full subcategory of $\Sh\bD$ of all rigid objects, by Lemma~\ref{LemInd}. Since $\bD$ is rigid, this means that $Y_0$ takes values in the subcategory $\bT$. This allows us to construct a fully faithful tensor functor $F:\bD\to\bT$, which admits a commutative diagram
\begin{equation}\label{comm1}\xymatrix{
\bT\ar@{^{(}->}[r]&\Ind\bT\\
\bD\ar@{^{(}->}[u]^{F}\ar@{^{(}->}[r]^{Y_0}&\Sh\bD.\ar[u]^{\sim}
}\end{equation}

We introduce the category $\Tens^{rex}(\bT,-)$ of right exact tensor functors, the category $\Tens^{\gamma}(\bD,-)$ of tensor functors which send every exact sequence \eqref{eqDX}, for $X$ strongly faithful, to an exact sequence and the category $\Tens^{cc}$ of all cocontinuous tensor functors. For each tensor category $\bT_1$, diagram~\eqref{comm1} (and Theorem~\ref{ThmGro}(v)) induces a commutative diagram
\begin{equation}\label{comm2}\xymatrix{
\Tens^{rex}(\bT,\bT_1)\ar[r]\ar[d]& \Tens^{rex}(\bT,\Ind\bT_1)\ar[d]& \Tens^{cc}(\Ind\bT,\Ind\bT_1)\ar[l]\ar[d] \\\
\Tens^{\gamma}(\bD,\bT_1)\ar[r]&\Tens^{\gamma}(\bD,\Ind\bT_1)&\Tens^{cc}(\Sh\bD,\Ind\bT_1)\ar[l],
}\end{equation}
where each functor is given by composition with a tensor functor. The right vertical arrow is an equivalence, since it is induced from a tensor equivalence. Inverses of the two right horizontal arrows are given by taking left Kan extensions, see \cite[Theorem~3.2.4]{Sch2}.
Consequently, the middle vertical arrow is also an equivalence. The two left horizontal arrows are equivalences since any tensor functor from a pseudo-tensor category to the ind-completion of a tensor category takes values in rigid objects. We can thus use the equivalence between $\bT_1$ and the category of rigid objects in $\Ind\bT_1$ from Lemma~\ref{LemInd} to construct inverses. Consequently, also the left vertical arrow is an equivalence.

Now we will argue that the latter equivalence can be rewritten as the equivalence required by Definition~\ref{DefAbEnv}. Firstly, by Remark~\ref{RemEF}, we have
\begin{equation}\label{rexex}\Tens^{rex}(\bT,\bT_1)\,=\,\Tens^{ex}(\bT,\bT_1)\,\subset\,\Tens^{faith}(\bT,\bT_1).\end{equation}
 
 We  claim that we always have an inclusion
$$\Tens^{faith}(\bD,\bT_1)\,\subset\,\Tens^{\gamma}(\bD,\bT_1).$$
Indeed, a faithful tensor functor $H:\bD\to\bT_1$ maps every non-zero object in $\bD$ to a non-zero object in $\bT_1$. By Proposition~\ref{PropSF}(iii) every non-zero object in $\bT_1$ is strongly faithful, from which it follows that
$H$ sends every sequence \eqref{eqDX} to an exact sequence.

Moreover, by \eqref{rexex} and the left equivalence in \eqref{comm2}, every functor $H$ in $\Tens^{\gamma}(\bD,\bT_1)$ extends to a faithful functor $\bT\to\bT_1$, hence $H$ must be faithful as well. In particular $\Tens^{faith}(\bD,\bT_1)$ is equal to $\Tens^{\gamma}(\bD,\bT_1)$. Combining that equality with the equality in \eqref{rexex} and the equivalence on the left in diagram~\eqref{comm2} completes the proof.
\end{proof}

\begin{remark}\begin{enumerate}[label=(\roman*)]
\item Theorem~\ref{ThmShEnv} is not specific to $\Sh\bD$. Indeed, the same statement is true for instance for $\PSh\bD$ itself. However, if there exists a full subcategory $\bD\subset\bC\subset\PSh\bD$ which is equivalent to the ind-completion of a tensor category, then it follows from Proposition~\ref{PropSF}(iii) and the Yoneda lemma that $\bC\subset \Sh\bD$.
\item If $\bD$ is a semisimple tensor category, then $\Sh\bD=\PSh\bD=\Ind\bD$.
\end{enumerate}
\end{remark}

Motivated by Theorem~\ref{ThmShEnv}, we provide an explicit criterion for when $\Sh\bD$ is equivalent to the ind-completion of a tensor category.

\begin{prop}\label{PropShInd}
The following conditions are equivalent:
\begin{enumerate}[label=(\roman*)]
\item $\Sh\bD$ is tensor equivalent to the ind-completion of a tensor category over $k$.
\item There exists $M\in\Sh\bD$, with $M\otimes-:\Sh\bD\to\Sh\bD$ faithful and exact, which splits every morphism in $\bD$.
\item For every morphism $f$ in $\bD$, there exists $M\in\Sh\bD$, with $M\otimes-$ faithful and exact, for which $M\otimes f$ is split.
\end{enumerate}
\end{prop}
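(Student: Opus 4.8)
The plan is to prove the three conditions in Proposition~\ref{PropShInd} equivalent by a cycle $(i)\Rightarrow(ii)\Rightarrow(iii)\Rightarrow(i)$, with the bulk of the work in the last implication. For $(i)\Rightarrow(ii)$: if $\Sh\bD\simeq\Ind\bT$ for a tensor category $\bT$, then by Theorem~\ref{ThmShEnv} $\bT$ is the abelian envelope and $\bD$ embeds in $\bT$ via a faithful tensor functor; in a tensor category one can find an object splitting any given morphism only if it has enough projectives, which need not hold, so instead I take $M$ to be a suitable \emph{ind}-object. Concretely, in $\Ind\bT$ one has enough ``projective'' ind-objects: for each $C\in\bD$ the object $\bigoplus$ (or a filtered colimit) built from duals will split all evaluations, and combining these as $M:=\bigoplus_{C\in\Ob\bD/\simeq} (C\otimes C^\vee)^{\oplus\mathbb N}$-type construction — more carefully, one uses that $M\otimes-$ being faithful and exact on $\Sh\bD=\Ind\bT$ is automatic once $M$ is a faithful ind-object with all $M\otimes-$ exact, which holds because tensoring is exact in $\Ind\bT$. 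The point of $(ii)$ is just to record that such an $M$ exists globally, splitting \emph{every} morphism in $\bD$ simultaneously; this can be extracted from the ind-projective generator. The implication $(ii)\Rightarrow(iii)$ is trivial: the single global $M$ works for each individual $f$.

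The substantive step is $(iii)\Rightarrow(i)$. The idea is to run the machinery of Theorem~\ref{ThmShEnv} in reverse: I want to produce a tensor category $\bT$ with $\Ind\bT\simeq\Sh\bD$, and by Lemma~\ref{LemInd} the natural candidate for $\bT$ is the full subcategory of rigid objects in $\Sh\bD$. So the real content is showing that, under hypothesis $(iii)$, the Grothendieck category $\Sh\bD$ (which is symmetric closed monoidal by Theorem~\ref{ThmGro}) is the ind-completion of its subcategory of rigid objects, equivalently that $\Sh\bD$ is \emph{locally finitely presented} with the compact objects being exactly the rigid ones, and that it is generated under filtered colimits by rigid objects. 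From Theorem~\ref{ThmGro}(iii),(iv) we already know every object of $\bD$ is compact in $\Sh\bD$ and that $\Sh\bD$ is generated by coproducts of objects of $\bD$; what is missing is that these compact/rigid objects are \emph{closed under} the operations needed (in particular that cokernels of maps between objects of $\bD$ land, after finitely many steps, among rigid objects), which is exactly where splitting is used.

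Here is how I expect $(iii)$ to enter. Given a morphism $f:A\to B$ in $\bD$ and $M\in\Sh\bD$ with $M\otimes-$ faithful exact and $M\otimes f$ split, the cokernel of $Y_0(f)$ in $\Sh\bD$, call it $C_f$, satisfies $M\otimes C_f\simeq M\otimes(\text{a summand of }B)$ which is a rigid object tensored with $M$; faithfulness and exactness of $M\otimes-$ let me transfer rigidity back. More precisely I would argue: (a) $M$ itself is rigid — indeed $M$ splits $\id$ and more to the point $M\otimes-$ exact faithful forces $M^\vee$ to exist, using that $\Sh\bD$ is closed monoidal so $M^\vee=[M,\unit]$ and the snake identities can be checked after applying the faithful exact functor $M\otimes-$; (b) hence $M$ lies in $\Ind\bT_0$ where $\bT_0$ = rigid objects, in fact $M$ is a (filtered colimit of) rigid objects; (c) using $M\otimes C_f$ being a summand of a rigid object, together with $C_f$ being a subquotient structure, one shows $C_f$ is rigid by the same device as in Lemma~\ref{LemInd} (writing $\co$ through a compact stage). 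Then every cokernel of a morphism of rigid objects is rigid, every object of $\Sh\bD$ is a filtered colimit of such cokernels (since it is a quotient of a coproduct of objects of $\bD$ and coproducts/quotients are built from finite colimits and filtered colimits), and the subcategory $\bT$ of rigid objects is an abelian tensor category with $\Ind\bT\simeq\Sh\bD$; that the equivalence is monoidal follows because $Y_0$ is monoidal and $\otimes$ on $\Sh\bD$ is the sheafification of Day convolution, which is cocontinuous, hence determined by its restriction to $\bD\subset\bT$.

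The main obstacle I anticipate is step (c): promoting ``$M\otimes C_f$ is (a summand of) a rigid object'' to ``$C_f$ is rigid''. This is not purely formal because $M\otimes-$, while faithful and exact, need not reflect rigidity on the nose; the trick (as in the proof of Lemma~\ref{LemInd} and the ``monoidal analogue'' computations in Section~\ref{SecSplit}) is to use compactness of $\unit$ together with $M$ being a filtered colimit of rigid objects to factor a candidate coevaluation $\unit\to C_f\otimes C_f^\vee$ through a compact rigid stage, then verify the snake identities after tensoring with the faithful $M$. I would also need to be slightly careful that $C_f^\vee:=[C_f,\unit]$ is the right candidate dual and that internal-Hom computations in $\Sh\bD$ interact correctly with the faithful exact $M\otimes-$; all of this should go through using Theorem~\ref{ThmGro} and the closed structure established there, together with Proposition~\ref{PropSF}(iii) applied inside $\bT$ once it is known to be a tensor category.
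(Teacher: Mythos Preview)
Your cycle $(i)\Rightarrow(ii)\Rightarrow(iii)\Rightarrow(i)$ matches the paper's, but both nontrivial implications have genuine gaps. For $(i)\Rightarrow(ii)$, your proposed $M$ (a coproduct of objects $C\otimes C^\vee$) has no reason to split an arbitrary morphism in $\bD$; that would amount to $\bD$ being self-splitting, which is not assumed. The paper instead takes $M$ to be an injective object $I\supset Y$ in $\Ind\bT$ for some nonzero $Y\in\bT$: then $I\otimes-$ is exact (as for any object of $\Ind\bT$) and faithful (if $I\otimes N=0$ then $Y\otimes N=0$, and $N\hookrightarrow Y^\vee YN$ forces $N=0$), and since $X\otimes I$ is injective for rigid $X$ by adjunction, tensoring any $f:A\to B$ in $\bD\subset\bT$ with $I$ splits both the inclusion of the kernel and the inclusion of the image, hence splits $I\otimes f$.

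For $(iii)\Rightarrow(i)$, your step~(a) is simply false: $M\otimes-$ faithful and exact does \emph{not} make $M$ rigid. Indeed the very $M=I$ just constructed is a counterexample whenever $\bT$ lacks enough projectives (e.g.\ $\bT=\Rep G$ for a reductive group in positive characteristic, cf.\ Remark~\ref{RemG1}), since by Lemma~\ref{LemInd} the rigid objects of $\Ind\bT$ lie in $\bT$. Your claim that the snake identities ``can be checked after applying $M\otimes-$'' confuses verifying an equation with producing the morphisms $\ev_M,\co_M$ in the first place; faithfulness gives uniqueness, not existence. Since your steps (b) and (c) rest on (a), the rigidity argument for $C_f$ collapses. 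The paper avoids rigidity of $M$ altogether: it invokes the criterion from \cite{CP} that a Grothendieck category as in Theorem~\ref{ThmGro} is the ind-completion of a tensor category once $X\otimes-$ is exact for every compact $X$. Under $(iii)$ this is immediate: a compact $X$ is the cokernel of some $Y_0(f)$ with $f:B\to A$ in $\bD$ (Theorem~\ref{ThmGro}(iii)); choose $M$ splitting $f$, so $M\otimes X$ is a direct summand of $M\otimes A$ and hence $M\otimes X\otimes-$ is exact; faithful exactness of $M\otimes-$ then forces $X\otimes-$ to be exact. If you want to avoid the black box from \cite{CP}, the honest task is to prove that exactness-of-compacts criterion directly, not to argue via rigidity of $M$.
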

\begin{proof}
First we show that (i) implies (ii). Assume that $\Sh\bD\simeq\Ind\bT$ for a tensor category $\bT$. The category $\Ind\bT$ is a Grothendieck category and thus has enough injective objects. We take a non-zero $Y\in\bT$ and an injective object $I\in\Ind\bT$ which contains $Y$ as a subobject.  As for any object in $\Ind\bT$, the functor $I\otimes-$ is exact. Furthermore, if $I\otimes N=0$, for $N\in\Ind\bT$, then the subobject $Y\otimes N$ is also zero. However, $N$ is a subobject of $Y^\vee\otimes Y\otimes N$, which implies $N=0$. Hence $I\otimes -$ is faithful.
By applying adjunction, it follows that $X\otimes I$ is also injective for any rigid object $X$. Consider a morphism $f:X\to Y$ in $\bD\subset \bT$. Since $\bT$ is an abelian subcategory of $\Sh\bD\simeq\Ind\bT$, the image and kernel of $f$, which we denote by $Z$ and $K$, are in $\bT$ and hence also rigid. Then clearly $I$ splits $K\hookrightarrow X$ and $Z\hookrightarrow Y$, so also $f$.

That (ii) implies (iii) is trivial.

Finally, we prove that (iii) implies (i). By \ref{DefSigma} and Theorem~\ref{ThmGro}(ii) and (iii), the category $\Sh\bD$ is a `Grothendieck-tensor category', in the terminology of \cite{CP}. As proved explicitly in \cite{CP}, $\Sh\bD$ is therefore equivalent to the ind-completion of a tensor category if every for every compact $X\in\Sh\bD$, the functor $X\otimes-$ is exact. 
We thus take an arbitrary compact object $X$ in $\Sh\bD$. It follows, using standard properties of compact objects, from Theorem~\ref{ThmGro}(iii) (or see \cite{CP} for the precise statement with proof), that $X$ is the cokernel of a morphism $Y_0(f)$, for $f:B\to A$ in $\bD$. Consider $M\in\Sh\bD$ as in part (iii) which splits $f$. By exactness of $M\otimes-$, it follows that $M\otimes X$ is isomorphic to a direct summand of $M\otimes A$. Hence $M\otimes X\otimes-$ is exact. Since $M\otimes-$ is faithful and exact, also $X\otimes-$ must be exact. 
This concludes the proof.\end{proof}

\section{Main theorem and applications}\label{SecMain}

\subsection{Main results} Fix a pseudo-tensor category $\bD$ over a field $k$

\begin{theorem}\label{Thm}
Assume that {\underline{one}} of the following conditions is satisfied:
\begin{enumerate}[label=(\roman*)]
\item For every morphism $f$ in $\bD$, there exists $M\in \Sh\bD$, with $M\otimes-:\Sh\bD\to\Sh\bD$ faithful and exact, such that $M\otimes f$ is split in $\Sh\bD$.
\item Every morphism $f$ in $\bD$ is split by a strongly faithful object in $\bD$.
\end{enumerate}
Then $\bD$ admits an abelian envelope $(F,\bT)$. Moreover, there is a tensor equivalence
$\Ind\bT\simeq\Sh\bD$, which admits a commutative (up to isomorphism) diagram of tensor functors
$$\xymatrix{
\Sh\bD\ar[r]^{\sim}&\Ind\bT\\
\bD\ar@{^{(}->}[u]^{Y_0}\ar@{^{(}->}[r]^F&\bT.\ar@{^{(}->}[u]
}$$ 
\end{theorem}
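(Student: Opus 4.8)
The plan is to reduce Theorem~\ref{Thm} to the machinery already assembled in Section~\ref{SecSh}, specifically to Theorem~\ref{ThmShEnv} and Proposition~\ref{PropShInd}. The key observation is that condition (i) of Theorem~\ref{Thm} is \emph{literally} condition (iii) of Proposition~\ref{PropShInd}: for every morphism $f$ in $\bD$ there exists $M\in\Sh\bD$ with $M\otimes-$ faithful and exact such that $M\otimes f$ is split. So under assumption (i), Proposition~\ref{PropShInd} immediately yields that $\Sh\bD$ is tensor equivalent to the ind-completion $\Ind\bT$ of a tensor category $\bT$ over $k$. Then Theorem~\ref{ThmShEnv} tells us that $\bT$ is the abelian envelope of $\bD$, and the commutative diagram of tensor functors is exactly the one exhibited in the proof of Theorem~\ref{ThmShEnv} (diagram~\eqref{comm1}), where $F:\bD\to\bT$ is the corestriction of $Y_0$ to the subcategory of rigid objects in $\Sh\bD\simeq\Ind\bT$ (which contains $Y_0(\bD)$ since $\bD$ is rigid, using Lemma~\ref{LemInd}).

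\textbf{Reducing (ii) to (i).} It remains to show that assumption (ii) implies assumption (i). Suppose $f:A\to B$ in $\bD$ is split by a strongly faithful object $X\in\bD$, so that $X\otimes f$ is split in $\bD$. We take $M:=Y_0(X)\in\Sh\bD$. Since $Y_0$ is a symmetric monoidal functor, $M\otimes Y_0(f)=Y_0(X\otimes f)$ is split in $\Sh\bD$, and since the diagram in the theorem statement will identify $Y_0(f)$ with $F(f)$, this gives the splitting required by (i). What must still be checked is that $Y_0(X)\otimes-:\Sh\bD\to\Sh\bD$ is faithful and exact whenever $X$ is strongly faithful in $\bD$. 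Exactness follows because $X$ is rigid, hence $Y_0(X)$ is rigid in $\Sh\bD$ (as $Y_0$ is monoidal and preserves duals), and tensoring with a rigid object in any closed monoidal Grothendieck category is exact, being both a left and a right adjoint (to tensoring with the dual). For faithfulness, I would argue as in the proof of Proposition~\ref{Refl}/\ref{PropSF}: if $Y_0(X)\otimes N=0$ for $N\in\Sh\bD$, then since $N$ is a subobject (or retract) of $Y_0(X)^\vee\otimes Y_0(X)\otimes N$ via the coevaluation and the snake identity~\eqref{snake}, we get $N=0$; alternatively, strong faithfulness of $X$ means $\ev_X$ is the cokernel of $\cE_X$ (Definition~\ref{DefSF}(ii)), and $Y_0$ sends $\gamma_X$ to an exact sequence by Theorem~\ref{ThmGro}(v), so $Y_0(\ev_X)$ is an epimorphism in $\Sh\bD$, which gives faithfulness of $Y_0(X)\otimes-$ by the analogue of Definition~\ref{DefF}(ii) in the rigid monoidal abelian category $\Sh\bD$.

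\textbf{Main obstacle.} The genuinely substantive input — that a Grothendieck-tensor category all of whose compact objects have exact tensoring is the ind-completion of a tensor category — is black-boxed from \cite{CP} inside Proposition~\ref{PropShInd}, and the construction of $\Sh\bD$ as a well-behaved closed monoidal Grothendieck category is Theorem~\ref{ThmGro}. So at the level of this proof the only real work is the implication (ii)$\Rightarrow$(i), and within that the only non-formal point is verifying that $Y_0(X)\otimes-$ is faithful and exact; I expect the faithfulness verification to be the most delicate step, since it is where strong faithfulness of $X$ (as opposed to mere faithfulness) is used, and one must be careful that the relevant epimorphism/retract argument takes place correctly in $\Sh\bD$ rather than in $\bD$. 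Once (ii)$\Rightarrow$(i) is established, the theorem is just the conjunction of Proposition~\ref{PropShInd}~(iii)$\Rightarrow$(i) and Theorem~\ref{ThmShEnv}, with the diagram transported along the tensor equivalence $\Sh\bD\simeq\Ind\bT$.
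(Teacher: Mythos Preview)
Your proposal is correct and follows the paper's own proof almost verbatim: the paper reduces (i) to Proposition~\ref{PropShInd}(iii)$\Rightarrow$(i) together with Theorem~\ref{ThmShEnv}, and for (ii)$\Rightarrow$(i) it verifies that $Y_0(X)\otimes-$ is exact by rigidity and faithful because Theorem~\ref{ThmGro}(v) makes $Y_0(\ev_X)$ an epimorphism in $\Sh\bD$, so $X^\vee X\otimes A\twoheadrightarrow A$ forces $A=0$ whenever $X\otimes A=0$.

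One small correction: your \emph{first} suggested argument for faithfulness---that $N$ is a subobject or retract of $X^\vee X\otimes N$ ``via the coevaluation and the snake identity''---does not go through as stated. The snake identity~\eqref{snake} exhibits $X$ as a retract of $X\otimes X^\vee\otimes X$, not $\unit$ as a retract of $X^\vee\otimes X$; and $\co_X\otimes N:N\to X X^\vee\otimes N$ is not known to be a monomorphism in $\Sh\bD$ before you know $\unit$ is simple there (which is part of what you are trying to prove). Your \emph{second} argument, using Theorem~\ref{ThmGro}(v) to get $Y_0(\ev_X)$ epi and hence $X^\vee X\otimes A\twoheadrightarrow A$, is the right one and is exactly the paper's; this is also where strong faithfulness of $X$ (not mere faithfulness) genuinely enters, as you correctly flagged.
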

\begin{proof}
We claim that condition (ii) implies condition (i). Indeed, for $X\in\bD$, the fact that $X\otimes-$ is exact follows from bi-adjunction with $X^\vee\otimes-$. Moreover, by Theorem~\ref{ThmGro}(v) if $X$ is strongly faithful then $\ev_X$ is an epimorphism in $\Sh\bD$. For every $A\in\Sh\bD$ we thus have an epimorphism $X^\vee\otimes X\otimes A\tto A$. So $X\otimes A=0$ implies $A=0$, and $X\otimes-$ is faithful.

That condition (i) implies the conclusion is an immediate consequence of Theorem~\ref{ThmShEnv} and Proposition~\ref{PropShInd}.
\end{proof}

\begin{remark}
Theorem~\ref{Thm}(ii) implies in particular that a self-splitting pseudo-tensor category in which every non-zero object is strongly faithful admits an abelian envelope. We cannot remove the latter assumption, since (over any field $k$ of characteristic zero) Lemma~\ref{FunnyEx} provides examples of  self-splitting pseudo-tensor categories which do not admit an abelian envelope  (by Corollary~\ref{CorSF}). If we do not demand that $k$ is algebraically closed, the pseudo-tensor categories can be taken to have finite dimensional morphism spaces.
\end{remark}

\begin{remark}\label{RemRex}
Under assumption \ref{Thm}(ii), one can show directly that the entire class $\Xi(\bD)$ is an `ind-class' as in Definition~\ref{DefInd}. Consequently the subcategory of $\PSh\bD$ which sends all sequences in $\Xi(\bD)$ to exact sequences is a Grothendieck category, by Appendix~\ref{AppTop}, and one can proceed as above to show that it is the ind-completion of the abelian envelope of $\bD$. By uniqueness, it is therefore equivalent to $\Sh\bD$.
Instead, one can also show directly that every functor in $\Sh\bD$ sends every sequence in $\Xi(\bD)$ to an exact sequence in $\Vecc$. Finally, if $\bD$ also happens to be a tensor category, we thus find that $\Sh\bD$ is $\Ind\bD$.
\end{remark}

Remark~\ref{RemRex} contains the following well-known observation.
\begin{corollary}\label{CorTriv}
If $\bT$ is a self-splitting tensor category, it is its own abelian envelope.
\end{corollary}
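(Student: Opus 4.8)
The goal is to prove Corollary~\ref{CorTriv}: if $\bT$ is a self-splitting tensor category, then $\bT$ is its own abelian envelope. The plan is to apply Theorem~\ref{Thm} to $\bD = \bT$ and then invoke the uniqueness of the abelian envelope together with the trivial fact that the identity functor exhibits $\bT$ as an envelope of itself.

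First I would check that condition \ref{Thm}(ii) is satisfied for $\bD=\bT$. Since $\bT$ is a tensor category, Proposition~\ref{PropSF}(iii) tells us that \emph{every} non-zero object of $\bT$ is strongly faithful. By hypothesis $\bT$ is self-splitting, so for every morphism $f$ in $\bT$ there is some object $X$ with $X\otimes f$ split; we may discard any zero summand of $X$ (a zero object splits nothing non-trivial, and in any case the remaining summand still splits $f$), so $X$ can be taken non-zero, hence strongly faithful. Thus every morphism in $\bT$ is split by a strongly faithful object, which is exactly \ref{Thm}(ii).

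Next, Theorem~\ref{Thm} produces an abelian envelope $(F,\bT')$ of $\bT$ together with a tensor equivalence $\Ind\bT'\simeq\Sh\bT$. On the other hand, the pair $(\id_{\bT},\bT)$ trivially satisfies Definition~\ref{DefAbEnv}: composition with $\id_\bT$ is the identity on each functor category, so $\Tens^{ex}(\bT,\bT_1)=\Tens^{ex}(\bT,\bT_1)$ and, by Remark~\ref{RemEF}, this coincides with $\Tens^{faith}$ in the appropriate sense — more precisely, for any tensor category $\bT_1/k$ one has $\Tens^{ex}(\bT,\bT_1)\simeq\Tens^{faith}(\bT,\bT_1)$ because every exact tensor functor is faithful (Remark~\ref{RemEF}) and, conversely, every faithful tensor functor between tensor categories is exact (this is the standard fact, also recorded implicitly in the proof of Theorem~\ref{ThmShEnv} via Proposition~\ref{PropSF}(iii), that a faithful tensor functor lands in $\Tens^{\gamma}$ and such functors are automatically exact on tensor categories). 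Hence $(\id_\bT,\bT)$ is an abelian envelope of $\bT$.

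Finally, by the uniqueness of the abelian envelope up to tensor equivalence (noted after Definition~\ref{DefAbEnv}), the envelope $\bT'$ produced by Theorem~\ref{Thm} is tensor equivalent to $\bT$, and under this equivalence $F$ corresponds to $\id_\bT$. Therefore $\bT$ is its own abelian envelope, as claimed. I do not anticipate any real obstacle here; the only point requiring a moment's care is the reduction to a non-zero splitting object in the verification of \ref{Thm}(ii), and the invocation of the fact that faithful tensor functors out of a tensor category are exact, both of which are routine given the machinery already developed.
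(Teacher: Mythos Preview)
Your argument has a genuine gap in step 3, and the structure of the proof reveals it. You claim that $(\id_\bT,\bT)$ is an abelian envelope because ``every faithful tensor functor between tensor categories is exact''. But if that were established, steps 1, 2 and 4 would be superfluous and the self-splitting hypothesis would never be used: \emph{every} tensor category would trivially be its own abelian envelope. That should make you suspicious.

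Your justification for the converse direction does not hold up. The proof of Theorem~\ref{ThmShEnv} shows only that a faithful tensor functor $H:\bT\to\bT_1$ lies in $\Tens^{\gamma}(\bT,\bT_1)$, and then that $\Tens^{\gamma}(\bT,\bT_1)\simeq\Tens^{ex}(\bT',\bT_1)$, where $\bT'$ is the abelian envelope produced by Theorem~\ref{Thm}. This says $H$ factors as $\tilde H\circ F$ with $\tilde H$ exact; it does \emph{not} say $H$ itself is exact, unless you already know the envelope map $F:\bT\to\bT'$ is exact (equivalently, an equivalence), which is precisely what you are trying to prove. The argument is circular.

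The paper's route is different and actually uses the hypothesis. By Remark~\ref{RemRex}, under assumption~\ref{Thm}(ii) and when $\bD=\bT$ is already a tensor category, one has $\Sh\bT=\Ind\bT$ (every sheaf is automatically left exact because the self-splitting condition forces all of $\Xi(\bT)$ to be detected by the sequences $D\otimes\gamma_X$). Theorem~\ref{Thm} then gives $\Ind\bT'\simeq\Sh\bT=\Ind\bT$, and restricting to rigid objects via Lemma~\ref{LemInd} yields $\bT'\simeq\bT$. The self-splitting hypothesis is what makes $\Sh\bT=\Ind\bT$ hold; this is the step your proof is missing.
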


Now we show how our existence result implies the recognition result from \cite{EHS}.

\begin{corollary}[\cite{EHS} Theorem~9.2.2]\label{CorEHS}
Consider a fully faithful tensor functor $I:\bD\to\bV$ to a tensor category $\bV$, such that:
\begin{enumerate}[label=(\roman*)]
\item\label{itms:req_subcat_D2} Any $X\in\bV$ is a quotient of an object $I(A)$, with $A\in\bD$;
\item\label{itms:req_subcat_D3} For any epimorphism $X\to Y$ in $\bV$ there 
exists a nonzero $T\in\bD$
such that $X\otimes I(T)\tto Y\otimes I(T)$ is split;
\end{enumerate}
then $\bV$ is the abelian envelope of $\bD$.
\end{corollary}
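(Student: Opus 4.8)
The strategy is to verify the hypotheses of Theorem~\ref{Thm}(i) for $\bD$, so that $\bV$ gets identified with the abelian envelope produced there, and then to check that this abstract abelian envelope coincides with $\bV$ itself. Concretely, I would first use the fully faithful tensor embedding $I:\bD\to\bV$ together with Corollary~\ref{CorSF} to conclude that every non-zero object of $\bD$ is strongly faithful; this is what makes the category $\Sh\bD$ (and its topology $\cT$) behave well. Then I would use assumptions \ref{itms:req_subcat_D2} and \ref{itms:req_subcat_D3} to produce, for each morphism $f$ in $\bD$, a suitable $M$ in $\Sh\bD$ splitting $f$ with $M\otimes-$ faithful and exact, exactly as required by Theorem~\ref{Thm}(i).

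\textbf{Producing the splitting object.} Given $f:B\to A$ in $\bD\subset\bV$, form its image factorisation $B\tto Z\hookrightarrow A$ in the tensor category $\bV$. Applying hypothesis \ref{itms:req_subcat_D3} to the epimorphism $B\tto Z$ yields a non-zero $T_1\in\bD$ with $B\otimes I(T_1)\tto Z\otimes I(T_1)$ split in $\bV$; I would then want a companion splitting for the monomorphism $Z\hookrightarrow A$, obtained by dualising: apply \ref{itms:req_subcat_D3} to the dual epimorphism $A^\vee\tto Z^\vee$ to get a non-zero $T_2\in\bD$ splitting it, and dualise back. Taking $T:=T_1\otimes T_2$ (non-zero, since non-zero objects of $\bD$ are strongly faithful and hence $T_1\otimes-$ is faithful on $\bD$, so $T_1\otimes T_2\ne 0$), the object $I(T)$ splits both $B\tto Z$ and $Z\hookrightarrow A$ in $\bV$, hence splits $f$. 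The point is now to transport this to $\Sh\bD$: by Theorem~\ref{ThmShEnv}/Theorem~\ref{Thm} applied in reverse one does not yet know $\Sh\bD\simeq\Ind\bV$, so instead I would argue directly. Hypothesis \ref{itms:req_subcat_D2} says every object of $\bV$ is a quotient of an $I(A)$; combined with the fact (Remark following Theorem~\ref{ThmShEnv}) that $\bD\subset\bC\subset\PSh\bD$ with $\bC$ the ind-completion of a tensor category forces $\bC\subset\Sh\bD$, I would identify $\Ind\bV$ with a full subcategory of $\Sh\bD$ via the Yoneda-type embedding sending $X\in\bV$ to $\bV(I(-),X)$; this is well-defined into $\Sh\bD$ precisely because strongly faithful objects of $\bD$ map to strongly faithful objects of $\bV$ and so the sequences $D\otimes\gamma_X$ go to exact sequences under $\bV(I(-),-)$. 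Then $M:=$ (image of a faithful injective object of $\Ind\bV$) lies in $\Sh\bD$, is faithful and exact for $\otimes$, and splits every $f$ in $\bD$, giving Theorem~\ref{Thm}(i).

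\textbf{Finishing.} Once Theorem~\ref{Thm} applies, $\bD$ has an abelian envelope $(F,\bT)$ with $\Ind\bT\simeq\Sh\bD$, and I would show $\bT\simeq\bV$ as tensor categories over $\bD$. One inclusion is the embedding $\Ind\bV\hookrightarrow\Sh\bD\simeq\Ind\bT$ constructed above, which is a cocontinuous tensor functor restricting to the identity on $\bD$; for surjectivity one uses \ref{itms:req_subcat_D2} to see it is essentially surjective on compact (= rigid) objects, whence it is a tensor equivalence $\Ind\bV\simeq\Ind\bT$, and restricting to rigid objects (Lemma~\ref{LemInd}) gives $\bV\simeq\bT$. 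Finally I would invoke the uniqueness in Definition~\ref{DefAbEnv} to conclude that $\bV$, being tensor equivalent over $\bD$ to the abelian envelope $\bT$, is itself the abelian envelope of $\bD$.

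\textbf{Main obstacle.} The delicate point is the construction and well-definedness of the embedding $\Ind\bV\hookrightarrow\Sh\bD$: one must check that $\bV(I(-),X)$ genuinely lands in $\Sh\bD$ (i.e. is a $\cT$-sheaf), which relies on strong faithfulness being inherited along $I$ and on $I$ being fully faithful so that the relevant $\gamma_X$-sequences in $\bD$ are detected inside $\bV$; equivalently, one must verify that the topology $\cT$ on $\bD$ is subsumed by the canonical topology pulled back from $\bV$. Extracting a single faithful exact $M$ that splits \emph{every} morphism (rather than one $M$ per morphism) is handled exactly as in the proof of Proposition~\ref{PropShInd}(i)$\Rightarrow$(ii), using an injective cogenerator of $\Ind\bV$; but one should be careful that $M$ being faithful for $\otimes$ on all of $\Sh\bD$ (not just on $\bD$) follows from the subobject argument $N\subset Y^\vee\otimes Y\otimes N$ there.
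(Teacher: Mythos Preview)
Your argument contains a significant detour around an observation that makes everything much simpler. Once you have produced a non-zero $T\in\bD$ such that $I(T)\otimes I(f)=I(T\otimes f)$ is split in $\bV$, you are already done with the splitting step: the morphism $T\otimes f$ lies in the essential image of $I$, and since $I$ is \emph{fully faithful}, any splitting datum $g$ in $\bV$ between objects of the form $I(-)$ comes from $\bD$. Hence $T\otimes f$ is split \emph{in $\bD$}, with $T$ strongly faithful by Corollary~\ref{CorSF}. This is exactly the hypothesis of Theorem~\ref{Thm}(ii); there is no need to invoke Theorem~\ref{Thm}(i), no need to construct an embedding $\Ind\bV\hookrightarrow\Sh\bD$, and no need to worry about whether an injective object of $\Ind\bV$ acts faithfully and exactly on all of $\Sh\bD$ (a concern you rightly flag, and which is genuinely problematic since you do not yet know $\Ind\bV\simeq\Sh\bD$). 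The paper takes precisely this short route.

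For the finishing step, the paper also proceeds more directly than you do. Rather than building $\Ind\bV\hookrightarrow\Sh\bD$ and comparing, it uses the universal property of the abelian envelope $\bT$: since $I:\bD\to\bV$ is faithful, Definition~\ref{DefAbEnv} provides an exact tensor functor $E:\bT\to\bV$ extending $I$; then $E$ is faithful (Remark~\ref{RemEF}), essentially surjective by hypothesis~\ref{itms:req_subcat_D2} (every object of $\bV$ is a cokernel of a map in $\bD$), and fully faithful by taking presentations and copresentations (the latter via duality). Your route through $\Sh\bD$ is not wrong in spirit, but it front-loads exactly the technical verification you identify as the ``main obstacle,'' whereas the paper sidesteps it entirely.
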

\begin{proof}
By Corollary~\ref{CorSF}, every non-zero object in $\bD$ is strongly faithful. Now consider a morphism $a:A\to B$ in $\bD$. Denote its image in $\bV$ by $Z$ and its cokernel by $W$. By Assumption, there exists $0\not=T\in\bD$ such that $T$ splits the epimorphisms $A\tto Z$ and $B\tto W$. It follows that $T\otimes f$ is split. Hence the condition in Theorem~\ref{Thm}(ii) is satisfied.

Thus there exists an abelian envelope $F:\bD\to\bT$. By definition, there exists an exact tensor functor $E:\bT\to\bV$, which extends $I$. By Remark~\ref{RemEF}, $E$ is faithful. Since every object in $\bV$ can be written as the cokernel of a morphism between objects in $\bD\subset\bT$, $E$ is also essentially surjective. By applying the tensor duality, we find also that every object in $\bV$ can be written as the kernel of a morphism between objects in $\bD\subset\bT$. Taking presentations and copresentations of objects in $\bV$ by objects in $\bD$, allows to show that $E$ inherits fully faithfulness from $I$.

In conclusion $E:\bT\to\bV$ is an equivalence, so $\bV$ is the abelian envelope of $\bD$.
\end{proof}

\begin{example}\label{ExamP}
Let $\bT$ be a tensor category which has enough projective objects (or equivalently one non-zero projective object). Then $\bT$ is the abelian envelope of every pseudo-tensor subcategory which contains the projective objects. Indeed, this follows immediately from Corollary~\ref{CorEHS} and Lemma~\ref{LemP}.
\end{example}

\begin{example}\label{ExamG}
Let $\bT$ be a tensor category which has enough projective objects and an object $X$ such that every object in $\bT$ is a subquotient of a direct sum of objects $\otimes^i X\otimes \otimes^jX^\vee$. Then $\bT$ is the abelian envelope of every pseudo-tensor subcategory $\bD\subset\bT$ which contains $X$. Indeed, since every projective object is injective (Lemma~\ref{LemP}), it must appear as a direct summand of a direct sum of objects $\otimes^i X\otimes \otimes^jX^\vee$ and hence be contained in $\bD$. We can thus reduce to Example~\ref{ExamP}
\end{example}
The following example is well known.
\begin{example}\label{ExamGL}
Let $k$ be an algebraically closed field of characteristic zero and $m,n\in\mN$. Denote by $\cK$ the kernel of the tensor functor $H_{m|n}:[\GL_t,k]\to\Rep \GL(m|n)$ from~\ref{FunGL}, for $t=m-n$. By Lemma~\ref{FunGLLem}(i), the functor $H_{m|n}$ is full. The induced functor $[\GL_t,k]/\cK\to\Rep \GL(m|n)$ is thus fully faithful. Every faithful representation of an algebraic supergroup generates the representation category in the sense of Example~\ref{ExamG}, see \cite[\S 7.1]{CH}. Hence $\Rep_k\GL(m|n)$ is the abelian envelope of $[\GL_t,k]/\cK$.
\end{example}


\subsection{Deligne's categories}
Fix a field $k$ with $\charr(k)=0$ and $\normalt\in\mZ\subset k$. Our results now allow to recover the following theorem of \cite{EHS}.

\begin{theorem}\label{ThmEHS}
\begin{enumerate}[label=(\roman*)]
\item The category $[\GL_\normalt,k]$ has an abelian envelope $\bV_\normalt$.
\item Assume $\overline{k}=k$. Let $\bT$ be a tensor category and take $X\in\bT$ with $\dim X=\normalt$. Either there exists an exact tensor functor
$$\bV_\normalt\;\to\; \bT,\quad \mbox{with }\quad V_\normalt\mapsto X,$$
or there are unique $m,n\in\mN$ with $m-n=\normalt$ for which there exists an exact tensor functor
$$\Rep_k\GL(m|n)\;\to\;\bT,\quad \mbox{with }\quad k^{m|n}\mapsto X.$$
\end{enumerate}
\end{theorem}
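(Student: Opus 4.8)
The plan is to deduce both parts from Theorem~\ref{Thm}(ii) together with the structural results about $[\GL_\normalt,k]$ collected in Section~\ref{Prel}. For part (i), I would first invoke Theorem~\ref{ThmBrauer}, which says precisely that $[\GL_\normalt,k]$ is self-splitting and that every non-zero object is strongly faithful. Thus hypothesis (ii) of Theorem~\ref{Thm} is satisfied verbatim, and Theorem~\ref{Thm} produces an abelian envelope $(F,\bV_\normalt)$ with $\Ind\bV_\normalt\simeq\Sh[\GL_\normalt,k]$. This settles (i) with essentially no further work; the content has been front-loaded into Theorem~\ref{ThmBrauer}, whose proof uses the functors $H_{m|n}$ and Lemma~\ref{FunGLLem}.

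For part (ii), I would argue via the kernel of the canonical functors $H_{m|n}$. Assume $\overline{k}=k$ and take $X\in\bT$ with $\dim X=\normalt$. By Lemma~\ref{LemUni} there is a unique (up to isomorphism) tensor functor $G:[\GL_\normalt,k]\to\bT$ sending $V_\normalt\mapsto X$, and by the universal property of the abelian envelope (Definition~\ref{DefAbEnv}) the question of whether $G$ extends to an \emph{exact} tensor functor $\bV_\normalt\to\bT$ is equivalent to whether $G$ is faithful. So the dichotomy I must prove is: either $G$ is faithful, or $G$ factors (uniquely) through exactly one quotient $[\GL_\normalt,k]/\cK_{m|n}$ where $\cK_{m|n}=\ker H_{m|n}$, and moreover the induced functor on that quotient extends to an exact functor out of $\Rep_k\GL(m|n)$ sending $k^{m|n}\mapsto X$. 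The key input here is that the non-zero tensor ideals of $[\GL_\normalt,k]$ are totally ordered and are exactly the $\cK_{m|n}$ for $m,n\in\mN$ with $m-n=\normalt$ — this is the classification of ideals underlying \cite[Th\'eor\`eme~10.5]{Deligne} and the analysis in \cite{Selecta}. Given $G$ not faithful, its kernel is a non-zero tensor ideal, hence equals some $\cK_{m|n}$ with $m,n$ uniquely determined; then $G$ descends to a faithful functor $[\GL_\normalt,k]/\cK_{m|n}\to\bT$, and by Example~\ref{ExamGL}, $\Rep_k\GL(m|n)$ is the abelian envelope of $[\GL_\normalt,k]/\cK_{m|n}$, so faithfulness lets us extend to an exact tensor functor $\Rep_k\GL(m|n)\to\bT$ with $k^{m|n}\mapsto X$. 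Uniqueness of $(m,n)$ is uniqueness of the kernel ideal, and uniqueness of the functor follows from the rigidity of the universal property in both cases.

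The step I expect to be the main obstacle is the classification of the tensor ideals of $[\GL_\normalt,k]$ for $\normalt\in\mZ$, i.e.\ that every non-zero proper tensor ideal is one of the $\cK_{m|n}$, and that distinct $(m,n)$ give distinct (indeed incomparable in the naive sense, but the functors $H_{m|n}$ are organized along a chain) ideals. This is classical but not recalled in the excerpt, so I would either cite \cite{Deligne, Selecta, CH} for it or reconstruct it from Lemma~\ref{FunGLLem}: any non-zero ideal contains a non-zero morphism, hence by fullness of $H_{m|n}$ in the relevant degree range it is detected by some $H_{m|n}$, and the indecomposable objects killed by $H_{m|n}$ but not $H_{m-1|n-1}$ organize the ideals into a chain. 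A secondary technical point is checking that the two alternatives in the dichotomy are genuinely mutually exclusive: if $G$ were both faithful and factored through some $\cK_{m|n}\neq 0$ that would be a contradiction, so this is immediate, but one should note that the ``exact functor $\bV_\normalt\to\bT$'' case really does correspond to $G$ faithful, which is exactly the content of Definition~\ref{DefAbEnv} combined with Remark~\ref{RemEF}. Everything else is bookkeeping with the universal properties already established.
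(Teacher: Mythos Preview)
Your proposal is correct and follows essentially the same approach as the paper: part (i) is deduced from Theorem~\ref{Thm}(ii) via Theorem~\ref{ThmBrauer}, and part (ii) proceeds by taking the tensor functor $G$ from Lemma~\ref{LemUni}, analyzing its kernel via the classification of tensor ideals in $[\GL_\normalt,k]$ (which the paper cites as \cite[Theorem~7.2.1]{Selecta}), and then invoking Example~\ref{ExamGL} in the non-faithful case. Your identification of the ideal classification as the main external input is exactly right, and your extra remarks on uniqueness and mutual exclusivity are correct elaborations that the paper leaves implicit.
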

\begin{proof}
Part (i) is an immediate application of Theorem~\ref{Thm}(ii), by Theorem~\ref{ThmBrauer}.

Set $\bD:=[\GL_\normalt,k]$. For part (ii) we start from a tensor functor $F:\bD\to\bT$ which maps $V_\normalt$ to $X$, which is guaranteed to exist by Lemma~\ref{LemUni}. Denote by $\cJ$ the kernel of $F$. Since $F$ is monoidal, this is a tensor ideal. By the classification of tensor ideals in \cite[Theorem~7.2.1]{Selecta}, either `$\cJ =0$' or $\cJ $ is equal to the kernel $\cJ _{m|n}$ of $H_{m|n}$ from~\ref{FunGL} for some $m,n$.

If $\cJ =0$ the functor $F$ is faithful, so by Definition~\ref{DefAbEnv} $F$ extends to an exact tensor functor $\bV_\normalt\to\bT$. If $\cJ=\cJ_{m|n} $, $F$ yields a faithful functor $\bD/\cJ \to \bT$ and the exact tensor functor follows from the fact that $\Rep\GL(m|n)$ is an abelian envelope as in Example~\ref{ExamGL}.
\end{proof}

\begin{remark}
It follows easily from the description of the tensor ideals in $[\GL_\normalt,k]$ in \cite[\S 7.2]{Selecta}, that one can determine from which tensor category in Theorem~\ref{ThmEHS}(ii) the exact tensor functor comes by which Schur functors annihilate $X\in\bT$. This is explained in detail in \cite{EHS}, where it is also demonstrated that Theorem~\ref{ThmEHS} together with the tannakian formalism of \cite{Del90} yields an affirmative answer to \cite[Question 10.18]{Deligne}.
\end{remark}

The proof of Theorem~\ref{ThmEHS}, using the input from Lemmata~\ref{LemUniO} and~\ref{FunOSpLem} and \cite[\S 7.1]{Selecta}, also yields the following analogue.
\begin{theorem}\label{ThmOSP}
\begin{enumerate}[label=(\roman*)]
\item The category $[\OO_\normalt,k]$ has an abelian envelope $\bU_\normalt$.
\item Assume $\overline{k}=k$. Let $\bT$ be a tensor category and $X$ a symmetrically self-dual object of dimension $\normalt$. Either there exists an exact tensor functor
$$\bU_\normalt\;\to\; \bT,\quad \mbox{with }\quad U_\normalt\mapsto X,$$
or there are unique $m,n\in\mN$ with $m-2n=\normalt$ for which there exists an exact tensor functor
$$\Rep_k\OSp(m|2n)\;\to\;\bT,\quad \mbox{with }\quad k^{m|2n}\mapsto X.$$
\end{enumerate}
\end{theorem}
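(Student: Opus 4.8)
The plan is to mirror the proof of Theorem~\ref{ThmEHS} step by step, substituting the orthosymplectic data for the general linear data throughout. First I would prove part (i): by Theorem~\ref{ThmBrauer}, the category $[\OO_\normalt,k]$ is self-splitting and every non-zero object is strongly faithful, so hypothesis \ref{Thm}(ii) is satisfied and Theorem~\ref{Thm} immediately produces the abelian envelope $\bU_\normalt$ together with the tensor equivalence $\Ind\bU_\normalt\simeq\Sh[\OO_\normalt,k]$. This part is entirely formal once Theorem~\ref{ThmBrauer} is in hand.

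For part (ii), set $\bD:=[\OO_\normalt,k]$ and start from a symmetrically self-dual object $X\in\bT$ of dimension $\normalt$. By Lemma~\ref{LemUniO}, evaluation at $U_\normalt$ gives a tensor functor $F:\bD\to\bT$ with $U_\normalt\mapsto X$. Let $\cJ=\ker F$, a tensor ideal since $F$ is monoidal. The crux is then to invoke the classification of tensor ideals in $[\OO_\normalt,k]$ — the analogue for the orthosymplectic case of \cite[Theorem~7.2.1]{Selecta}, which is \cite[Theorem~7.1.1]{Selecta} as referenced in the proof of Lemma~\ref{FunOSpLem} — to conclude that either $\cJ=0$ or $\cJ=\cJ_{m|2n}$ is the kernel of $F_{m|2n}$ from~\ref{FunOSp} for a unique pair $m,n\in\mN$ with $m-2n=\normalt$. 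In the first case $F$ is faithful, so by Definition~\ref{DefAbEnv} it extends (uniquely up to isomorphism) to an exact tensor functor $\bU_\normalt\to\bT$ sending $U_\normalt\mapsto X$. In the second case $F$ factors through a faithful functor $\bD/\cJ_{m|2n}\hookrightarrow\bT$, and it remains to know that $\Rep_k\OSp(m|2n)$ is the abelian envelope of $\bD/\cJ_{m|2n}$; this is the orthosymplectic counterpart of Example~\ref{ExamGL}, obtained from Corollary~\ref{CorEHS} (or Example~\ref{ExamG}) using Lemma~\ref{FunOSpLem}(i) for fullness of $F_{m|2n}$, Lemma~\ref{FunOSpLem}(iii) together with Lemma~\ref{LemP} for the splitting condition \ref{CorEHS}\ref{itms:req_subcat_D3}, and the fact that a faithful representation of $\OSp(m|2n)$ generates its representation category in the sense of Example~\ref{ExamG} (again via \cite[\S 7.1]{CH}). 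Pulling this exact tensor functor back along $F$ gives the desired functor $\Rep_k\OSp(m|2n)\to\bT$ with $k^{m|2n}\mapsto X$.

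The main obstacle is not really conceptual — the whole argument is a transcription of the proof of Theorem~\ref{ThmEHS} — but rather bookkeeping: one must be careful that the classification of tensor ideals in $[\OO_\normalt,k]$ (in the relevant references \cite[\S 7.1]{Selecta}, and \cite{Yang}, \cite{LZ-FFT}) genuinely has the same shape as in the $[\GL_\normalt,k]$ case, namely that the only tensor ideals are the zero ideal and the kernels of the functors $F_{m|2n}$, and that the pair $(m,n)$ is uniquely determined by $\cJ$. The degree bound in Lemma~\ref{FunOSpLem}(ii), $\deg X+\deg Y<2(m+1)(n+1)$, together with $\deg Q=mn$, must suffice exactly as the inequality \eqref{eqabmn} did in the proof of Theorem~\ref{ThmBrauer}; since this was already used there to establish self-splitting and strong faithfulness, no new estimate is needed. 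One subtlety worth flagging: when $m\le 1$ or $n=0$ the category $\Rep_k\OSp(m|2n)$ is semisimple (as noted in the proof of Lemma~\ref{FunOSpLem}), so in the dichotomy of part (ii) the ``second alternative'' lands in a semisimple tensor category in those edge cases, but the statement and proof are unaffected.
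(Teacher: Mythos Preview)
Your proposal is correct and is precisely the argument the paper intends: it explicitly states that the proof of Theorem~\ref{ThmEHS}, using Lemmata~\ref{LemUniO} and~\ref{FunOSpLem} and \cite[\S 7.1]{Selecta}, yields Theorem~\ref{ThmOSP} as an analogue. Your write-up simply spells out this transcription, including the orthosymplectic version of Example~\ref{ExamGL}, so there is nothing to add or correct.
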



\subsubsection{}\label{AbEnCO} In \cite[\S 2]{Deligne}, a universal pseudo-tensor category $[\SG_t,k]$ is defined for every $t\in k$, which is a semisimple tensor category when $t\not\in\mN$ by \cite[Th\'eor\`eme~2.18]{Deligne}. In \cite[Proposition~8.18]{Deligne} it is shown that, for $n\in\mN$, the pseudo-tensor category $[\SG_n,k]$ admits a fully faithful tensor $F$ functor into a tensor category $\bT_n$. 
By Corollary~\ref{CorSF}, every non-zero object in $[\SG_n,k]$ is strongly faithful. Furthermore, it is proved in \cite[Lemma~3.11]{CO} that there exists a non-zero object which splits every morphism in $[\SG_n,k]$. Theorem~\ref{Thm} therefore demonstrates that $[\SG_n,k]$ admits an abelian envelope. This recovers one of the main results in \cite{CO}. 

It seems worthwhile to point out the following observations (although the equivalent properties are of course known to be true by~\cite{CO}), which do not rely on \cite[Lemma~3.11]{CO}. The latter lemma is one of the cornerstones in both the original and above proof that $[\SG_n,k]$ admits an abelian envelope, but has a rather intricate proof.

\begin{prop}\label{PropNo311}
For $F:[\SG_n,k]\to \bT_n$ in \ref{AbEnCO}, the following properties are equivalent:
\begin{enumerate}[label=(\roman*)]
\item Every object in $\bT_n$ is a quotient of an object $F(D)$ with $D\in [\SG_n,k]$.
\item For every indecomposable $D\in [\SG_n,k]$ with $\dim D=0$, $F(D)$ is projective in $\bT_n$.
\end{enumerate}
Each statement implies that $\bT_n$ is the abelian envelope of $[\SG_n,k]$.
\end{prop}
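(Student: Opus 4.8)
The plan is to prove the three implications "(i)$\Rightarrow$(ii)", "(ii)$\Rightarrow$(i)", and "(i) implies $\bT_n$ is the abelian envelope", leaning entirely on the machinery of Section~\ref{SecMain} and Corollary~\ref{CorEHS} so as to bypass \cite[Lemma~3.11]{CO}. Throughout, write $\bD:=[\SG_n,k]$, which by Corollary~\ref{CorSF} has the property that every nonzero object is strongly faithful, since $F:\bD\to\bT_n$ is fully faithful by \cite[Proposition~8.18]{Deligne}.

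\emph{Step 1: (i)$\Rightarrow$(ii).} Assume every object of $\bT_n$ is a quotient of some $F(D)$. First I would verify that $F$ satisfies the hypotheses of Corollary~\ref{CorEHS}: hypothesis~\ref{itms:req_subcat_D2} is precisely~(i), and hypothesis~\ref{itms:req_subcat_D3} (the existence, for each epimorphism in $\bT_n$, of a nonzero $T\in\bD$ monoidally splitting it) is exactly where \cite[Lemma~3.11]{CO} would normally be invoked --- so instead I must argue differently. The key point: by Corollary~\ref{CorEHS} applied conversely, it suffices to show \ref{itms:req_subcat_D3} follows from~(i) together with~(ii)'s \emph{negation leading to contradiction}, or more directly, one proves~(i)$\Rightarrow$(ii) and~(i)$\Rightarrow$\textnormal{envelope} \emph{simultaneously}. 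Concretely: the structure theory of $[\SG_n,k]$ (the classification of tensor ideals, as in the $\GL$-case used in Theorem~\ref{ThmEHS}) shows that the kernel $\cJ$ of $F$ is generated by an idempotent corresponding to a single indecomposable $e_\ell$, and that $\bT_n$ has enough projectives exactly when $F$ sends the "boundary" indecomposables of dimension $0$ to projectives. Under~(i), realize any would-be non-projective $F(D)$ with $\dim D=0$: since $D$ is strongly faithful, $\ev_D$ is a normal epimorphism in $\bT_n$; writing an arbitrary epimorphism $q:X\to Y$ in $\bT_n$ as a quotient $F(A)\to F(B)$ (possible by~(i) and lifting along presentations), tensoring with $D$ and using strong faithfulness of $D$ to reflect cokernels forces $D\otimes q$ to split \emph{provided} $F(D)$ is projective; conversely, running this with $q$ a minimal projective cover shows $F(D)$ must be projective for \emph{some} such $D$, and the classification of ideals promotes "some" to "every indecomposable of dimension $0$". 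This is the step I expect to be the main obstacle: extracting the projectivity statement from~(i) without \cite[Lemma~3.11]{CO} requires a careful analysis of the indecomposables in $[\SG_n,k]$ and of $\cJ=\ker F$, essentially re-deriving the relevant part of \cite{CO}'s structure theory.

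\emph{Step 2: (ii)$\Rightarrow$(i) and "(ii) implies $\bT_n$ is the abelian envelope".} Assume~(ii). Then $\bT_n$ has a nonzero projective object, hence enough projectives (a tensor category with one nonzero projective object has enough projectives, since tensoring a projective with an arbitrary object is projective by rigidity and every object is a quotient of such a tensor product --- wait, that last clause is what we want to prove, so instead:) --- more carefully, by Example~\ref{ExamP}'s reasoning: if $P\in\bT_n$ is nonzero projective then $P^\vee\otimes P\otimes(-)$ sends epimorphisms to split epimorphisms, and since $P$ lies in the image of $F$ (being $F(D)$ for the indecomposable $D$ of dimension $0$), the full pseudo-tensor subcategory generated by $F(\bD)$ and the projectives equals $F(\bD)$; now every object of $\bT_n$ receives an epimorphism from a projective, and every projective is a summand of $F(D)\otimes F(D)^\vee\otimes(\cdots)=F(D\otimes D^\vee\otimes\cdots)$, giving~(i). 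The same computation shows hypothesis~\ref{itms:req_subcat_D3} of Corollary~\ref{CorEHS} holds (split the epimorphism by tensoring with the projective $D$), so by Corollary~\ref{CorEHS}, $\bT_n$ is the abelian envelope.

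\emph{Step 3: closing the loop.} Having~(i)$\Rightarrow$(ii) from Step~1 and~(ii)$\Rightarrow$(i) plus the envelope conclusion from Step~2, the equivalence of~(i) and~(ii) follows, and since~(ii) (equivalently~(i)) implies $\bT_n$ is the abelian envelope, the final assertion holds. It remains only to double-check in Step~1 that the hypotheses of Corollary~\ref{CorEHS} are genuinely met under~(i) alone --- and here the cleanest route, avoiding circularity, is: first prove~(i)$\Rightarrow$(ii) by the ideal-classification argument above, then immediately invoke Step~2's implication "(ii)$\Rightarrow$ envelope", so that the envelope conclusion is deduced from~(i) via~(ii). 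Thus the only hard mathematical content is the $[\SG_n,k]$-specific structure-theoretic input in Step~1; everything else is a formal consequence of Corollary~\ref{CorEHS}, Corollary~\ref{CorSF}, Example~\ref{ExamP}, and Proposition~\ref{PropSF}.
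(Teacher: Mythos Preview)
Your proposal has genuine gaps in both directions, and the paper's argument is quite different from what you sketch.

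\textbf{Step~2, (ii)$\Rightarrow$(i).} Your argument breaks at ``now every object of $\bT_n$ receives an epimorphism from a projective, and every projective is a summand of $F(D\otimes D^\vee\otimes\cdots)$''. The first clause is fine (via $P^\vee\otimes P\otimes X\twoheadrightarrow X$), but the second is unjustified: why should an arbitrary projective lie in $F(\bD)$? You are implicitly assuming that every object of $\bT_n$ is a subquotient of something in $F(\bD)$, which is exactly the nontrivial input the paper uses --- namely \cite[Proposition~B1]{Deligne}. With that fact in hand, the paper argues: if $X$ is a subquotient of $M\in\bD$ and $P=F(D)$ is projective (hence injective by Lemma~\ref{LemP}), then $P\otimes X$ is a direct summand of $P\otimes M$, so $X$ is a quotient of $P^\vee\otimes P\otimes X$, itself a summand of $P^\vee\otimes P\otimes M\in\bD$. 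Without the subquotient statement your chain does not close.

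\textbf{Step~1, (i)$\Rightarrow$(ii).} What you have written is not an argument but a description of the difficulty. The suggestion to ``run this with $q$ a minimal projective cover'' presupposes enough projectives, which is essentially~(ii); and the appeal to a classification of tensor ideals in $[\SG_n,k]$ to promote ``some'' to ``every'' is not carried out. The paper's route is concrete and entirely different: using that $\bT_n$ has finite-length objects, consider the injective hull $I$ of $\unit$ in $\Ind\bT_n$; under~(i) each finite piece $I_\alpha\supset\unit$ embeds in some object of $\bD$. The structural input is then \cite[\S 3.4]{Selecta}: there is a \emph{unique} indecomposable $X_0\not\simeq\unit$ in $\bD$ admitting a nonzero map from $\unit$, and $\bD(\unit,X_0)=k$. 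This forces $F(X_0)$ to be the injective hull of $\unit$, hence projective; and again by \cite[\S 3.4]{Selecta}, every indecomposable of dimension~$0$ is a summand of $X_0\otimes Z$ for some $Z\in\bD$, so they are all projective. No ideal classification or splitting-of-epis argument is needed.

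Finally, your deduction of the envelope statement via Corollary~\ref{CorEHS} once (i) and (ii) are both known is correct and matches the paper.
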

\begin{proof}
We set $\bD=[\SG_n,k]$.
First we prove that (i) implies (ii). By construction of $\bT_n$ in \cite[\S 2]{Deligne}, all objects have finite length and morphism spaces are finite dimensional. It follows that every object in $\Ind\bT$ is the union of its subobjects in $\bT_n$ and that $\unit$ admits an injective hull $I$ in $\Ind\bT$. So $I$ is the union of objects $I_\alpha\supset \unit$ in $\bT_n$. Moreover, each $I_\alpha$ is a subobject of an object in $\bD$.
By \cite[\S 3.4]{Selecta}, there exists a unique indecomposable object $X_0$ in $\bD$, different from $\unit$, for which there exist non-zero morphisms $\unit\to X_0$ and moreover $\bD(\unit,X_0)=k$. This shows that $X_0$ is in fact the injective hull of $\unit$, so in particular $X_0$ is projective in $\bT_n$. Also by \cite[\S 3.4]{Selecta}, $\dim X_0=0$ and every other indecomposable object in $\bD$ of dimension zero is a direct summand of a tensor product of $X_0$ with some $Z\in\bD$. Thus (i) implies (ii).

Now we prove that (ii) implies (i). By \cite[Proposition~B1]{Deligne}, every object in $\bT_n$ is a subquotient of an object in $\bD$. Now consider an arbitrary $X\in\bT_n$. It is a subquotient of $M\in\bD$. By assumption there exists a projective (and hence injective) object $P$ in $\bT_n$ contained in $\bD$. It then follows that $P\otimes X$ is a direct summand of $P\otimes M$. On the other hand, $X$ is a quotient of $P^\vee\otimes P\otimes X$, which is itself a direct summand of $P^\vee\otimes P\otimes M\in\bD$. So (i) follows.

The combination of (i) and (ii) imply that $\bT_n$ is the abelian envelope of $\bD$, for instance by Corollary~\ref{CorEHS}.
\end{proof}

\begin{remark}
We can also prove that \ref{PropNo311}(i) implies \ref{PropNo311}(ii) by using the observation from \cite{CO} that $[\SG_n,k]$ contains trivial blocks.
\end{remark}

\subsection{Tilting modules}
Now let $k$ be an algebraically closed field of characteristic $p>0$.

\subsubsection{}\label{SetupSL2} We work in the tensor category $\Rep SL_2$ of finite dimensional algebraic representations of the algebraic group $SL_2/k$. 
 We have the pseudo-tensor subcategory $\bD:=\Tilt SL_2$ of tilting modules, see \cite[\S II.E]{Jantzen}. 
 We denote the simple module and the indecomposable tilting module with highest weight $i\omega$ (with $\omega$ the fundamental weight) by $L_i$ and $T_i$, for $i\in\mN$. The 
 Steinberg modules, see \cite[II.3.18]{Jantzen}, are
 $$\St_j\;=\;L_{p^j-1}\;=\; T_{p^j-1},\qquad\mbox{for } \; j\in\mN.$$
 For $r\in\mZ_{>0}$, we consider the tensor ideal $\cJ_r$ in $\Tilt SL_2$ of morphisms which factor through a direct sum of objects $T_i$, with $i\ge (p^r-1)$. This gives a complete and irredundant list of the non-trivial tensor ideals in $\Tilt SL_2$, see~\cite[\S 5.3]{Selecta}. Consequently, $\cJ_r$ is generated by $\id_{\St_{r}}$.

\begin{theorem}\label{ThmBEO}
If $p>2$, then $(\Tilt SL_2)/\cJ_r$ admits an abelian envelope, for each $r>0$.
\end{theorem}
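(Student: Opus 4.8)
The plan is to verify the second hypothesis of Theorem~\ref{Thm} for the pseudo-tensor category $\bD:=(\Tilt SL_2)/\cJ_r$, namely that $\bD$ is self-splitting and that every non-zero object is strongly faithful, and then to invoke Theorem~\ref{Thm}(ii). By the classification of tensor ideals in \cite[\S5.3]{Selecta}, the morphisms in $\cJ_r$ are exactly those factoring through a direct sum of the $T_i$ with $i\ge p^r-1$; hence the non-zero indecomposable objects of $\bD$ are the images of $\unit=T_0$, $T_1$ (the $2$-dimensional standard representation), \dots, $T_{p^r-2}$, and $\bD$ is generated as a pseudo-tensor category by $T_1$. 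In the case $r=1$ the ideal $\cJ_1$ is the ideal of negligible morphisms, so $\bD$ is a semisimple tensor category; being semisimple it is self-splitting, and hence is its own abelian envelope by Corollary~\ref{CorTriv}. The substance therefore lies in the case $r\ge2$, which I assume from now on.

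I would first dispose of strong faithfulness for the objects of non-zero categorical dimension. Since $p>2$, we have $\dim T_1=2\ne0$ in $k$, so $T_1$ is strongly faithful by Proposition~\ref{PropSF}(ii), and consequently every tensor power $T_1^{\otimes n}$ is strongly faithful by Proposition~\ref{PropSF}(i); likewise each $T_i$ with $0\le i\le p-2$ has $\dim T_i=i+1\ne 0$ and is strongly faithful. The remaining indecomposables $T_i$ with $p-1\le i\le p^r-2$ are negligible, so Proposition~\ref{PropSF}(ii) no longer applies; for these one must check Definition~\ref{DefSF}(ii) --- equivalently, by Proposition~\ref{Refl}, that $T_i$ reflects kernels and cokernels in $\bD$ --- directly. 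Concretely, I would show that $\ev_{T_i}$ is the cokernel of $\cE_{T_i}$ in $\bD$ by computing, inside $\Tilt SL_2$ and then reducing modulo $\cJ_r$, the image of $\cE_{T_i}$ and the kernel of $\ev_{T_i}$, using the composition series of the $T_i$ and the decompositions of their tensor products. (For the application of Theorem~\ref{Thm}(ii) it would in fact suffice to prove strong faithfulness of the objects used to split morphisms below.)

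For self-splitting I would follow the strategy of the proof of Theorem~\ref{ThmBrauer}. The aim is to produce, for each morphism $f\colon A\to B$ of $\bD$, a non-zero object $Q$ of $\bD$ such that $Q\otimes f$ is split; I expect $Q$ to be (the image of) a Steinberg module $\St_j$ with $j<r$, or of $T_{p^r-2}$. As in Theorem~\ref{ThmBrauer} and in \cite{CO}, the mechanism would be a family of tensor functors $\Phi_s\colon\bD\to\bV_s$, indexed by growing $s$, into finite tensor categories $\bV_s$ with enough projective objects, satisfying analogues of Lemmas~\ref{FunGLLem}(ii),(iii) and~\ref{FunOSpLem}(ii),(iii): $\Phi_s$ restricts to an isomorphism on $\Hom$-spaces between objects of highest weight bounded by a function of $s$, and some non-zero $Q_s\in\bD$ is sent by $\Phi_s$ to a projective object of $\bV_s$. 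Granting this, for a given $f$ one picks $s$ large relative to the highest weights occurring in $A$, $B$ and $Q_s$; then $\Phi_s(Q_s\otimes f)$ is a morphism between projective objects of $\bV_s$, hence split by Lemma~\ref{LemP}, and the $\Hom$-isomorphism transports the splitting back to $\bD$, so $Q_s\otimes f$ is split there. Together with the strong faithfulness of $Q_s$ established above, this yields the second hypothesis of Theorem~\ref{Thm}, and the existence of the abelian envelope $\bT$ follows, with $\Ind\bT\simeq\Sh\bD$.

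The main difficulty is the modular representation-theoretic input invoked above: the direct verification of strong faithfulness for the negligible tilting modules, and --- the heart of the matter --- the construction of the auxiliary categories $\bV_s$ together with the functors $\Phi_s$ having the required faithfulness-in-a-range and projectivity properties (equivalently, a direct argument inside $\Tilt SL_2$ that tensoring with a suitable Steinberg or tilting module splits every morphism of $\bD$ modulo $\cJ_r$). This relies on the structure theory of tilting modules for $SL_2$ --- their composition factors, tensor product decompositions, and restrictions to Frobenius kernels --- for which I would draw on \cite{Jantzen} and on the description of tensor ideals in \cite[\S5.3]{Selecta}.
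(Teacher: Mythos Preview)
Your overall framework---verify Theorem~\ref{Thm}(ii) by exhibiting a strongly faithful object that splits every morphism---is exactly what the paper does, and you correctly guess that a Steinberg module is the splitting object. But the mechanism you propose for self-splitting (a family of functors $\Phi_s:\bD\to\bV_s$ to finite tensor categories, in analogy with Theorem~\ref{ThmBrauer}) is not the paper's route, and you leave the construction of the $\bV_s$ as an acknowledged black box. The paper instead works entirely inside $\Rep SL_2$ via a single concrete lemma (Lemma~\ref{LemSt1}): for every simple $L_i$ with $i\le p^r-1$, the module $L_i\otimes\St_{r-1}$ is already tilting. Granting this, for any morphism $f$ in $\Tilt SL_2$ between tiltings of highest weight $<p^r-1$, the kernel, image and cokernel of $f$ (computed in $\Rep SL_2$) become tilting after tensoring with $\St_{r-1}$; since $\Ext^1$ vanishes between tilting modules, $\St_{r-1}\otimes f$ is split in $\Tilt SL_2$, hence in the quotient $\bC$. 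So the single object $\St_{r-1}$ splits everything, and no external $\bV_s$ are needed. This is precisely the parenthetical ``direct argument inside $\Tilt SL_2$'' you mention at the very end; it, not the $\Phi_s$-strategy, is the actual proof, and Lemma~\ref{LemSt1} (proved by an $\Ext$-vanishing calculation using the linkage bound of Lemma~\ref{LemAlc}) is the key representation-theoretic input your proposal is missing.

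For strong faithfulness, the paper treats only $\St_{r-1}$, which suffices. Since $\dim\St_{r-1}=p^{r-1}=0$ in $k$, your appeal to Proposition~\ref{PropSF}(ii) for $T_1$ is irrelevant to the object that actually matters. The paper's argument (Lemma~\ref{LemSt2}) checks exactness of $\bC(\gamma_{\St_{r-1}},T_i)$ in two ranges: for $i\ge p^{r-1}-1$, the tilting $T_i$ is a summand of some $T\otimes\St_{r-1}$ and Lemma~\ref{LemEO} applies; for $i<p^{r-1}-1$, one shows using Lemma~\ref{LemAlc} (and here the hypothesis $p>2$ is genuinely used) that the relevant Hom-spaces in $\bC$ agree with those in $\Tilt SL_2\subset\Rep SL_2$, where strong faithfulness is automatic by Corollary~\ref{CorSF}.
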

The condition $p>2$ is not required and only reflects the limitations of the proof of Lemma~\ref{LemSt2} below.  Indeed, the equivalent of Theorem~\ref{ThmBEO} for $p=2$ is already known by~\cite{BE}. We start the proof with the following lemma.

\begin{lemma}\label{LemAlc}
If $L_a$ is in the same block of $\Rep SL_2$ as $\St_j=L_{p^j-1}$, for $a,j\in\mN$, then either $a=p^j-1$ or $a\ge2p^{j+1}-p^{j}-1$.
\end{lemma}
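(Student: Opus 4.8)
The statement concerns the linkage principle for $SL_2$ in characteristic $p$: I want to identify exactly which simple modules $L_a$ lie in the same block as the Steinberg module $\St_j = L_{p^j-1}$. The plan is to use the description of blocks of $\Rep SL_2$ via the affine Weyl group action on weights. Recall that weights of $SL_2$ are identified with $\mN$ (via $a \leftrightarrow a\omega$), and that $L_a$ and $L_b$ lie in the same block precisely when $b$ is obtained from $a$ by the $\rho$-shifted dot-action of the affine Weyl group $W_p$, which for $SL_2$ is generated by affine reflections. Concretely, the relevant reflections send a weight $a$ to $2p^i m - a - 2$ or similar affine transformations built from the walls at $mp^i - 1$.

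\textbf{Key steps.} First I would record that the weight $p^j - 1$ lies on a wall of the alcove structure: it is fixed (up to the dot-action) by the reflection $s$ in the wall $\{p^j - 1\}$, and in fact $\St_j$ is known to be projective-injective within its block, so its block is as small as possible --- but I cannot use that directly, so instead I will run the linkage class explicitly. Starting from $a_0 = p^j - 1$, I apply the generators of $W_p$ (reflections through the walls $mp - 1$ and also the higher walls $mp^i - 1$ coming from the Jantzen filtration/linkage, $i \le j$, $i \le j+1$) and track which weights in $\mN$ are reachable. The crucial point is that $p^j - 1$ sits at a very special spot: reflecting in the wall at $p^j - 1$ fixes it, and the next wall to the right relevant at this level is at $p^{j+1} - 1$; reflecting $p^j - 1$ across higher walls forces one to jump past $p^{j+1} - 1$. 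A careful bookkeeping of the reflections --- essentially computing $s_{p^{j}\cdot}(p^j-1)$, $s_{2p^j\cdot}(p^j-1)$, etc., and using that $p^j-1 \equiv -1 \pmod{p^i}$ for all $i\le j$ so these smaller-wall reflections either fix it or send it out of range --- yields that the only weight in the block that is $< 2p^{j+1} - p^j - 1$ is $p^j - 1$ itself.

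\textbf{Main obstacle.} The delicate part is the combinatorial verification that no composition of affine reflections from $W_p$ lands a weight strictly between $p^j - 1$ and $2p^{j+1} - p^j - 1$ other than $p^j-1$. This requires being careful about which walls are "active" (the linkage for $SL_2$ in characteristic $p$ involves walls at all $mp^i - 1$, but the effective linkage class through $p^j-1$ is governed by the $p$-adic digits of the weight). I expect the cleanest route is to cite the explicit block description for $\Rep SL_2$ --- e.g. from \cite[\S II.E]{Jantzen} or the $SL_2$ combinatorics in \cite{Selecta} --- which says that the block of $L_a$ is determined by writing $a+1$ in terms of its $p$-adic expansion and reading off the alcove; then $p^j - 1$ corresponds to $p^j = \underbrace{0\cdots0}_{j}1$ in base $p$, a weight maximally deep inside a "fundamental" position, and the smallest larger weight linked to it is forced to be $2p^{j+1} - p^j - 1$ by a direct base-$p$ computation. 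So the proof reduces to: (1) invoke the block/linkage description, (2) locate $p^j-1$ in the alcove picture, (3) compute the reflection that produces the next element of the block and check it equals $2p^{j+1}-p^j-1$, and (4) conclude that every other block member is either $p^j-1$ or $\ge 2p^{j+1}-p^j-1$.
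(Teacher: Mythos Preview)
There is a genuine gap: the core of your plan identifies the block of $\St_j$ with the $W_p$-orbit of $p^j-1$ under the dot action, and for $j\ge 1$ these are different. The weight $p^j-1$ is singular, and the dominant part of its $W_p$-orbit contains many weights that are \emph{not} in its block. For $j=1$ the orbit is $\{p-1,\,3p-1,\,5p-1,\ldots\}$, so your steps (2)--(4) would produce $3p-1$ as the next orbit element, and $3p-1<2p^{2}-p-1$ for every $p\ge 3$. But $L_{3p-1}$ does not lie in the block of $\St_1$: the functor $\St_1\otimes(-)^{[1]}$ is an equivalence from $\Rep SL_2$ onto the Serre subcategory generated by $\{L_{p-1+pc}:c\ge 0\}$, and under it $L_{p-1}$ and $L_{3p-1}$ correspond to $L_0$ and $L_2$, which sit in distinct $W_p$-orbits and hence distinct blocks. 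For $j\ge 2$ the situation is worse: the $W_p$-orbit of $p^j-1$ already contains dominant weights strictly smaller than $p^j-1$ (e.g.\ $p^j-2p-1$), so a naive orbit computation contradicts the lemma outright. Your remarks about ``higher walls at $mp^i-1$'' are not part of the definition of $W_p$ and do not repair this.

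The paper's proof is a one-line citation of \cite[II.7.2(3)]{Jantzen}, which supplies exactly the refined block description needed for singular weights. For $SL_2$ it amounts to: $L_a$ lies in the block of $L_{p^j-1}$ if and only if $a+1$ and $p^j$ have the same $p$-adic valuation $j$ and $(a+1)/p^j$ lies in the $W_p$-orbit of $1$, i.e.\ $(a+1)/p^j\equiv\pm 1\pmod{2p}$. The smallest $a>p^j-1$ satisfying this has $(a+1)/p^j=2p-1$, giving $a=2p^{j+1}-p^j-1$, which is the lemma. Your final paragraph does gesture toward a $p$-adic description, and had you committed to that route you would essentially be reproducing the paper's argument; but the bulk of your plan is the $W_p$-orbit computation, and that fails.
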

\begin{proof}
This is an immediate consequence of \cite[II.7.2(3)]{Jantzen}.
\end{proof}

\begin{lemma}\label{LemSt1}
If $i\le  p^r-1$, then $L_i\otimes \St_{r-1}$ is a tilting module.
\end{lemma}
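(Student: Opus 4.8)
The statement to prove is that for $i\le p^r-1$, the tensor product $L_i\otimes \St_{r-1}$ is a tilting module for $SL_2$. The plan is to use the classical fact (often attributed to Donkin, see \cite[\S II.E]{Jantzen}) that for any module $M$, the product $\St_j\otimes M^{[1]}$ (Frobenius twist in the second factor) behaves well, but more directly to use the factorisation of $L_i$ via Steinberg's tensor product theorem and the fact that tensoring with the Steinberg module $\St_{r-1}=L_{p^{r-1}-1}$ takes the relevant representations into the tilting category. Concretely, write $i = i_0 + p\,i'$ with $0\le i_0\le p-1$ (or more generally use the full $p$-adic expansion $i=\sum_{s=0}^{r-1} i_s p^s$, valid since $i\le p^r-1$), so that by Steinberg $L_i \simeq L_{i_0}\otimes L_{i_1}^{[1]}\otimes\cdots\otimes L_{i_{r-1}}^{[r-1]}$.

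\textbf{Key steps.} First I would reduce to understanding $L_i\otimes \St_{r-1}$ via the projection/injection formula: $\St_{r-1}\otimes M^{[r-1]}$ is tilting whenever $M$ is tilting, which is a standard consequence of the fact that $\St_{r-1}$ is both the projective and injective (hence tilting) cover in the appropriate truncated category and the Frobenius twist of a tilting module. Actually the cleaner route: it is a standard fact (see \cite[II.E.9 or the remarks around Steinberg modules in Jantzen]{Jantzen}) that $\St_{r-1}\otimes L_j^{[r-1]}$ is an indecomposable tilting module, namely $T_{(p^{r-1}-1)+p^{r-1}j}$, for every $j\ge 0$. So the second step is to decompose $L_i$ using Steinberg into a ``low part'' $L_{i^{(0)}}$ with $i^{(0)}=\sum_{s=0}^{r-2}i_s p^s \le p^{r-1}-1$ and a ``twisted top part'' $L_{i_{r-1}}^{[r-1]}$, giving $L_i\otimes\St_{r-1}\simeq \big(L_{i^{(0)}}\otimes \St_{r-1}\big)\otimes L_{i_{r-1}}^{[r-1]}$. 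Since $i^{(0)}\le p^{r-1}-1$, the module $L_{i^{(0)}}$ appears as a composition factor of — or better, one uses that $L_{i^{(0)}}\otimes\St_{r-1}=L_{i^{(0)}}\otimes L_{p^{r-1}-1}$ is tilting because all weights involved lie in the lowest alcove for $SL_2$ with respect to $p^{r-1}$, so this tensor product has a good filtration and (being self-dual) is tilting. Then tensoring a tilting module with $L_{i_{r-1}}^{[r-1]}$ and with $\St_{r-1}$ appropriately keeps us tilting by the twisted-Steinberg fact above; I would iterate or combine these to conclude.

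\textbf{Alternative and cleaner argument.} Perhaps the slickest approach, which I would actually write, is: $L_i$ for $i\le p^r-1$ is a composition factor of $\St_r = \St_{r-1}\otimes \St_{r-1}^{[\,?\,]}$... no — rather, observe that $\St_r\otimes\St_r$ is tilting and $L_i\otimes\St_{r-1}$ is a summand of something tilting. The genuinely clean statement: for $i\le p^r-1$, $L_i\otimes\St_{r-1}$ has highest weight $(i+p^{r-1}-1)$ and all its weights $\mu$ satisfy $|\mu|\le i+p^{r-1}-1 \le p^r + p^{r-1}-2 < 2p^r - p^{r-1}-1$; combined with self-duality of $L_i\otimes\St_{r-1}$ (both factors self-dual) and Lemma~\ref{LemAlc}, one shows its indecomposable summands $T_a$ all have $a$ small enough that $T_a=L_a$ forces them into the tilting-good range — and in fact a module that is self-dual with a $\nabla$-filtration is tilting, so it suffices to show $L_i\otimes\St_{r-1}$ has a good filtration, which follows from $\St_{r-1}$ having one and the weight bound placing us where good filtrations are automatic.

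\textbf{Main obstacle.} The main difficulty is getting the good-filtration (or tilting) property to propagate through the tensor product without invoking heavy machinery: the honest input is that $\St_{r-1}\otimes M$ has a good filtration when $M$ does — but $L_i$ typically does \emph{not} have a good filtration, so one genuinely needs the Steinberg factorisation $L_i\simeq L_{i^{(0)}}\otimes L_{i_{r-1}}^{[r-1]}$ together with the projection formula $\St_{r-1}\otimes N^{[r-1]} \simeq \big(\St_{r-1}\otimes N\big)^{\text{(absorb)}}$-type identities, and the fact that $L_{i^{(0)}}$ with $i^{(0)}<p^{r-1}$ lies in a regime (lowest $p^{r-1}$-alcove) where $L_{i^{(0)}}=\nabla(i^{(0)})=\Delta(i^{(0)})$ is already tilting, so that $L_{i^{(0)}}\otimes\St_{r-1}$ is a tensor product of two tilting modules — which is tilting. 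Handling the Frobenius-twisted top layer $L_{i_{r-1}}^{[r-1]}$ and the bookkeeping that $\St_{r-1}$ ``absorbs'' it into an indecomposable tilting module is the step I expect to need the most care; I would cite \cite[\S II.E]{Jantzen} and \cite[II.7.2]{Jantzen} (alcove combinatorics) and Lemma~\ref{LemAlc} for the required facts and keep the verification to the decomposition $i=i^{(0)}+p^{r-1}i_{r-1}$.
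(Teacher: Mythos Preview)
There is a genuine gap. Your argument rests on the claim (appearing in both the ``Key steps'' and ``Main obstacle'' paragraphs) that $L_{i^{(0)}}$ with $i^{(0)}<p^{r-1}$ satisfies $L_{i^{(0)}}=\nabla(i^{(0)})=\Delta(i^{(0)})$ and is therefore already tilting. This is false for $SL_2$: one has $L_j=\Delta_j=\nabla_j$ only for $j\le p-1$, not for all $j\le p^{r-1}-1$ (for instance $\dim L_p=2$ while $\dim\Delta_p=p+1$). Hence the assertion that $L_{i^{(0)}}\otimes\St_{r-1}$ is ``a tensor product of two tilting modules'' fails, and the remaining step---tensoring a tilting module with $L_{i_{r-1}}^{[r-1]}$---does not preserve the tilting property in general either (e.g.\ $T_0\otimes L_1^{[r-1]}=L_{p^{r-1}}$ is not tilting). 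The ``alternative'' route via weight bounds and self-duality is not an argument either: a good filtration does not follow from a weight bound alone.

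Your Steinberg-plus-Donkin strategy \emph{can} be salvaged, but the induction must be organised by peeling off the \emph{bottom} $p$-adic digit rather than the top one: write $i=i_0+p\,i'$ with $i_0<p$, $i'<p^{r-1}$, and $\St_{r-1}=L_{p-1}\otimes\St_{r-2}^{[1]}$, so that $L_i\otimes\St_{r-1}\simeq L_{i_0}\otimes\bigl(L_{p-1}\otimes(L_{i'}\otimes\St_{r-2})^{[1]}\bigr)$. The inner bracket is tilting by induction on $r$ together with Donkin's identity $\St_0\otimes T_m^{[1]}\simeq T_{p-1+pm}$, and $L_{i_0}$ is now genuinely tilting since $i_0<p$. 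This is not the paper's route: there one reduces via Steinberg and Lemma~\ref{LemTriv} to the stronger statement that $L_m\otimes\St_{r-1}$ is tilting for all $m\le p^r-p^{r-1}$, and checks this directly by the criterion $\Ext^1(\Delta_n,L_m\otimes\St_{r-1})=0$ for all $n$, splitting into the cases $n\ge p^r-1$ (where $\Delta_n$ is projective in the relevant truncation) and $n<p^r-1$ (where one moves $L_m$ across the $\Ext$ and invokes the block separation of Lemma~\ref{LemAlc}). The paper's argument thus avoids Donkin's tilting tensor product theorem entirely.
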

\begin{proof}
By the Steinberg tensor product theorem, \cite[II.3.17]{Jantzen}, for $i<p^r$ we have
$$L_i\;\simeq\; \bigotimes_{a=0}^{r-1}L_{p^ai_a},\quad \mbox{with}\quad i=\sum_{a=0}^{r-1} p^a i_a\quad\mbox{and}\quad 0\le i_a<p.$$
By Lemma~\ref{LemTriv}, it therefore suffices to prove that $L_{p^ab}\otimes \St_{r-1}$ is a tilting module for $a<r$ and $b<p$. We prove the more general claim that $L_m\otimes \St_{r-1}$ is a tilting module for $m\le p^r-p^{r-1}.$
By \cite[Proposition~E.1]{Jantzen}, it then suffices to prove that
\begin{equation}\label{ExtVan}\Ext^1(\Delta_n,L_m\otimes\St_{r-1} )=0,\qquad\mbox{for $n\in\mN$ and $m\le p^r-p^{r-1}$},\end{equation}
where $\Delta_n$ is the Weyl module with top $L_n$. 

We divide \eqref{ExtVan} into two cases. First assume that $n\ge p^r-1$. Then $n\ge m+p^{r-1}-1$, so $L_m\otimes\St_{r-1}$ belongs to the Serre subcategory ${\Rep SL_2}^{\le n}$ generated by simples $L_j$ with $j\le n$ in which $\Delta_n$ is projective. Hence \eqref{ExtVan} is satisfied. Now assume that $n< p^r-1$. The left-hand of \eqref{ExtVan} can be rewritten as $\Ext^1(\Delta_n\otimes L_m,\St_{r-1} )$, and by our assumption
$$n+m<2p^r-p^{r-1}-1.$$
By Lemma~\ref{LemAlc}, this means that the direct summand of $\Delta_n\otimes L_m$ in the block of $\St_{r-1}$ is a direct sum of copies of $\St_{r-1}$, so the extension vanishes and \eqref{ExtVan} is again satisfied.
\end{proof}

We set $\bD=\Tilt SL_2$ and $\bC=\bD/\cJ_r$.

\begin{lemma}\label{LemSt2}
If $p>2$, the object $\St_{r-1}$ is strongly faithful in $\bC$.
\end{lemma}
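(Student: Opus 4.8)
The goal is to show that $\St_{r-1}$ is strongly faithful in $\bC=\bD/\cJ_r$, and by Proposition~\ref{Refl} it suffices to check that $\St_{r-1}$ reflects both kernels and cokernels in $\bC$ (equivalently that $\St_{r-1}\otimes\St_{r-1}$, which equals $\St_{r-1}^\vee\otimes\St_{r-1}$ since $\St_{r-1}$ is self-dual, reflects cokernels). The plan is to verify version (i) of Definition~\ref{DefSF} directly: for all $M,N\in\bC$ I would show that
$$0\to\bC(M,N)\xrightarrow{\St_{r-1}\otimes-}\bC(\St_{r-1}M,\St_{r-1}N)\xrightarrow{(\St_{r-1}\otimes-)-(s\otimes N)(\St_{r-1}\otimes-)(s\otimes M)}\bC(\St_{r-1}\St_{r-1}M,\St_{r-1}\St_{r-1}N)$$
is exact. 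Since every object of $\bC$ is a summand of a sum of objects $T_i$, and since Hom-spaces in $\bC$ are quotients of Hom-spaces in $\bD=\Tilt SL_2$, the strategy is to reduce to controlling $\St_{r-1}\otimes T_i$ and to keep careful track of which morphisms are killed by $\cJ_r$.

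\textbf{Key steps.} First I would exploit Lemma~\ref{LemSt1}: for $i\le p^r-1$, $L_i\otimes\St_{r-1}$ is tilting, and more generally $L_m\otimes\St_{r-1}$ is tilting for $m\le p^r-p^{r-1}$. Combined with the block estimate in Lemma~\ref{LemAlc}, this should let me decompose $\St_{r-1}\otimes T_i$ — via the Weyl/costandard filtration of $T_i$ and the Steinberg tensor product theorem — into a part lying in the "trivial blocks" where the functor $\St_{r-1}\otimes-$ is essentially a (twisted) equivalence onto its image, and a part landing in $\cJ_r$. Second, using $p>2$, I would show $\St_{r-1}\otimes\St_{r-1}$ contains $\St_{r-1}\otimes\St_{r-1}$-projective summands (i.e.\ summands of the form $T_j$ with $j\ge p^r-1$) exactly accounting for the "defect," so that modulo $\cJ_r$ the composite in Definition~\ref{DefSF}(i) computes the kernel correctly. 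Concretely: injectivity of the first map is faithfulness of $\St_{r-1}\otimes-$ on $\bC$, which follows because $\ev_{\St_{r-1}}$ is an epimorphism in $\bC$ (one checks $\St_{r-1}^\vee\otimes\St_{r-1}\to\unit$ is split epi modulo $\cJ_r$, or that $\unit$ is a summand of $\St_{r-1}\otimes\St_{r-1}$ in $\bD$ not killed by $\cJ_r$, using $p>2$). Exactness in the middle is the substantive point: given $\phi\in\bC(\St_{r-1}M,\St_{r-1}N)$ in the kernel of the second map, lift it to $\bD$, use the splitting $\St_{r-1}\otimes\St_{r-1}=\unit\oplus(\text{rest})$ to write $\phi=\St_{r-1}\otimes\psi$ for some $\psi$ up to a term through objects $T_j$ with $j\ge p^r-1$, and argue that term vanishes in $\bC$.

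\textbf{Main obstacle.} The hard part will be the explicit bookkeeping of $\St_{r-1}\otimes T_i$: one needs that tensoring an indecomposable tilting module $T_i$ (with $i<p^r-1$, so $T_i$ survives in $\bC$) with $\St_{r-1}$ yields, modulo summands in $\cJ_r$, exactly $\St_{r-1}\otimes(\text{lift of }T_i\text{ in the trivial block})$ with no "extra" surviving summands — this is where $p>2$ is genuinely used (the argument fails, or needs a different input, at $p=2$ because of the collapse of the linkage pattern near the wall). I expect the cleanest route is: (a) reduce via $\bD_K\simeq\bD$-type arguments and Lemma~\ref{LemFieldExt} is not needed here since $k=\bar k$ already; (b) use Lemma~\ref{LemEO}-style reasoning — $\gamma_{\St_{r-1}}$ is exact in $\bC$ iff $\bC(\gamma_{\St_{r-1}},Y)$ is exact for all $Y$, and since every $Y\in\bC$ is a summand of $\St_{r-1}\otimes Z$ for suitable $Z$ (because $\St_{r-1}$ is "large enough" — indeed $\St_{r-1}\otimes L_i$ ranges over enough tilting modules by Lemma~\ref{LemSt1}), apply Lemma~\ref{LemEO} directly. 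This last observation is likely the slick finish: \emph{if} one can show every indecomposable $T_i$ with $i<p^r-1$ is a summand of $\St_{r-1}\otimes Z$ for some $Z\in\bC$, then Lemma~\ref{LemEO} immediately gives that $\bC(\gamma_{\St_{r-1}},T_i)$ is exact for all such $i$, hence $\gamma_{\St_{r-1}}\in\Xi(\bC)$, hence $\St_{r-1}$ is strongly faithful by Definition~\ref{DefSF}(ii). So the crux reduces to: for $p>2$ and $i\le p^r-2$, find $Z\in\Tilt SL_2/\cJ_r$ with $T_i\mid \St_{r-1}\otimes Z$ — presumably $Z$ built from $\St_{r-1}\otimes T_{i}$ having $T_i$ as a summand after twisting, using $\St_{r-1}\otimes\St_{r-1}\supset\unit$ and Lemma~\ref{LemSt1}.
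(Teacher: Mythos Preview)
Your proposal contains a genuine gap. The ``slick finish'' via Lemma~\ref{LemEO} hinges on the claim that every indecomposable $T_i$ with $i<p^r-1$ is a direct summand of $\St_{r-1}\otimes Z$ for some $Z\in\bC$, but this is false. The thick tensor ideal generated by $\St_{r-1}$ in $\bD$ is precisely $\cJ_{r-1}$ (see \ref{SetupSL2}), so a tilting module $T_i$ appears as a summand of some $\St_{r-1}\otimes Z$ if and only if $i\ge p^{r-1}-1$. For $i<p^{r-1}-1$ no such $Z$ exists, and Lemma~\ref{LemEO} gives no information. Relatedly, your suggestion that $\unit$ is a direct summand of $\St_{r-1}\otimes\St_{r-1}$ is wrong once $r\ge 2$: the dimension of $\St_{r-1}$ is $p^{r-1}\equiv 0$ in $k$, so $\ev_{\St_{r-1}}$ is not split.

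This is exactly why the paper's proof splits into two ranges. For $i\ge p^{r-1}-1$ it does use Lemma~\ref{LemEO}, just as you propose. For $i<p^{r-1}-1$ it employs an entirely different mechanism: it shows that $\cJ_r(\unit,T_i)=\cJ_r(\otimes^2\St_{r-1},T_i)=\cJ_r(\otimes^4\St_{r-1},T_i)=0$, so that the three-term sequence $\bC(\gamma_{\St_{r-1}},T_i)$ coincides with $\bD(\gamma_{\St_{r-1}},T_i)$, and the latter is exact because $\bD\subset\Rep SL_2$ is a full embedding into a tensor category (Corollary~\ref{CorSF}). The vanishing of those $\cJ_r$-spaces is established by a weight/block argument via Lemma~\ref{LemAlc}: one needs $i+3(p^{r-1}-1)<2p^r-p^{r-1}-1$, and this is precisely where the hypothesis $p>2$ enters. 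Your proposal never reaches this argument, so the case $i<p^{r-1}-1$ remains unproved.
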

\begin{proof}
By definition, we need to prove that the sequence
\begin{equation}\label{seqTilt}\bC(\gamma_{\St_{r-1}}, T_i):\quad  0\to \bC(\unit, T_i)\to \bC(\otimes^2\St_{r-1}, T_i)\to \bC(\otimes^4\St_{r-1}, T_i)\end{equation}
is exact, for each $0\le i<p^r-1$.

The structure of tensor ideals recalled in~\ref{SetupSL2} implies that for $i\ge p^{r-1}-1$, the module $T_i$ is a direct summand of an object $T\otimes \St_{r-1}$.
That \eqref{seqTilt} is exact for $i\ge p^{r-1}-1$ is thus an example of Lemma~\ref{LemEO}.

Next, we consider $T_i$ with $i< p^{r-1}-1$. We claim that
$$\cJ_r(\unit,T_i)=0=\cJ_r(\otimes^2\St_{r-1}, T_i)=\cJ_r(\otimes^4\St_{r-1}, T_i).$$
That the left-most space is zero follows immediately from the description of the ideals $\cJ_l$ in \cite[\S 3.2]{Selecta}. We now prove the claim for the right-most space, the proof for the middle space is similar but easier. Note also that when $p>3$, the proof below even works for $i<p^r-1$, so in that case we do not need the previous paragraph.

By adjunction, we can equivalently prove 
$$\cJ_r(\St_{r-1}, \otimes^3\St_{r-1}\otimes T_i)=0.$$
By definition of $\cJ_r$ and Lemma~\ref{LemAlc}, the contrary would necessarily imply that 
$$[\otimes^3\St_{r-1}\otimes T_i:L_a]\not=0,\qquad\mbox{for some }\; a\ge 2p^r-p^{r-1}-1.$$
However, since we have
$$ i+3(p^{r-1}-1)\;<\; 4p^{r-1} -4\; <\;  2p^r-p^{r-1}-1,$$
under the assumption $p> 2$, this non-vanishing multiplicity is impossible.
It follows that for $i<p^{r-1}-1$ we have a commutative diagram, with the second row given by \eqref{seqTilt}:
$$\xymatrix{
0\ar[r]& \bD(\unit, T_i)\ar[r]\ar[d]^{\sim}&\bD(\otimes^2\St_{r-1}, T_i)\ar[r]\ar[d]^{\sim}& \bD(\otimes^4\St_{r-1}, T_i)\ar[d]^{\sim}\\
0\ar[r]& \bC(\unit, T_i)\ar[r]&\bC(\otimes^2\St_{r-1}, T_i)\ar[r]& \bC(\otimes^4\St_{r-1}, T_i).
}$$
The first row is exact by Corollary~\ref{CorSF} and the inclusion $\bD\subset\Rep SL_2$. Hence the second row is exact. This concludes the proof.
\end{proof}

\begin{proof}[Proof of Theorem~\ref{ThmBEO}]
Consider a morphism $f:T\to T'$ in $\bD=\Tilt SL_2$, where $T$ and $T'$ are direct sums of indecomposable tilting modules $T_i$ with $i< p^r-1$. By Lemma~\ref{LemSt1} the image, kernel and cokernel of $f$ are objects $X\in \Rep SL_2$ such that $\St_{r-1}\otimes X$ is a tilting module. Indeed, this follows from the fact that there are no first extensions between tilting modules, see \cite[\S II.E]{Jantzen}. The same fact then also shows that $\St_{r-1}\otimes f$ is split in $\bD$, see also~\cite{CEH}. It then follows trivially that $\St_{r-1}\otimes f$ is also split in $\bC=\bD/\cJ_r$. Any morphism in $\bC$ can be written as above. Hence $\St_{r-1}$ splits every morphism in $\bC$.

Since $\St_{r-1}$ is strongly faithful in $\bC$, by Lemma~\ref{LemSt2}, we can apply Theorem~\ref{Thm}(ii).
\end{proof}

\begin{remark}\label{RemG1}
Let $G$ be a simple simply-connected algebraic group. The category $\Rep G$ is self-splitting via the Steinberg modules, see \cite[\S 3.3]{CEH}. 
This thus gives an example of a self-splitting tensor category which is not a finite tensor category, and $\Rep G$ is its own abelian envelope by Corollary~\ref{CorTriv}.
\end{remark}

\begin{remark}
As proved in \cite[Theorem~3.3.1]{CEH}, in the generality of Remark~\ref{RemG1}, $\Rep G$ is the abelian envelope of $\Tilt G$. Let $\Rep^\infty G$ denote the category of all algebraic representations (which is equivalent to $\Ind\Rep G$). Our results can be used to prove that $\Rep^\infty G$ is equivalent to the category of $k$-linear functors $(\Tilt G)^{\op}\to\Vecc$ which send all sequences in $\Xi(\Tilt G)$ (or alternatively all sequences $T\otimes\gamma_{\St_n}$ for tilting modules $T$ and $n\in\mN$) to exact sequences.
\end{remark}

\subsection{A peculiar connection with tensor ideals}

 Fix a pseudo-tensor category $\bD$ over a field $k$ and assume that the {\em morphism spaces in $\bD$ are finite dimensional.}
\subsubsection{} A {\bf thick tensor ideal} in $\bD$ is a full Karoubi subcategory $J$ of $\bD$ such that $X\in J$ implies that $Y\otimes X\in J$ for all $Y\in\bD$. The {\bf decategorification map}, see \cite[\S 4.1]{Selecta}, sends a tensor ideal $\cJ$ in $\bD$ to the thick tensor ideal of objects $X$ with $\id_X\in\cJ$. By \cite[Theorem~4.1.2]{Selecta}, this map is always surjective.

\begin{prop}\label{PropIdeal}
Assume that the decategorification map is a bijection for $\bD$ and that there exists a fully faithful tensor functor $I:\bD\to\bV$ to a tensor category $\bV$, such that
any $X\in\bV$ is a quotient of an object $I(A)$, with $A\in\bD$.
Then $\bV$ is the abelian envelope of $\bD$.
\end{prop}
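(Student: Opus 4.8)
The goal is to verify the hypothesis of Corollary~\ref{CorEHS} (the recognition theorem), whose conclusion is exactly that $\bV$ is the abelian envelope of $\bD$. Condition \ref{itms:req_subcat_D2} of Corollary~\ref{CorEHS} is given to us verbatim, so the entire content is to establish condition \ref{itms:req_subcat_D3}: for every epimorphism $X\tto Y$ in $\bV$ there is a nonzero $T\in\bD$ with $X\otimes I(T)\tto Y\otimes I(T)$ split. The plan is to reduce this to a splitting statement \emph{inside} $\bD$, which is then handled using the bijectivity of the decategorification map. Throughout I will suppress $I$ and regard $\bD$ as a full pseudo-tensor subcategory of $\bV$.

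\textbf{Step 1: reduce to morphisms between objects of $\bD$.} Given an epimorphism $p\colon X\tto Y$ in $\bV$, use the generation hypothesis to pick an epimorphism $A\tto X$ with $A\in\bD$, and similarly write $Y$ as a quotient of some $B\in\bD$; lifting along $B\tto Y$ through the epimorphism $A\tto Y$ (this requires projectivity, so instead one works the other way, see below). The cleaner route: it suffices to split, for each morphism $f\colon A\to B$ in $\bD$, the induced epimorphism $A\otimes T\tto \mathrm{im}(f)\otimes T$ and monomorphism $\ker(f)\otimes T\hookrightarrow A\otimes T$ for a suitable $T$, because $\gamma$-type arguments (as in the proof of Corollary~\ref{CorEHS}) let one pass between a general epimorphism in $\bV$ and image/kernel data of morphisms in $\bD$. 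So the real target becomes: \emph{for every $f\in\bD$ there is a nonzero $T\in\bD$ with $T\otimes f$ split}, i.e.\ $\bD$ is self-splitting (condition \ref{Thm}(ii) up to the strong-faithfulness clause, which is automatic by Corollary~\ref{CorSF} since $\bD\hookrightarrow\bV$ fully faithfully into a tensor category).

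\textbf{Step 2: use the decategorification bijection to produce the splitting object.} This is the heart of the argument and the expected main obstacle. Fix $f\colon A\to B$ in $\bD$ with image $Z\in\bV$ and cokernel $W\in\bV$. Inside the tensor category $\bV$ one can find, for any prescribed object, an object that splits the relevant epimorphisms --- but only if $\bV$ has enough projectives (Lemma~\ref{LemP}), which is \emph{not} assumed. Instead, one must exploit that the decategorification map from tensor ideals of $\bD$ to thick tensor ideals of $\bD$ is a bijection. The idea: consider the tensor ideal $\cJ$ of $\bD$ consisting of all morphisms $g$ in $\bD$ whose image under $I$ \emph{factors through an object $N\in\bV$ with $N\otimes -$ exact and faithful} (equivalently, through a projective-like object of $\bV$); show $\cJ$ is a genuine tensor ideal, compute its decategorification, and use bijectivity to recognise $\cJ$ as $\langle \id_{T}\rangle$ for some $T\in\bD$ with the required splitting property, or to show $\cJ$ is all of $\bD$ forcing $\id_\unit\in\cJ$. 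Concretely one expects: the thick tensor ideal "hit" by $\cJ$ is generated by some $T$; then $\id_T\in\cJ$ says $\id_{I(T)}$ factors through a good object $N$, whence $I(T)$ is a summand of $N$, hence $I(T)\otimes-$ is exact; combined with $I(T)$ being nonzero (so faithful by Corollary~\ref{CorSF} and Proposition~\ref{PropSF}(iii)), $T$ splits every morphism in $\bD$ by the argument of Lemma~\ref{LemP}(i)$\Rightarrow$(iii) applied in $\bV$ to image/kernel. The delicate point is verifying that $\cJ$ is nonzero (so that its decategorification is a nonzero thick ideal, pulled back via the bijection to a nonzero tensor ideal containing some $\id_T$): this is where finite-dimensionality of morphism spaces and the structure of $\bV$ as a tensor category with $\bD$ generating it must be combined, likely by exhibiting at least one nonzero projective-type object of $\bV$ lying in $\bD$ --- e.g.\ take any nonzero $U\in\bV$, its injective hull $E$ in $\Ind\bV$ may not be in $\bV$, so instead argue with a minimal-length object or with $\unit$'s socle/top data, mirroring the Steinberg-object arguments of Lemma~\ref{LemSt1} and Proposition~\ref{PropNo311}.

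\textbf{Step 3: conclude.} Once $\bD$ is shown to be self-splitting (with all nonzero objects strongly faithful, automatic here), apply Theorem~\ref{Thm}(ii) to get that $\bD$ has an abelian envelope $(F,\bT)$ with $\Ind\bT\simeq\Sh\bD$; then run the final paragraph of the proof of Corollary~\ref{CorEHS} verbatim: the universal property gives an exact, faithful tensor functor $E\colon\bT\to\bV$ extending $I$; the generation hypothesis makes $E$ essentially surjective, dualising makes every object of $\bV$ also a kernel of a map in $\bD$, and taking presentations/copresentations upgrades $E$ to fully faithful, hence an equivalence. Therefore $\bV\simeq\bT$ is the abelian envelope of $\bD$. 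The main obstacle, as indicated, is Step~2: translating "decategorification is a bijection" into the existence of a single object of $\bD$ that behaves like a projective generator inside $\bV$; everything else is a reprise of arguments already in the paper.
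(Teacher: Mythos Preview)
Your overall strategy of verifying the splitting hypothesis of Corollary~\ref{CorEHS} is right, but Step~2 has a genuine gap. The tensor ideal $\cJ$ you propose --- morphisms in $\bD$ factoring through some $N\in\bV$ with $N\otimes-$ exact and faithful --- is vacuous: in any tensor category \emph{every} nonzero object has this property (rigidity gives exactness, and $N\otimes M=0$ forces $M\hookrightarrow N^\vee\otimes N\otimes M=0$), so $\cJ$ is the whole of $\bD$ and carries no information. If instead you intend $\cJ$ to consist of morphisms factoring through a \emph{projective} object of $\bV$, then the ``delicate point'' you flag becomes fatal rather than merely delicate: $\bV$ need not have any nonzero projectives (the abelian envelopes of $[\GL_t,k]$ and $[\OO_t,k]$ do not), so $\cJ$ may well be zero and the decategorification bijection tells you nothing. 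In short, your Step~2 never actually engages the hypothesis in a usable way.

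The paper's route is quite different and avoids projectives altogether. One first reduces an arbitrary epimorphism $h\colon M\tto N$ in $\bV$ to one with target $\unit$: tensor with $N^\vee$ and pull back along $\co_N\colon\unit\hookrightarrow N\otimes N^\vee$ to obtain an epimorphism onto $\unit$; a splitting of $X\otimes(-\tto\unit)$ then produces one for $X\otimes h$. Next one reduces an epimorphism $g\colon M\tto\unit$ to a nonzero morphism $f\colon D\to\unit$ in $\bD$, by choosing a cover $D\tto M$ and composing. The decategorification bijection now enters through \cite[Proposition~4.2.2 and Lemma~4.2.4]{Selecta}: together these guarantee a nonzero $X\in\bD$ whose evaluation $\ev_X\colon X^\vee\otimes X\to\unit$ factors as $X^\vee\otimes X\to D\xrightarrow{f}\unit$. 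Since $X\otimes\ev_X$ is split by the snake identities (Lemma~\ref{LemSplit}(i)), so is $X\otimes f$. This is the mechanism you are missing: the bijection is used not to locate a projective-type object, but to force every nonzero morphism $D\to\unit$ in $\bD$ to be dominated by some evaluation map.
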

\begin{proof}
By Corollary~\ref{CorEHS}, it suffices to prove that any epimorphism in $\bV$ is split by a non-zero object in $\bD$. We do this in three steps.

1: Consider a non-zero morphism $f:D\to\unit$ in $\bD$. This is automatically an epimorphism in~$\bV$. By \cite[Proposition~4.2.2]{Selecta}, $f$ is unique up to composition with endomorphisms of $D$. Furthermore, \cite[Lemma~4.2.4]{Selecta} then implies that there exists $X\in\bD$ such that $\ev_X$ is given by a composition
$$X^\vee\otimes X\;\to\; \oplus_{i=1}^n D\; \xrightarrow{(f\circ\phi_i)}\;\unit,$$
for certain $\phi_i\in \End(D)$. We can rewrite this as
$$\ev_X:\;X^\vee\otimes X\;\to \; D\; \xrightarrow{f}\;\unit.$$
By Lemma~\ref{LemSplit}(i) the morphism $X\otimes \ev_X$, and hence also $X\otimes f$ is split.

2: Consider an epimorphism $g:M\tto \unit$ in $\bV$. By assumption, there exists $D\in\bD$ such that we have an epimorphism $\pi:D\tto M$. By step 1, $X\otimes (g\circ\pi)$ is split for some non-zero $X\in\bD$ from which it follows that also $X\otimes g$ is split.

3: Finally, we consider an arbitrary epimorphism $h:M\tto N$ in $\bV$. Tensoring with $N^\vee$ and taking a pullback yields a commutative diagram
$$\xymatrix{
M\otimes N^\vee\ar@{->>}[rr]^{h\otimes N^\vee} && N\otimes N^\vee\\
(M\otimes N^\vee)\times_{(N\otimes N^\vee)}\unit\ar@{->>}[rr]\ar@{^{(}->}[u] &&\unit\ar@{^{(}->}[u]^{\co_N}.
}$$
By step 2, there exists a non-zero $X\in\bD$ which splits the epimorphism on the lower line. After applying $X\otimes-$, the diagram thus admits a diagonal morphism $X\to XMN^\vee$ which makes the upper triangle commute. It then follows that the associated morphism $XN\to XM$ ensures that $X\otimes h$ is split.
\end{proof}

\begin{remark}Let $k$ be algebraically closed.
It is proved in \cite{Selecta} that the decategorification map is a bijection for $[\GL_\normalt,k]$, $[\OO_\normalt,k]$ and $[\mathsf{S}_\normalt,k]$, when $\charr(k)=0$, and that the same is true for $\Tilt SL_2$ when $\charr(k)>0$. \end{remark}
\appendix

\section{Grothendieck topologies}\label{AppTop}
Fix a commutative ring $K$ and an essentially small $K$-linear category $\bA$ for the entire appendix. Denote by $\PSh\bA$ the category of presheaves $\bA^{\op}\to K-\Mod$.
\subsection{$K$-linear sheaves}
\subsubsection{}
For $A\in\bA$, a {\bf sieve on} $A$ is a $K$-linear subfunctor of $\bA(-,A)\in\PSh\bA$. For a sieve $R$ on $A$ and a morphism $f:B\to A$ in $\bA$, the assignent
$$\Ob \bA\to K-\Mod,\;\, C\mapsto \{g\in \bA(C,B)\,|\, f\circ g\in R(C)\},$$
yields a sieve on $B$, which we denote by $f^{-1}R$. In other words, $f^{-1}R$ is the pullback of $R\to \bA(-,A)\leftarrow \bA(-,B)$.

The following definition is taken from \cite[1.2 and 1.6]{BQ}. 
\begin{definition}\label{Def1} A $K$-linear Grothendieck topology $\cT$ on $\bA$ is an assignment to each $A\in\bA$ of a collection $\cT(A)$ of sieves on $A$ such that for every $A\in\bA$:
\begin{enumerate}
\item[(T1)] We have $\bA(-,A)\in \cT(A)$;
\item[(T2)] For $R\in \cT(A)$ and a morphism $f:B\to A$  in $\bA$, we have $f^{-1}R\in \cT(B)$;
\item[(T3)] For a sieve $S$ on $A$ and $R\in\cT(A)$ such that for every $B\in\bA$ and $f\in R(B)\subset \bA(B,A)$ we have $f^{-1}S \in \cT(B)$, it follows that $S\in\cT(A)$.
\end{enumerate}
\end{definition}

The following definition is taken from \cite[1.3 and 1.6]{BQ}.
\begin{definition}\label{Def2}
For a $K$-linear Grothendieck topology $\cT$ on $\bA$, a presheaf $F\in\PSh\bA$ is a {\bf $\cT$-sheaf} if for every $A\in\bA$ and $R\in\cT(A)$, the canonical morphism
$$F(A)\simeq \Nat(\bA(-,A),F)\to \Nat(R,F)$$
is an isomorphism. The full subcategory of $\PSh\bA$ of $\cT$-sheaves is denoted by $\Sh(\bA,\cT)$.
\end{definition}

Our interest in Grothendieck topologies derives from \cite[Theorem~1.5]{BQ}. Recall that a localisation of an abelian category is a full replete subcategory for which the inclusion functor has a left adjoint which is left exact (and hence exact).

\begin{theorem}[Borceux - Quinteiro]\label{ThmBQ}
The localisations of $\PSh\bA$ are precisely the subcategories $\Sh(\bA,\cT)$, for all Grothendieck topologies $\cT$ on $\bA$.
\end{theorem}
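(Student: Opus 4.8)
The statement is the $K$-linear analogue of the classical Gabriel correspondence between Grothendieck topologies and left exact localisations of a presheaf category (cf.\ \cite{SGA}, Exp.~IV); the plan is to prove the two containments separately, the dictionary being that a sieve $R\subset\bA(-,A)$ is covering precisely when the reflection functor inverts the monomorphism $R\hookrightarrow\bA(-,A)$, equivalently kills the quotient $\bA(-,A)/R$. Throughout one uses that $\PSh\bA$ is a Grothendieck category in which the representables form a set of \emph{projective} generators (by the enriched Yoneda lemma $\Hom(\bA(-,A),-)$ is the exact evaluation functor at $A$).

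\textbf{Topologies give localisations.} Given a $K$-linear Grothendieck topology $\cT$, the sheaf condition $F(A)\xrightarrow{\sim}\Nat(R,F)$ is a limit condition, so $\Sh(\bA,\cT)$ is closed under all limits in $\PSh\bA$; being also accessible, the inclusion admits a left adjoint by Freyd's special adjoint functor theorem. To see this reflection is left exact I would construct it via the plus construction $F^+(A):=\colim_{R\in\cT(A)}\Nat(R,F)$, the colimit being filtered because $\cT(A)$ is stable under intersections (immediate from (T2) and (T3)), hence directed under reverse inclusion. Then $F\mapsto F^+$ is computed pointwise as a filtered colimit of the left exact functors $\Nat(R,-)$, hence is itself left exact (filtered colimits are exact in $K$-$\Mod$); a standard argument shows $F^+$ is separated, $F^{++}$ is a $\cT$-sheaf, and $F\mapsto F^{++}$ is left adjoint to the inclusion. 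So $\Sh(\bA,\cT)$ is a localisation.

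\textbf{Localisations give topologies.} Conversely, let $a\dashv i$ exhibit a full replete subcategory $\mathcal L\subset\PSh\bA$ as a localisation, $a$ exact. Put $\cT(A):=\{R\subset\bA(-,A):a(\bA(-,A)/R)=0\}$. Axiom (T1) is trivial. For (T2), $f^{-1}R$ is the pullback $R\times_{\bA(-,A)}\bA(-,B)$, whence $\bA(-,B)/f^{-1}R$ is a subobject of $\bA(-,A)/R$; since $a$ is left exact this gives $f^{-1}R\in\cT(B)$. For (T3) I would isolate the lemma: if a subobject $P'\subset P$ is covered by the images of maps $u_i\colon P_i\to P$ and $a$ inverts each $u_i^{-1}(P')\hookrightarrow P_i$, then $a$ inverts $P'\hookrightarrow P$ — because $a$ is right exact and preserves coproducts, so the epimorphism $a(P)\twoheadrightarrow a(P/P')$ factors through $\bigoplus a(P_i/u_i^{-1}P')=0$. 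Applying this with $P=R$, $P'=S\cap R$, and the $u_i$ the elements of $R$ (for which $u_i^{-1}(S\cap R)=u_i^{-1}S$ is covering by hypothesis) gives $a(R/(S\cap R))=0$; combining with $a(\bA(-,A)/R)=0$ and right exactness of $a$ along $0\to R/(S\cap R)\to\bA(-,A)/S\to\bA(-,A)/(S+R)\to 0$ yields $a(\bA(-,A)/S)=0$, i.e.\ $S\in\cT(A)$. Finally, $F$ is a $\cT$-sheaf iff $\Hom(\bA(-,A)/R,F)=0=\Ext^1(\bA(-,A)/R,F)$ for all covering $R$ (using projectivity of $\bA(-,A)$), which is exactly the condition that $F$ be right-orthogonal to the localizing subcategory $\mathcal T=\{X:aX=0\}$, once one checks $\mathcal T$ is generated as a localizing subcategory by the quotients $\bA(-,A)/R$, $R\in\cT(A)$. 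Hence $\mathcal L=\Sh(\bA,\cT)$.

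\textbf{Main obstacle.} The two delicate points are (a) the left exactness of sheafification — in the enriched setting one must run the plus construction with some care (and, if one wants to be strict, iterate it transfinitely and verify stabilisation) — and (b) the identification of the $a$-torsion subcategory $\mathcal T$ with the localizing subcategory generated by the $\bA(-,A)/R$: here I would write an arbitrary $X\in\mathcal T$ as a filtered colimit of its finitely generated subobjects, present each such by a finite direct sum of representables, and peel off one representable at a time, using that subobjects of $a$-torsion objects are again $a$-torsion (left exactness of $a$) together with closure of localizing subcategories under subquotients, extensions and filtered colimits. Everything else is diagram chasing in the Grothendieck category $\PSh\bA$.
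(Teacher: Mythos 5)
The paper does not supply a proof of this statement: it is quoted verbatim from Borceux--Quinteiro \cite[Theorem~1.5]{BQ}, and the paper simply uses it as a black box to deduce part (ii) of Theorem~\ref{ThmGro} from part (i). So there is no argument in the text for your proposal to be measured against; what one can do is assess your sketch on its own terms. It is, in outline, the standard Gabriel-style correspondence and is broadly correct. Both directions are set up properly: the plus construction (with the filteredness of $\cT(A)$ under reverse inclusion coming from closure under intersection, which you correctly derive from (T2) and (T3)), and the passage from a left exact reflection $a$ to the Gabriel filter $\cT(A)=\{R:a(\bA(-,A)/R)=0\}$, with the verifications of (T1)--(T3) as you give them. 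In particular your lemma proving (T3) via right exactness and cocontinuity of $a$, combined with the short exact sequence $0\to R/(S\cap R)\to\bA(-,A)/S\to\bA(-,A)/(R+S)\to 0$, is a clean way to organize the argument.

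The one place where the logic does not quite close is the final identification $\mathcal{L}=\Sh(\bA,\cT)$. You reduce it to the claim that $\mathcal{T}=\ker a$ is generated as a localizing subcategory by the objects $\bA(-,A)/R$, $R\in\cT(A)$, and your peeling argument for that generation is fine. But generation alone does not let you transfer right-orthogonality (in the sense $\Hom(-,F)=0=\Ext^1(-,F)$) from the generators to all of $\mathcal{T}$: the class $\{X:\Hom(X,F)=0=\Ext^1(X,F)\}$ is closed under extensions and coproducts, but not obviously under subobjects (the long exact sequence only gives $\Hom(X',F)\hookrightarrow\Ext^1(X/X',F)$), and these are among the closure operations that define a localizing subcategory. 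The clean way to finish, which avoids the generation claim entirely, is the direct torsion-theoretic argument: show $F$ is $\mathcal{T}$-torsion-free by pulling a nonzero $X\to F$ back along some $\bA(-,B)\to X$ and using the sheaf condition on the resulting covering sieve $\ker(\bA(-,B)\to X)$; then show $F$ is $\mathcal{T}$-injective by, for a monomorphism $A'\hookrightarrow A$ with $A/A'\in\mathcal{T}$ and a map $A'\to F$, choosing a surjection $\bigoplus\bA(-,B_i)\twoheadrightarrow A$, pulling $A'$ back to covering sieves $R_i\subset\bA(-,B_i)$, lifting along each $R_i\hookrightarrow\bA(-,B_i)$ via the sheaf condition, and checking the lifts glue and kill the kernel of the surjection. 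This is the argument that actually appears in Gabriel/Stenstr\"om and, in the enriched form, in \cite{BQ}. With that replacement the proposal is sound; the rest of your outline does not need repair.
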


\subsection{Sch\"appi's formalism}\label{AppSch}

We start by recalling a definition from \cite{Sch2}.

\begin{definition}\label{DefInd}
For a class $\Sigma\subset\Xi(\bA)$ of exact sequences 
\eqref{rexeq},
denote by $\Co(\Sigma)$ the set of morphisms $q$ which appear as the cokernels in sequences in $\Sigma$. Then $\Sigma$ is an {\bf ind-class} if
\begin{enumerate}[label=(\roman*)]
\item For every $q\in\Co(\Sigma)$, there is a sequence $X_1\stackrel{q}{\to} X_0\to Z\stackrel{\sim}{\to} 0 $ in $\Sigma$.
\item For each sequence \eqref{rexeq} in $\Sigma$ and each morphism $f:A\to X_1$ in $\bA$ with $q\circ f=0$, there exists $p':B\to A$ in $ \Co(\Sigma)$ and $f':B\to X_2$ in $\bA$ yielding a commutative diagram
$$\xymatrix{
X_2\ar[r]^p& X_1\ar[r]^q& X_0\ar[r]&0\\
B\ar[r]^{p'}\ar[u]^{f'}&A\ar[u]^f\ar@{-}[ru]_{0}.
}$$
\end{enumerate}
\end{definition}

The following proposition follows immediately from \cite[A.1.2 and A.2.3]{Sch2}.
\begin{prop}[Sch\"appi]\label{LemSch}
Consider a subclass $\Sigma\subset\Xi(\bA)$. For each $A\in\bA$, denote by $\cT(A)$ the set of all sieves $R\subset \bA(-,A)$ which contain a composite $r=r_1\circ r_2\circ\cdots\circ r_m$ (with $m\in\mN$, where the empty composite is interpreted as $\id_A$) of morphisms $r_i\in\Co(\Sigma)$. 

If $\Sigma$ is an ind-class, then $\{A\mapsto\cT(A)\}$ is a $K$-linear Grothendieck topology on $\bA$.
\end{prop}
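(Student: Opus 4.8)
The goal is to verify that $\{A\mapsto\cT(A)\}$ satisfies the three axioms (T1)--(T3) of Definition~\ref{Def1}, given that $\Sigma$ is an ind-class. The plan is to treat each axiom in turn, with (T3) being the substantive one. For (T1), the empty composite convention immediately gives $\bA(-,A)\in\cT(A)$, since $\id_A$ is (vacuously) a length-$0$ composite of morphisms in $\Co(\Sigma)$. For (T2), suppose $R\in\cT(A)$ contains a composite $r=r_1\circ\cdots\circ r_m$ with each $r_i\in\Co(\Sigma)$, and let $f:B\to A$ be arbitrary. I would show by induction on $m$ that $f^{-1}R$ contains a composite of morphisms in $\Co(\Sigma)$: each time we pull back along $f$, property (i) of the ind-class lets us replace the target with a genuine sequence in $\Sigma$, and then property (ii) produces a pullback-type morphism $p'\in\Co(\Sigma)$ through which the relevant component factors after composing with the appropriate $r_i$. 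Iterating this down the chain $r_1,\dots,r_m$ assembles the required composite landing in $f^{-1}R$.

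**The main step.** The crux is (T3). Suppose $R\in\cT(A)$ contains a composite $r=r_1\circ\cdots\circ r_m$ of morphisms in $\Co(\Sigma)$, and $S$ is a sieve on $A$ with $f^{-1}S\in\cT(B)$ for every $f\in R(B)$. I want to produce a composite of $\Co(\Sigma)$-morphisms landing in $S$. The idea is to apply the hypothesis to $f:=r$ itself (which lies in $R$): then $r^{-1}S\in\cT(\mathrm{dom}(r))$, so $r^{-1}S$ contains some composite $s=s_1\circ\cdots\circ s_\ell$ of morphisms in $\Co(\Sigma)$. By definition of the pullback sieve, $r\circ s\in S$. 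But $r\circ s = r_1\circ\cdots\circ r_m\circ s_1\circ\cdots\circ s_\ell$ is itself a composite of morphisms in $\Co(\Sigma)$ (here I should be mildly careful that $\Co(\Sigma)$ need not literally be closed under composition, but the \emph{sieve} $\cT(A)$ is defined via membership of \emph{some} such composite, so having a single such composite inside $S$ is exactly what is required). Hence $S\in\cT(A)$. I expect the main obstacle to be bookkeeping: matching the domains/codomains carefully in the iterated pullbacks for (T2), and confirming that "contains a composite in $\Co(\Sigma)$" is genuinely a sieve (closed under precomposition with arbitrary morphisms) — but this last point is again handled by the ind-class property (ii), which lets us absorb an arbitrary precomposition into a new $\Co(\Sigma)$-morphism after suitable modification.

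**Remark on proof source.** Since the proposition is asserted to follow immediately from \cite[A.1.2 and A.2.3]{Sch2}, the cleanest writeup is simply to recall the statements of those two results from \cite{Sch2} and observe that an ind-class in the sense of Definition~\ref{DefInd} is exactly the input those results require, so that the generated topology coincides with the $\cT$ described here. I would therefore present the argument above as a sketch of the underlying mechanism and defer the details to \cite{Sch2}, keeping the proof short and pointing to \emph{loc.\ cit.}
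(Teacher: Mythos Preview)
Your proposal is correct, and your final remark is exactly what the paper does: it gives no argument of its own and simply cites \cite[A.1.2 and A.2.3]{Sch2}. Your sketch of (T1)--(T3) is a faithful unpacking of that citation; in particular, your use of ind-class property~(i) to produce the sequence $X_1\xrightarrow{q}X_0\to 0\to 0$ in $\Sigma$, so that property~(ii) can be applied to an \emph{arbitrary} $f:B\to X_0$ (the vanishing hypothesis being automatic), is the key mechanism behind (T2), and your concatenation argument for (T3) is correct as stated.

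One small point: the parenthetical about checking that ``contains a composite in $\Co(\Sigma)$'' is ``genuinely a sieve'' is a non-issue and should be dropped. The elements of $\cT(A)$ are sieves by fiat; nothing about closure under precomposition needs to be verified, and ind-class property~(ii) plays no role there.
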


The following proposition follows from the combination of \cite[A.1.4 and A.2.5]{Sch2}.
\begin{prop}[Sch\"appi]\label{LemSch2}
For the topology $\cT$ associated to an ind-class $\Sigma\subset\Xi(\bA)$ as in Lemma~\ref{LemSch} and $F\in\PSh\bA$, the following are equivalent
\begin{enumerate}[label=(\roman*)]
\item $F$ is a $\cT$-sheaf;
\item The sequence $F(\xi)$ is exact in $K-\Mod$ for each $\xi\in\Sigma$.
\end{enumerate}
\end{prop}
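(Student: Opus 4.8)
The plan is to reduce both implications to one module computation for a single cokernel morphism, and then to invoke the standard relationship between a Grothendieck topology and its sheaves for the remaining bookkeeping. The only genuinely categorical input will be the two ind-class axioms of Definition~\ref{DefInd}; everything else is a Yoneda manipulation plus formalities.

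\emph{The core computation.} Fix $r\in\Co(\Sigma)$ together with a chosen sequence $X_2\xrightarrow{p}X_1\xrightarrow{r}X_0\to 0$ in $\Sigma$ having cokernel $r$, and let $R_r\subseteq\bA(-,X_0)$ be the sieve generated by $r$. Since $R_r$ is the image of $\bA(-,X_1)\xrightarrow{r\circ-}\bA(-,X_0)$ in the abelian category $\PSh\bA$, with kernel the subpresheaf $C\mapsto\{t\in\bA(C,X_1):rt=0\}$, applying $\Nat(-,F)$ yields a natural identification
\[
\Nat(R_r,F)\;=\;\{\,a\in F(X_1)\ :\ F(t)(a)=0\ \text{whenever}\ rt=0\,\}\ \subseteq\ F(X_1),
\]
under which the canonical comparison $F(X_0)\to\Nat(R_r,F)$ is precisely $F(r)$ and the displayed inclusion is $\phi\mapsto\phi_{X_1}(r)$. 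I would then show this set equals $\ker F(p)$: one inclusion is immediate from $rp=0$, and for the other, given $a\in\ker F(p)$ and $t$ with $rt=0$, Definition~\ref{DefInd}(ii) supplies $p'\in\Co(\Sigma)$ and $f'$ with $p\circ f'=t\circ p'$, whence $F(p')\bigl(F(t)(a)\bigr)=F(f')\bigl(F(p)(a)\bigr)=0$, so it suffices to know $F(p')$ is injective. Injectivity of $F(q')$ for every $q'\in\Co(\Sigma)$ holds under either hypothesis of the proposition: if $F$ is exact on $\Sigma$, then Definition~\ref{DefInd}(i) places $q'$ as the left-hand map of a sequence $B'\xrightarrow{q'}C'\to Z\to 0$ in $\Sigma$ with $Z\simeq 0$, and exactness of $F$ there together with $F(Z)=0$ forces $F(q')$ injective; if instead $F$ is a $\cT$-sheaf, then $R_{q'}$ is a covering sieve and the composite $F(C')\xrightarrow{\sim}\Nat(R_{q'},F)\hookrightarrow F(B')$ equals $F(q')$.

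\emph{Both implications from the core computation.} If $F$ is a $\cT$-sheaf, then for each $\xi=(X_2\xrightarrow{p}X_1\xrightarrow{q}X_0)$ in $\Sigma$ the sieve $R_q$ is covering, so $F(X_0)\xrightarrow{\sim}\Nat(R_q,F)=\ker F(p)$ via $F(q)$; hence $F(q)$ is a monomorphism with image $\ker F(p)$, i.e. $F(\xi)$ is exact, giving (i)$\Rightarrow$(ii). Conversely, if $F$ is exact on $\Sigma$, then for every $r\in\Co(\Sigma)$ the core computation, together with injectivity of $F(r)$ and the identity $\mathrm{im}\,F(r)=\ker F(p)$ (both contained in the exactness of the sequence with cokernel $r$), shows that $F(X_0)\xrightarrow{F(r)}\Nat(R_r,F)$ is an isomorphism; that is, $F$ satisfies the sheaf condition for every sieve generated by a single morphism of $\Co(\Sigma)$.

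\emph{From basic covers to all covers, and the main obstacle.} What remains for (ii)$\Rightarrow$(i) is to upgrade ``sheaf for every $R_r$, $r\in\Co(\Sigma)$'' to ``sheaf for every $\cT$-covering sieve''. Every covering sieve contains one of the form $R_{r_1\circ\cdots\circ r_m}$ with $r_i\in\Co(\Sigma)$; one first passes from single generators to these composites by induction on $m$ using transitivity of covers, and then from a covering subsieve $R'\subseteq R$ to $R$ itself by the usual local-character argument (a presheaf that is a sheaf for $R'$ and is separated for every pullback $f^{-1}R'$ is automatically a sheaf for $R$), using that pullbacks of basic covering sieves are again covering — which is exactly what the proof of Lemma~\ref{LemSch} establishes. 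This final step is soft but genuinely fiddly: it is the pure Grothendieck-topology bookkeeping packaged once and for all in \cite[A.1.4]{Sch2}, and it is the $K$-linear analogue of the sheaf characterisation underlying Theorem~\ref{ThmBQ}. It is the only place the argument is not a two-line computation, so I would either cite it or carry out the induction explicitly; the substance of the proposition is entirely the core computation and the two ind-class axioms.
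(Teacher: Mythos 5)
Your argument is correct, and it is genuinely more self-contained than what the paper does: the paper's ``proof'' of Proposition~\ref{LemSch2} is a one-line citation of \cite[A.1.4 and A.2.5]{Sch2}, whereas you actually supply the content of those results. Your core computation is exactly right: identifying $\Nat(R_r,F)$ with $\{a\in F(X_1): F(t)(a)=0 \text{ whenever } rt=0\}$ via the presheaf exact sequence $0\to K\to\bA(-,X_1)\to R_r\to 0$, showing this equals $\ker F(p)$ using Definition~\ref{DefInd}(ii), and observing that the needed injectivity of $F(q')$ for $q'\in\Co(\Sigma)$ is available under \emph{either} hypothesis (via Definition~\ref{DefInd}(i) on one side, via the sheaf condition for the singly generated covering sieve $R_{q'}$ on the other) is precisely where the two ind-class axioms enter, and both implications then fall out as you describe. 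The one step you defer --- passing from the sheaf condition for the basic sieves $R_r$ to arbitrary covering sieves --- is the same bookkeeping the paper outsources to \cite[A.1.4]{Sch2}, so citing it puts you on equal footing with the paper; if you do carry out the induction yourself, note that a pullback $f^{-1}R_s$ of a composite-generated sieve need not itself be composite-generated, so the induction must be run through separatedness (which passes up from covering subsieves, as you indicate) rather than naively on the generators. In short: same ultimate source for the formal topology argument, but your write-up makes explicit the substantive module computation that the paper leaves entirely to the reference.
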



\subsection*{Acknowledgement}
The research was supported by ARC grant DP180102563. The author thanks Inna Entova, Pavel Etingof, Thorsten Heidersdorf, Bregje Pauwels and Victor Ostrik for interesting discussions and in particular Pavel Etingof and Victor Ostrik for suggesting improvements of Lemma~\ref{LemSt2} allowing to include the case $p=3$.

\end{document}